\newtheorem{theorem}{Theorem}[section]
\newtheorem{lemma}[theorem]{Lemma}
\newtheorem{corollary}[theorem]{Corollary}
\theoremstyle{definition}
\newtheorem{example}[theorem]{Example}
\theoremstyle{remark}
\newtheorem{remark}[theorem]{Remark}
\numberwithin{equation}{section}
\newcommand{\cC}{\mathcal{C}}
\newcommand{\cD}{\mathcal{D}}
\newcommand{\iD}{\mathit{\varDelta}}
\newcommand{\Id}{I}
\newcommand{\R}{\mathbb{R}}
\newcommand{\bs}{\mathbf{s}}
\newcommand{\cX}{\mathcal{X}}
\newcommand{\Z}{\mathbb{Z}}
\begin{document}

\title[On a Singular Perturbation of the Navier-Stokes Equations]
      {On a Singular Perturbation \\ of the Navier-Stokes Equations}

\author[A. Shlapunov]{Alexander Shlapunov}

\address{Siberian Federal University,
         Institute of Mathematics and Computer Science,
         pr. Svobodnyi 79,
         660041 Krasnoyarsk,
         Russia}

\email{ashlapunov@sfu-kras.ru}


\author[N. Tarkhanov]{Nikolai Tarkhanov}

\address{Institute of Mathematics,
         University of Potsdam,
         Karl-Liebknecht-Str. 24/25,
         14476 Potsdam OT Golm,
         Germany}

\email{tarkhanov@math.uni-potsdam.de}


\date{February 23, 2018}


\subjclass [2010] {Primary 76D05; Secondary 76N10, 35Q30}

\keywords{Navier-Stokes equations,
          Bochner spaces,
          singular perturbation}

\begin{abstract}
The paper is aimed at analysing a singular perturbation of the Navier-Stokes
equations on a compact closed manifold.
The case of compact smooth manifolds with boundary under the Dirichlet conditions
is also included.
Global existence and uniqueness is established for the weak solutions of the Cauchy
problem.
The solution of the regularised system is shown to converge to the solution of the
conventional Navier-Stokes equations provided it is uniformly bounded in parameter.
\end{abstract}

\maketitle

\tableofcontents

\section*{Introduction}
\label{s.Introdu}

The classical Navier-Stokes equations for the incompressible viscous fluid in a
domain $\cX$ of $\R^n$ have the form
\begin{equation}
\label{eq.cNSefivf}
\begin{array}{rcl}
   \partial_t v - \nu \iD v + N (v) + \mathrm{grad}\, p
 & =
 & f,
\\
   \mathrm{div}\, v
 & =
 & 0
 \end{array}
\end{equation}
for $(x,t) \in \cX \times I$, where $I = (0,T)$ is finite or infinite time interval.
Here,
   $v$ stands for the velocity vector of the flow,
   $p$ for the pressure and
   $f$ for the outer force field,
all of them being functions of $(x,t)$.
The quantity $\nu$ is the so-called kinematic viscosity of the fluid and $\nu$ is
assumed to be a positive constant.
The resistance of system (\ref{eq.cNSefivf}) to mathematical study is caused by the
nonlinear term $N (v) = v'_x v$, where $v'_x$ is the Jacobi matrix of the mapping
$\cX \to \R^n$ given by $x \mapsto v (x,t)$ at any instant $t \in I$.
Being a bidifferential operator, $N$ fails to be Lipschitz continuous on unbounded
subsets of fit function spaces.
There is a huge literature devoted to the Navier-Stokes equations, for which we refer
the reader to
   \cite{Tema79},
   \cite{Lady03} and
   \cite{Tao16}.

As the Navier-Stokes equations are nonlinear, one needs a specific trick to introduce
the concept of a weak solution for them.
To wit, a locally square integrable vector $v$ on $\cX \times I$ is called a weak
solution of (\ref{eq.cNSefivf}) if it is weakly divergence-free and if
\begin{equation}
\label{eq.weaksol}
   \int \!\!\!\! \int_{\cX \times I}
   \left( - (v, \partial_t w)_{x,t} - \nu\, (v, \iD w)_{x,t} - (v, w'_x v)_{x,t}
   \right)
   dx dt
 = \int \!\!\!\! \int_{\cX \times I}
   (f,w)_{x,t}
   dx dt
\end{equation}
for all smooth vector fields $w$ with compact support in $\cX \times I$ satisfying
   $\mathrm{div}\, w = 0$.
This definition is due to \cite{Hopf51} who
   specified it for solutions of the first mixed problem for the Navier-Stokes system
   in the cylinder $\cX \times I$ with initial data at $t = 0$ and zero Dirichlet data
   on the lateral surface $\partial \cX \times I$
and
   established the existence of a weak solution to the first mixed problem with finite
   energy norm.
In
   \cite{Lady70},
   \cite{Lion69} and
   \cite{Tema79}
one uses a slightly different concept of a weak solution which refines that of \cite{Hopf51}
in the case where the weak solution belongs to
   $H^1 (I, L^2 (\cX,\R^n)) \cap L^2 (I, H^1_0 (\cX,\R^n))$.
It does not apply to define weak solutions in $L^2_{\mathrm{loc}} (\cX \times I,\R^n)$.
Note that, under finite energy norm, the uniqueness property for the classical Navier-Stokes
equations takes place first in
   $L^r (I, L^q (\cX,\R^n))$
with $q > n$ and $2/r + n/q \leq 1$,
   see \textit{ibid}.

Suppose $f = 0$.
Then a direct calculation shows that $v = c (t) \mathrm{grad}\, h (x)$ is a solution of
(\ref{eq.weaksol}), provided only that $c$ is a square integrable function and $h$ is
harmonic.
The vector field $v$ is infinitely differentiable so far as the space variables are
concerned but need not possess any time derivatives whatever.
The conclusion which we draw from this example is that the order of time differentiability
of a weak solution is intimately tied to the amount of time regularity which is originally
assumed.
On the other hand, quite modest assumptions concerning the spatial regularity of a weak
solution are enough to guarantee $C^\infty$ with respect to the space variables,
   see \cite{Serr62}.

As usual the velosity $v$ is subject to an initial condition $v = v_0$ on the lower base of
$\cX \times I$ and a Dirichlet condition $v = v_l$ on the lateral boundary of the cylinder.
If one wants to work in a framework where the boundary conditions are understood in the sense
of traces, and not just weakly, then one can choose an $L^q$ setting and require that the
solution $v$ belongs to the space
   $W^{1,q} (I, L^q (\cX,\R^n)) \cap L^q (I, W^{2,q} (\cX,\R^n))$
for some $q \in (1,\infty)$.
In order to obtain the optimal spaces for the data, one needs sharp trace results for the
space.
It is known that the trace $v_0$ of $v$ at $t = 0$ belongs to
   $W^{2 - 2/q,q} (\cX,\R^n)$
and the restriction $v_l$ of $v$ to the lateral boundary belongs to the anisotropic space
   $W^{1-1/2q,q} (I, L^q (\partial \cX,\R^n)) \cap L^q (I, W^{2-1/q,q} (\partial \cX,\R^n))$,
cf. \cite{Slob58}.
Observe that the restriction to the lateral boundary still retains some time regularity.
Moreover, one has
   $f \in L^q (I, L^q (\cX,\R^n))$,
the latter space being $L^q (\cX \times I,\R^n)$, and the compatibility condition $v_0 = v_l$
should hold at $t = 0$, if $v_l$ has a trace at $t = 0$, which happens for $q > 3$.
One often takes a large $q$ to simplify the treatment of the nonlinearity.
Thus, the norm of $W^{2 - 2/q} (\cX,\R^n)$ is far away from the norms one can control by the
usual a priori estimates, such as the norms of $H^1$ or $L^\infty$.

Since \cite{Hopf51} much efforts were made to prove the uniqueness and regularity of the
weak solution to the first mixed problem for the Navier-Stokes equations.
However, there has been no breakthrough unless $n = 2$, in which case the results are due
to \cite{Lera34a,Lera34b} and independently \cite{Lady70}.
In \cite{Lady03} one finds certain indirect evidence to the fact that the weak solution of
\cite{Hopf51} might be not unique, and so fail to be regular.
The paper \cite{Tao16} establishes a finite time blowup for an averaged three-dimensional
Navier-Stokes equation.
Our starting point in \cite{ShlaTark16} is to give the Navier-Stokes equations a regular
domain to have got the uniqueness from the very beginning.
We proved in \cite{ShlaTark16} that the Cauchy problem for (\ref{eq.cNSefivf}) in all of
$\R^n$ is stable in weighted H\"{o}lder spaces, i.e., the corresponding mapping is open.
On the other hand, the proof of the range closedness requires hard a priori estimates which
we have not been able to prove.
They seem to be closely tied to the very specific form of the nonlinear term $v'_x v$ in
(\ref{eq.cNSefivf}).
As mentioned, on replacing $v'_x v$ away from a ball of any radius by a mapping which is
Lipschitz continuous we immediately get existence.
To understand to what extent it is physically meaningful to perturb the nonlinear term we
introduced in \cite{MeraShlaTark15} generalised Navier-Stokes equations related to elliptic
complexes.

The first paper to consider the Navier-Stokes equations on Riemannian manifolds is the
classical paper \cite{EbinMars70}.
There, the authors employ the Hodge Laplacian but add a note in the proof emphasizing
that the ``correct'' Laplacian for Navier-Stokes should be constructed from the Bochner
Laplacian, referring to \cite{Serr59}.
This point is reiterated by \cite{Tayl10} and taken up again in
   \cite{ChanCzub13} and
   \cite{Lich16},
the last papers dealing precisely with the issue of non-uniqueness for the Navier-Stokes
equations on manifolds.

In this paper we analyse a regularised form of the conventional Navier-Stokes equations.
More precisely, we consider the first mixed problem for solutions of the system
\begin{equation}
\label{eq.gNSefivf}
\begin{array}{rcl}
   \partial_t v + \varepsilon (- \iD)^m v - \nu \iD v + N (v) + \mathrm{grad}\, p
 & =
 & f,
\\
   \mathrm{div}\, v
 & =
 & 0
 \end{array}
\end{equation}
in the cylinder $\cX \times I$, where
   $m$ is a natural number satisfying $2 m - 1 \geq n/2$
and
   $\varepsilon$ a small positive parameter.
Given any data
   $f \in L^2 (\cX \times I,\R^n)$ and
   $v_0 \in H^m_0 (\cX,\R^n)$,
we show the existence of a weak solution
   $v \in L^\infty (I, L^2 (\cX,\R^n)) \cap L^2 (I, H^m_0 (\cX,\R^n))$
to the problem with zero data on the lateral surface of the cylinder.
Moreover, the solution fulfills an energy estimate, it is unique and inherits the regularity
from $f$ and $u_0$.

If the solution $u$ is bounded in $L^2 (I, H^m_0 (\cX,\R^n))$ uniformly in $\varepsilon$,
then it approaches the unique weak solution to the conventional problem in the norm of
   $L^\infty (I,L^2 (\cX,\R^n)) \cap L^2 (I,H^1_0 (\cX,\R^n))$.
The conclusion which we might draw from these results is that system (\ref{eq.gNSefivf})
with $m \geq (n+2)/4$ is an adequate regularisation of the equations of dynamics of
incompressible viscous fluid in an $n\,$-dimensional space.
We focus on the case where $\cX$ is a compact $C^\infty$ manifold without boundary.
The case of compact smooth manifolds with boundary under the zero Dirichlet conditions is
also included.

\part{The generalised Navier-Stokes equations}
\label{p.tgNSequ}

\section{The Navier-Stokes equations for elliptic complexes}
\label{s.tNSefec}

Let $\cX$ be a compact closed Riemannian manifold of dimension $n$.
Consider an elliptic complex
\begin{equation}
\label{eq.ellcomp}
   0
 \longrightarrow
   C^\infty (\cX,F^0)
 \stackrel{A^0}{\longrightarrow}
   C^\infty (\cX,F^1)
 \stackrel{A^1}{\longrightarrow}
   \ldots
 \stackrel{A^{N-1}}{\longrightarrow}
   C^\infty (\cX,F^N)
 \longrightarrow
   0
\end{equation}
of first order differential operators $A^i$ on $\cX$ acting between sections of smooth
vector bundles $F^i$ over $\cX$.
As usual, we write $A$ for the graded operator associated with sequence (\ref{eq.ellcomp}),
which is defined by $Au = A^i u$ for sections $u$ of $F^i$.
The operator $A$ is called the differential of complex (\ref{eq.ellcomp}) and its square
$A^2$ vanishes, for the composition of any two neighbouring operators in (\ref{eq.ellcomp})
is zero by the definition of a complex.

\begin{remark}
\label{r.copsdo}
Slight changes in the proofs actually show that the theory still applies to complexes of
pseudodifferential operators.
But it is not our purpose to develop this point here.
\end{remark}

Choosing a volume form $dx$ on $\cX$ and a Riemannian metric $(\cdot,\cdot)_x$ in the fibres
of $F^i$, we equip each bundle $F^i$ with a smooth bundle homomorphism
   $\ast : F^i \to F^i{}^\ast$
defined by
   $\langle \ast u, v \rangle = (v,u)_x$ for $u, v \in F^i_x$,
and the space $C^\infty (\cX,F^i)$ with the unitary structure
$$
   (u,v) = \int_{\cX} (u,v)_x dx
$$
giving rise to the Hilbert space $L^2 (\cX,F^i)$.
Write $A^i{}^\ast$ for the formal adjoint of $A^i$ and $A^\ast$ for the corresponding graded
operator given by
   $A^\ast u = A^{i-1}{}^\ast u$
for sections $u$ of $F^i$.
By analogy with the terminology used in hydrodynamics,
   we call divergence-free those sections $u$ of $F^i$ which satisfy $A^\ast u = 0$,
   by the vorticity of $u$ is meant the section $Au$,
and
   any section $\pi$ of $F^{i-1}$ satisfying $A \pi = u$ is said to be a potential of $u$.

The ellipticity of complex (\ref{eq.ellcomp}) proves to be equivalent to saying that the
Laplacian
   $\iD = A^\ast A + A A^\ast$
is an elliptic pseudodifferential operator of order two on sections of $F^i$, for every
$i = 0, 1, \ldots, N$, see \cite{Tark95}.
The Laplacian $\iD$ is nonnegative on $C^\infty (\cX,F^i)$, for
   $(\iD u, u) = \| Au \|^2 + \| A^\ast u \|^2$
for all sections $u \in C^\infty (\cX,F^i)$.
In the sequel we tacitly assume that $i$ is fixed, thus restricting our attention to complex
(\ref{eq.ellcomp}) at step $i$.

Let $m$ be a nonnegative integer.
Our standing assumption on $m$ will be that $m \geq (n+2)/4$.
The system
\begin{equation}
\label{eq.mhdefivf}
\begin{array}{rcl}
   \partial_t v + \varepsilon \iD^m v + \nu \iD v + N (v) + Ap
 & =
 & f,
\\
   A^\ast v
 & =
 & 0
 \end{array}
\end{equation}
in the cylinder $\cC = \cX \times I$ for the unknown sections $v$ and $p$ of $F^i$ and $F^{i-1}$,
respectively, is an obvious reformulation of (\ref{eq.gNSefivf}) within the framework of elliptic
complexes.
If (\ref{eq.ellcomp}) is the de Rham complex on $\cX$ and $i = 1$, then (\ref{eq.mhdefivf}) just
amounts to (\ref{eq.gNSefivf}) up to the duality between one-forms and vector fields on $\cX$.
In the general case, by $N$ is meant any nonlinear cochain mapping of complex (\ref{eq.ellcomp})
given by a sequence of first order bidifferential operators $N^i (u,v)$ on $\cX$, and $N^i (u)$
stands for $N^i (u,u)$.

\begin{remark}
\label{eq.mietn/2}
One can take specifically $m = (n+2)/4$ but we did not so to avoid considering the fractional powers
of pseudodifferential operators, see \cite{Shub87} and elsewhere.
\end{remark}

Note that system (\ref{eq.mhdefivf}) is equivalent to the generalised Navier-Stokes equations in
the sense of \cite{MeraShlaTark15} corresponding to the elliptic complex of (\ref{eq.ellcomp}) whose
differential is changed to
   $\sqrt{\Id + (\varepsilon/\nu) \iD^{m-1}} A$.

If $s \in \Z_{\geq 0}$, we use the designation $\bs (s)$ for the pair $(2ms,s)$.
Let $H^{\bs (s)} (\cC,F^i)$ stand for the Hilbert space of all sections of the bundle $F^i$ over
$\cC$ of Slobodetskij class $H^{\bs (s)}$, see \cite{Slob58}.
A section $u$ is said to belong to $H^{\bs (s)} (\cC,F^i)$ if,
   for any coordinate patch $U$ in $\cX$ over which the bundle $F^i$ is trivial,
the derivatives $\partial_x^\alpha \partial_t^j u$ are locally square integrable in
   $U \times \overline{I}$
whenever $|\alpha| + 2mj \leq 2ms$.
On choosing a finite covering of $\cX$ by such patches and a $C^\infty$ partition of unity on $\cX$
subordinate to the covering we introduce a scalar product in $H^{\bs (s)} (\cC,F^i)$.
The corresponding norm is independent on the choice of coordinates and local trivialisations up to
the norm equivalence.
It is well known that each section of $H^{\bs (s)} (\cC,F^i)$ possesses a trace on the base of the
cylinder which belongs to
   $H^{2m (s-1/2)} (\cX,F^i)$,
the result being due to \cite{Slob58}.

To specify a solution $v$ to system (\ref{eq.mhdefivf}) we prescribe initial values at $t = 0$ to
$v$.
To wit,
\begin{equation}
\label{eq.initval}
   v (\cdot,0) = v_0
\end{equation}
on $\cX$, where $v_0$ is a given section of $F^i$.
We will look for a velocity in $H^{\bs (1)} (\cC,F^i)$, hence $v_0$ is assumed to be in
   $H^{m} (\cX,F^i)$
unless otherwise stated.
One imposes no initial condition on the pressure, for (\ref{eq.mhdefivf}) does not contain any time
derivative of $p$.
Equations (\ref{eq.mhdefivf}) supplemented with initial condition (\ref{eq.initval}) constitute the
Cauchy problem for the generalised Navier-Stokes equations.
For the existence of a solution to the Cauchy problem it is necessary that $A^\ast v_0 = 0$ on $\cX$,
which we will always assume.
While the velocity $v$ seems to be uniquely determined by the data $f$ and $v_0$, the pressure $p$
is determined only up to solutions of the homogeneous equation $Ap = 0$ on $\cX$.
The non-uniqueness of $p$ is evaluated by the dimension of the cohomology of complex (\ref{eq.ellcomp})
at step $i-1$.
Since the complex is elliptic, the cohomology is finite dimensional,
   see \cite{Tark95}.
We make use of this fact in the next section.

\section{Elimination of the pressure}
\label{s.eliotpr}

The approach of \cite{ShlaTark16} which consists in applying the differential $A$ to the both sides
of (\ref{eq.mhdefivf}) and passing to the equation for the vorticity of $v$ requires the cohomology
of (\ref{eq.ellcomp}) to be zero at step $i$.
If this is the case, then the Hodge decomposition theorem for elliptic complexes on compact closed
manifolds implies readily
$
   v = A^\ast G (Av)
$
for all divergence-free sections $v$ of $F^i$, where $G$ is the so-called Green operator related to
the Laplacian $\iD$,
   see \cite{Tark95}.
Hence it follows that
$
   A N (v) = A N A^\ast G (Av),
$
and so we can choose $N^{i+1} = A N^{i} A^\ast G$ as the $(N\!+\!1)\,$-th component of a new cochain
mapping of complex (\ref{eq.ellcomp}).
This mapping satisfies the commutativity relation $A N (v) = N (Av)$ for all divergence-free sections
$v$ of $F^i$ and bears moreover the property that
   $(N (w),z) = 0$ for all divergence-free sections $z$ of $F^{i+1}$.
Since pseudodifferential operators are involved in the very formulation of (\ref{eq.mhdefivf}), we
focus on the classical approach which expoits the Helmholtz projection, cf.
   \cite{MeraShlaTark15}.

The idea is to project the first equation of (\ref{eq.mhdefivf}) onto the space of divergence-free
sections of $F^i$.
Denote by $H$ the orthogonal projection onto the null-space of $\iD$ in $L^2 (\cX,F^i)$.

\begin{lemma}
\label{l.Helmholtz1}
For $H$ and $G$ defined above, the operator
$
   Pv := Hv + A^\ast A\, Gv
$
is an orthogonal projection in $L^2 (\cX,F^i)$.
\end{lemma}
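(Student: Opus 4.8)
The plan is to verify the three defining properties of an orthogonal projection for $P = H + A^{\ast} A\, G$: idempotency $P^2 = P$, self-adjointness $P^{\ast} = P$, and that $P$ maps $L^2(\cX,F^i)$ onto the subspace of divergence-free sections. The essential input is the Hodge decomposition for the elliptic complex (\ref{eq.ellcomp}) on the compact closed manifold $\cX$, which gives the orthogonal splitting $L^2(\cX,F^i) = \mathcal{H}^i \oplus \overline{A C^\infty(\cX,F^{i-1})} \oplus \overline{A^{\ast} C^\infty(\cX,F^{i+1})}$ together with the identity $\Id = H + \iD G = H + A^{\ast} A\, G + A A^{\ast} G$ on $L^2(\cX,F^i)$, and the standard commutation facts: $G$ is self-adjoint and nonnegative, $G$ commutes with $A$, $A^{\ast}$ and with $H$, the Green operator $G$ vanishes on $\mathcal{H}^i$, and $HA = 0$, $HA^{\ast} = 0$, $A^{\ast}H = 0$, $A H = 0$ on the appropriate bundles. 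All of these are collected in \cite{Tark95}.

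First I would record that $P = \Id - A A^{\ast} G$, which is immediate from the identity $\Id = H + A^{\ast} A\, G + A A^{\ast} G$. Self-adjointness of $P$ then follows at once, since $A A^{\ast} G = A A^{\ast} G$ is self-adjoint: $G$ is self-adjoint and commutes with $A A^{\ast}$, so $(A A^{\ast} G)^{\ast} = G A A^{\ast} = A A^{\ast} G$. Next, for idempotency I would compute $P^2 = (\Id - A A^{\ast} G)^2 = \Id - 2 A A^{\ast} G + (A A^{\ast} G)^2$, so it suffices to show $(A A^{\ast} G)^2 = A A^{\ast} G$. Using $A^2 = 0$ — hence $A^{\ast}{}^2 = 0$ — together with $G A^{\ast} = A^{\ast} G$, one gets $(A A^{\ast} G)(A A^{\ast} G) = A A^{\ast} (G A) A^{\ast} G = A A^{\ast} A (A^{\ast} G) G = A (A^{\ast} A + A A^{\ast}) A^{\ast} G^2 = A \iD A^{\ast} G^2$, where I inserted the vanishing term $A A^{\ast} A A^{\ast} = 0$ to complete the Laplacian; then $A \iD G^2 A^{\ast} = A (\Id - H) G A^{\ast} = A G A^{\ast} - A H G A^{\ast}$, and since $GH = 0$ and $HA^{\ast} = 0$ this collapses to $A G A^{\ast} = A A^{\ast} G$, as desired. (One can also argue more cleanly: $A^{\ast} G$ maps into $\ker A^{\ast}$-orthogonal pieces in a way that makes $A^{\ast} G A A^{\ast} = A^{\ast} A A^{\ast} G = (\iD - A A^{\ast}) A^{\ast} G = \iD A^{\ast} G$, giving the same conclusion — I would pick whichever bookkeeping is shortest.)

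Finally, for the range statement I would show $P v$ is divergence-free for every $v$, i.e. $A^{\ast} P v = 0$, and conversely that $P v = v$ whenever $A^{\ast} v = 0$. For the first, $A^{\ast} P v = A^{\ast} v - A^{\ast} A A^{\ast} G v = A^{\ast} v - (\iD - A^{\ast} A) A^{\ast} G v$; wait — more directly, $A^{\ast} A A^{\ast} G = \iD A^{\ast} G - A^{\ast} A A^{\ast} A^{\ast}{} G$? I would instead use $A^{\ast}(A A^{\ast} G) = (A^{\ast} A) A^{\ast} G$, then note $A^{\ast} A A^{\ast} = (\iD - A A^{\ast}) A^{\ast} = \iD A^{\ast}$ since $A^{\ast}{}^2 = 0$, so $A^{\ast} A A^{\ast} G = \iD A^{\ast} G = A^{\ast} \iD G = A^{\ast}(\Id - H) = A^{\ast}$; hence $A^{\ast} P v = A^{\ast} v - A^{\ast} v = 0$. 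For the converse, if $A^{\ast} v = 0$ then the middle component in the Hodge decomposition of $v$ vanishes, so $v = H v + A^{\ast} A\, G v = P v$; equivalently $A A^{\ast} G v = 0$ because $G$ commutes with $A^{\ast}$ and $A^{\ast} v = 0$. This exhausts the verification. I do not expect a genuine obstacle here: the only point requiring a little care is the order in which one applies the commutation relations and the insertion of vanishing terms $A^2 = A^{\ast}{}^2 = 0$ to reassemble the Laplacian, and keeping track of which bundle $F^{i-1}$, $F^i$, $F^{i+1}$ each operator acts on so that the Hodge-theoretic identities are invoked on the correct space.
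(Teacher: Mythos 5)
Your proof is correct and rests on the same essential input as the paper's, namely the generalised Hodge decomposition $\Id = H + A^\ast A\,G + A A^\ast G$ together with the standard properties of $G$ and $H$ from \cite{Tark95}. The only difference is stylistic: the paper concludes at once from the pairwise orthogonality of the three summands (a consequence of $A^2 = 0$) that each partial sum is an orthogonal projection, whereas you verify $P^\ast = P$ and $P^2 = P$ by explicit operator algebra — both routes are sound.
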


\begin{proof}
Since the Laplacian is elliptic, the space of all $h \in L^2 (\cX,F^i)$ satisfying  $\iD h = 0$ in
$\cX$  is finite dimensional.
The elements of this space are called harmonic sections and they actually satisfy both
   $Ah = 0$ and
   $A^\ast h = 0$
in $\cX$.
The harmonic sections are $C^\infty$ sections of $F^i$, and so the orthogonal projection $H$ onto the
space is a smoothing operator.
Moreover, the Green operator is a formally selfadjoint pseudodifferential operator of order $-2$ on
sections of $F^i$, such that
   $H G = 0$ and
   $A G = G A$,
and
   the identity operator on sections of $F^i$ splits into $H + A^\ast A G + A A^\ast G$.
The decomposition $v = Hv + A^\ast A Gv + A A^\ast Gv$ valid for all $v \in L^2 (\cX,F^i)$ is
usually referred to as the generalised Hodge decomposition.
Since $A^2 = 0$, the summands are pairwise orthogonal, and so both
   $A^\ast A G$ and
   $A A^\ast G$
are orthogonal projections, too.
For a thorough discussion of Hodge theory we refer the reader to
   \cite[Ch.~4]{Tark95}.
\end{proof}

The projector $P$ is an analogue of the Helmholtz projector onto vector fields which are
divergence-free.
A slightly different approach to this decomposition is presented in \cite{Lady70}.

\begin{lemma}
\label{l.Helmholtz2}
In order that $Pv = v$ be valid it is necessary and sufficient that $A^\ast v = 0$ in $\cX$.
\end{lemma}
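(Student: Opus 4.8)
The plan is to read off the claim directly from the generalised Hodge decomposition established in the proof of Lemma~\ref{l.Helmholtz1}, namely
$$
   v = Hv + A^\ast A\, Gv + A A^\ast G v
$$
for every $v \in L^2 (\cX,F^i)$, together with the pairwise orthogonality of the three summands. By definition $Pv = Hv + A^\ast A\, Gv$, so $v - Pv = A A^\ast G v$, and the whole matter reduces to showing that this last term vanishes precisely when $A^\ast v = 0$.

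For the sufficiency direction I would argue as follows. Suppose $A^\ast v = 0$ in $\cX$. Applying $A^\ast$ to the generalised Hodge decomposition and using $A^\ast H = 0$ (harmonic sections are annihilated by $A^\ast$), $(A^\ast)^2 = 0$, and the commutation $A G = G A$ (hence $A^\ast G = G A^\ast$), one gets $0 = A^\ast v = A^\ast A A^\ast G v$. Pairing this with $A^\ast G v$ and integrating by parts gives $\| A A^\ast G v \|^2 = (A^\ast A A^\ast G v, A^\ast G v) = 0$, so $A A^\ast G v = 0$ and therefore $v = Hv + A^\ast A\, Gv = Pv$. Alternatively, and perhaps more cleanly, one can simply invoke orthogonality: $A A^\ast G v$ is orthogonal to both $Hv$ and $A^\ast A G v$, and $A^\ast v = 0$ forces $(v, A A^\ast G v) = (A^\ast v, A^\ast G v) = 0$; combined with the decomposition this yields $\| A A^\ast G v \|^2 = 0$.

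For the necessity direction, assume $Pv = v$, i.e. $A A^\ast G v = 0$. Then $A^\ast v = A^\ast H v + A^\ast A^\ast A G v + A^\ast A A^\ast G v$. The first two terms vanish ($A^\ast$ kills harmonic sections, and $(A^\ast)^2 = 0$), and the third is $A^\ast (A A^\ast G v) = A^\ast 0 = 0$. Hence $A^\ast v = 0$ in $\cX$.

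I do not anticipate a genuine obstacle here; the statement is essentially a bookkeeping consequence of Hodge theory. The only point requiring a little care is making sure every manipulation ($A^\ast H = 0$, $A G = G A$, orthogonality of the Hodge summands, the integration-by-parts identity $\| A w \|^2 = (A^\ast A w, w)$) has already been recorded — all of these are supplied by the proof of Lemma~\ref{l.Helmholtz1} or by the cited Hodge theory in \cite[Ch.~4]{Tark95} — and in keeping the two implications cleanly separated so that one does not implicitly assume a regularity of $v$ that is not available for a general $L^2$ section.
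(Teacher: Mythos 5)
Your proof is correct and follows essentially the same route as the paper: both directions are read off from the generalised Hodge decomposition $v = Hv + A^\ast A\,Gv + AA^\ast Gv$ together with $A^\ast H = 0$, $(A^\ast)^2 = 0$ and $A^\ast G = GA^\ast$. The paper's sufficiency step is just the one-liner $A^\ast v = 0 \Rightarrow A^\ast Gv = GA^\ast v = 0 \Rightarrow AA^\ast Gv = 0$, which is a slightly more direct version of your integration-by-parts/orthogonality argument, but the substance is the same.
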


\begin{proof}
Suppose that $Pv = v$.
Then $A^\ast v = A^\ast (Hv + A^\ast A\, Gv)$ vanishes in $\cX$.
On the other hand, if $A^\ast v = 0$ in $\cX$, then $A^\ast (Gv) = 0$ whence $Pv = v$, as
desired.
\end{proof}

From Lemma \ref{l.Helmholtz2} it follows that $P$ vanishes on sections of the form $Ap$
with
   $p \in L^2 (\cX,F^{i-1})$ and
   $Ap \in L^2 (\cX,F^i)$.
Indeed, to prove this it suffices to show that $Ap$ is orthogonal to all sections $v$
satisfying $A^\ast v = 0$ in $\cX$.
For such a section $v$ we get
\begin{eqnarray*}
   (Ap, v)_{L^2 (\cX,F^i)}
 & = &
   (p, A^\ast v)_{L^2 (\cX,F^{i-1})}
\\
 & = &
   0,
\end{eqnarray*}
as desired.

On applying the Helmholtz projector to the first equation of system (\ref{eq.mhdefivf}) one
obtains
$$
   (Pv)'_t + \varepsilon\, P (\iD^m v) + \nu\, P (\iD v) + P (Ap) + P N (v) = Pf
$$
while the second equation of (\ref{eq.mhdefivf}) means that $Pv = v$ in $\cC$.
Since $P (Ap) = 0$, this allows one to eliminate the pressure from the first equation,
thus obtaining an equivalent form
$$
\begin{array}{rcl}
   (Pv)'_t + \varepsilon\, P \iD^m (Pv) + \nu\, P (\iD v) + P N (Pv)
 & = &
   Pf,
\\
   (\Id\!-\!P) Ap + (\Id\!-\!P) N (Pv)
 & = &
   (\Id-P) f
\end{array}
$$
of equations (\ref{eq.mhdefivf}), for
   $(\Id\!-\!P) P = 0$
and
\begin{eqnarray*}
   (\Id - P) \iD^m P
 & = &
   (\Id - H - A^\ast A G)\, \iD^m\, (H + A^\ast A G)
\\
 & = &
   A A^\ast G \left( A^\ast A \right)^{m+1} G
\\
 & = &
   0,
\end{eqnarray*}
the last equality being due to the fact that $A^\ast G = G A^\ast$.

In other words, we separate the Cauchy problem (\ref{eq.mhdefivf}), (\ref{eq.initval}) into
two single problems
$$
   (Pv)'_t + \varepsilon\, P \iD^m (Pv) + \nu\, P (\iD v) + P N (Pv)
 = Pf
$$
in $\cC$ under the initial condition
$$
   Pv\, (\cdot,0) = v_0
$$
on $\cX$ and
\begin{equation}
\label{eq.sefp}
\begin{array}{rclcl}
   Ap
 & =
 & (\Id - P)\, (f - N (Pv))
 & \mbox{in}
 & \cC,
\\
   (p,u)_{L^2 (\cX,F^{i-1})}
 & =
 & 0
 & \mbox{for}
 & u \in \ker A.
\end{array}
\end{equation}

The operator $P \iD^m$ is sometimes called the Stokes operator.
It is a pseudodifferential operator of order $2m$ on sections of $F^i$.
The only solution of problem (\ref{eq.sefp}) is given by
$$
   p = A^\ast G (\Id - P)\, (f - N (Pv)).
$$

\begin{lemma}
\label{l.Stokes}
Suppose that $u \in H^{\bs (1)} (\cC,F^i)$ is a solution of the Cauchy problem
\begin{equation}
\label{eq.Stokes}
\begin{array}{rclll}
   u'_t + \varepsilon\, P \iD^m u + \nu\, P (\iD u) + P N (u)
 & =
 & Pf
 & \mbox{in}
 & \cC,
\\
   u (\cdot,0)
 & =
 & v_0
 & \mbox{on}
 & \cX
\end{array}
\end{equation}
in the cylinder.
Then $A^\ast u = 0$ in $\cC$.
\end{lemma}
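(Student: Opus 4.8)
The plan is to apply the complementary projector $\Id - P$ to the first equation of (\ref{eq.Stokes}) and to exploit the fact that every term there apart from $u'_t$ lies in the range of $P$. Indeed, for almost every $t \in I$ the sections $P \iD^m u$, $P (\iD u)$, $P N (u)$ and $Pf$ all belong to the image of the orthogonal projection $P$ in $L^2 (\cX,F^i)$, which is annihilated by $\Id - P$. Rewriting the equation as $u'_t = P \big( f - \varepsilon \iD^m u - \nu \iD u - N (u) \big)$ we see at once that $u'_t (\cdot,t)$ lies in the range of $P$ for almost all $t$, whence $(\Id - P)\, u'_t = 0$ in $\cC$.

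Next I would observe that $P = H + A^\ast A\, G$ is a fixed operator acting on the space variables alone, so it commutes with $\partial_t$; being moreover a pseudodifferential operator of order zero, $P$ maps $H^{\bs (1)} (\cC,F^i)$ into itself, acting slice-wise in $t$. Consequently $(\Id - P)\, u \in H^{\bs (1)} (\cC,F^i)$ and $\partial_t \big( (\Id - P)\, u \big) = (\Id - P)\, u'_t = 0$ in $\cC$. In particular the curve $t \mapsto (\Id - P)\, u (\cdot,t)$ in $L^2 (\cX,F^i)$ has vanishing distributional derivative, hence is constant on $\overline{I}$ after modification on a null set. Here one uses that the $H^{\bs (1)}$ regularity, together with the Slobodetskij trace theorem quoted before the statement (see \cite{Slob58}), guarantees that $u$, and therefore $(\Id - P)\, u$, has a well-defined continuous representative $\overline{I} \to L^2 (\cX,F^i)$ whose trace at $t = 0$ equals $v_0$, resp. $(\Id - P)\, v_0$.

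It then remains to identify the constant. By the standing assumption on the initial data, $A^\ast v_0 = 0$ on $\cX$, so Lemma \ref{l.Helmholtz2} gives $P v_0 = v_0$, that is, $(\Id - P)\, v_0 = 0$. Therefore $(\Id - P)\, u (\cdot,t) = (\Id - P)\, v_0 = 0$ for all $t \in \overline{I}$, i.e. $P u = u$ in $\cC$, and a second application of Lemma \ref{l.Helmholtz2}, now slice-wise in $t$, yields $A^\ast u = 0$ in $\cC$, as claimed.

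The computation itself is short, and the only delicate point is the functional-analytic bookkeeping of the middle step, namely that the a priori $H^{\bs (1)}$ section $(\Id - P)\, u$ genuinely admits a continuous-in-$t$ representative with values in $L^2 (\cX,F^i)$, so that ``constant distributional $t$-derivative'' may be upgraded to ``pointwise constant'' and the value at $t = 0$ read off unambiguously. This is exactly what the trace result of \cite{Slob58} provides, so no additional estimate is needed; morally, the assertion is just the statement that the pressure-eliminated Stokes system propagates the constraint $A^\ast u = 0$ once it holds at the initial time.
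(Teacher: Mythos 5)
Your proof is correct and is essentially the paper's argument: both rest on the observation that every term of the equation except $u'_t$ lies in the range of $P$, so that the quantity measuring failure of the constraint has vanishing time derivative and vanishes at $t=0$ because $A^\ast v_0 = 0$. The only cosmetic difference is that the paper applies $A^\ast$ directly (using $A^\ast P = 0$) to get $\partial_t A^\ast u = 0$, whereas you apply $\Id - P$ and pass through Lemma \ref{l.Helmholtz2} twice; the two routes are equivalent.
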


\begin{proof}
Indeed, from the differential equation of (\ref{eq.Stokes}) it follows that
$$
   \frac{\partial}{\partial t}\, A^{\ast} u = 0
$$
in $\cC$.
Since
   $A^\ast u = A^\ast v_0 = 0$
for $t = 0$, we deduce readily that $A^\ast u = 0$ for all $t \in I$, as desired.
\end{proof}

On summarising what has been said we choose the following way of solving equations
(\ref{eq.mhdefivf}).
We first construct a solution $u$ of Cauchy problem (\ref{eq.Stokes}).
According to Lemma \ref{l.Stokes}, $u$ satisfies $A^\ast u = 0$ in $\cC$, and so
$Pu = u$.
Substitute the section $v := u$ into equation (\ref{eq.sefp}) for $p$.
From this equation the pressure $p$ is determined uniquely and bears the appropriate
regularity of the canonical solution of the inhomogeneous equation related to complex
(\ref{eq.ellcomp}) at step $i$.
Finally, on combining equations (\ref{eq.Stokes}) and (\ref{eq.sefp}) we conclude that
the pair $(v,p)$ is a solution of (\ref{eq.mhdefivf}) under condition (\ref{eq.initval}).

In the sequel we focus on the study of operator equation (\ref{eq.Stokes}) by Hilbert space
methods.

\section{Fundamental solution to the heat equation}
\label{s.fsttheq}

For sections of $F^i$, consider the equation of evolution
\begin{equation}
\label{eq.heatequ}
   \frac{\partial u}{\partial t}
 = - \varepsilon \iD^m u - \nu \iD u
\end{equation}
on the semiaxis $t > 0$.
It is easily seen that the endomorphism of $F^i_x$ given by the principal symbol
   $- \varepsilon \left( \sigma^2 (\iD) (x,\xi) \right)^m$
of the operator on the right-hand side of (\ref{eq.heatequ}) at any nonzero vector
$\xi \in T^\ast_x \cX$ has only real eigenvalues strictly less than zero.
Therefore, the equation (\ref{eq.heatequ}) is actually one of the well-known type of
partial differential equations, called parabolic, which enjoy a behaviour essentially
like that of the classical equation of the heat conduction.
For instance, one has as a smoothing operator the fundamental solution
$
   t \mapsto \psi^i (x,y,t) \in C^{\infty} (\cX \times \cX, F^i \otimes F^i{}^\ast)
$
which generates the solution of the Cauchy problem for (\ref{eq.heatequ}) in the sence
that, for any $u_0 \in C^{\infty} (\cX,F^i)$,
$$
   (\varPsi^i u_0) (x,t)
 = \int_{\cX} (u_0, \ast^{-1} \psi^i (x,\cdot,t))_{y} dy
$$
is regular for $t > 0$, satisfies (\ref{eq.heatequ}) and has initial value $u_0$.
The fundamental solution $\varPsi^i$ is here unique by virtue of the compactness of $\cX$.
We write $\varPsi$ for the corresponding graded operator.

We recall briefly the Hilbert-Levi procedure for constructing the fundamental solutions
for parabolic operators, and state certain basic estimate for them, only to the extent
which we shall need later.
For the details we refer the reader to \cite{Eide56} and elsewhere.

Denote by
$
   L = \partial_t + \varepsilon \iD^m + \nu \iD
$
the parabolic operator of (\ref{eq.heatequ}) on sections of the induced bundle $F^i$ over
$\cX \times \R_{> 0}$.
We can cover $\cX$ by a finite number of coordinate patches $U$ such that over each $U$
the bundle $F^i$ is trivial, i.e., the restriction $F^i \restriction_U$ is isomorphic to
the trivial bundle $U \times \mathbb{C}^{k_i}$, where $k_i$ is the fibre dimension of
$F^i$.
Let $x = (x^1, \ldots, x^n)$ be local coordinates in $U$.
Then any section of $F^i$ over $U$ can be regarded as a $k_i\,$-column of complex-valued
functions of coordinates $x$.
Under this identification, for any $u \in C^\infty (U,F^i)$, the $k_i\,$-column $\iD^m u$
of functions in $U$ is represented by
$$
   \iD^m u
 = \Big( \sum_{|\alpha| \leq 2} \iD_{\alpha} (x)\, \partial^{\alpha} \Big)^m u,
$$
where
   $\iD_{\alpha} (x)$ are $(k_i \times k_i)\,$-matrices of $C^\infty$ functions of $x$,
   by $\partial^{\alpha}$ is meant the derivative
   $(\partial/\partial x^1)^{\alpha_1} \ldots (\partial/\partial x^n)^{\alpha_n}$,
and
   the upper index $m$ refers to the power with respect to the composition of operators.
So, the principal symbol of $\iD^m$ is given by
$$
   \sigma^{2m} (\iD^m) (x,\xi)
 = \Big( - \sum_{|\alpha| = 2} \iD_{\alpha} (x)\, \xi^{\alpha} \Big)^m
$$
for $(x,\xi) \in U \times \R^n$, where
   $\xi^{\alpha} = \xi_1^{\alpha_1} \ldots \xi_n^{\alpha_n}$.
As already mentioned, $\sigma^{2m} (\iD^m) (x,\xi)$ has only strictly negative eigenvalues
for each nonzero vector $\xi$, and hence the matrix
\begin{equation}
\label{eq.paramet}
   p_U (x,y,t)
 = \frac{1}{(2 \pi)^n}
   \int
   \exp \left( t\, \varepsilon \sigma^{2m} (\iD^m) (x,\xi) + \sqrt{-1}\, \langle x-y, \xi \rangle\, \Id
        \right)
   d \xi
\end{equation}
is well defined for all $x, y \in U$ and $t > 0$, where
   $\Id$ stands for the unity matrix of type $k_i \times k_i$.

It will be seen below that $p_U$ describes the principal part of singularity for the fundamental
solution of the parabolic operator $L$ in $U$.
For that reason $p_U$ is said to be a local parametrix for this operator.
Letting now
$$
   \sum \varphi_U^2 \equiv 1
$$
be a quadratic partition of unity on $\cX$ subordinate to a covering $\{\, U\, \}$, and considering
the expression
$
   \varphi_U (x) p_U (x,y,t) \varphi_U (y)
$
as a section of $F^i \otimes (F^i)^\ast$ with support in $U \times U$, we define a global parametrix
for (\ref{eq.heatequ}) by
$$
   p (x,y,t)
 = \sum_U \varphi_U (x) p_U (x,y,t) \varphi_U (y).
$$
The fundamental solution $\psi (x,y,t)$ of $L$ will then be given as the unique solution of the
integral equation of Volterra type
$$
   \psi (x,y,t)
 = p (x,y,t)
 - \int_0^t dt' \int_{\cX} (L p (\cdot,y,t'), \ast^{-1} \psi (x,\cdot,t-t'))_{z} dz,
$$
the operator $L$ acting in $z$.
Since the singularities of $p$ and $Lp$ are relatively weak, we can solve the integral equation
by the standard method of successive approximation.
The kernel $\psi (x,y,t)$ obtained in this way is easily verified to be of class $C^\infty$, when
$t > 0$, and,
$$
   u (x,t)
 = \int_{\cX} (u_0, \ast^{-1} \psi (x,\cdot,t))_{y} dy
$$
satisfies
   $Lu = 0$ for $t > 0$
and
   $u (x,0) = u_0 (x)$ for all $x \in \cX$.

As for the estimates, note that if we define, relative to a Riemannian metric $ds$ on $\cX$, a
distance function to be
$$
   d (x,y) = \inf \int_{\widehat{xy}} ds
$$
for $x, y \in \cX$, where $\widehat{xy}$ is a path from $x$ to $y$, then
\begin{equation}
\label{eq.eotfsps}
   |\partial_x^{\alpha} \partial_y^{\beta} \psi (x,y,t)|
 \leq
   c\,
   \frac{1}{t^{\frac{\scriptstyle{n + |\alpha| + |\beta|}}{\scriptstyle{2m}}}}\,
   \exp \Big( - c' \frac{(d (x,y))^{(2m)'}}{t^{(2m)' - 1}}\Big)
\end{equation}
locally on $\cX'$ and for each entry of the matrix, where $(2m)'$ is the conjugate number
for $2m$ given by $1/(2m) + 1/(2m)' = 1$.
Also, as can be inferred and in fact easily proved from (\ref{eq.paramet}) and (\ref{eq.eotfsps}),
we get
$$
   |\psi (x,y,t) - p_U (x,y,t)|
 \leq
   c\,
   \frac{1}{t^{\frac{\scriptstyle{n-1}}{\scriptstyle{2m}}}}\,
   \exp \Big( - c' \frac{(d (x,y))^{(2m)'}}{t^{(2m)' - 1}}\Big)
$$
uniformly on each compact subset of $U \times U$, showing that locally $p_U$ yields a first
approximation to the fundamental solution $\psi (x,y,t)$.

\begin{lemma}
\label{l.Greenhe}
Suppose that $u$ is a section of $F^i$ over $\cC$ of Slobodetskij class
$H^{\bs (1)} (\cC)$.
Then
\begin{equation}
\label{eq.Greenhe}
   u (x,t)
 = (\varPsi u (\cdot,0)) (x,t)
 + \int_0^t dt' \int_{\cX} (Lu (\cdot,t'), \ast^{-1} \psi (x,\cdot,t-t'))_{y} dy
\end{equation}
for all $(x,t) \in \cC$.
\end{lemma}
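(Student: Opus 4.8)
The plan is to obtain \eqref{eq.Greenhe} as the Green (Duhamel) formula for the parabolic operator $L = \partial_t + \varepsilon \iD^m + \nu \iD$ tested against its own fundamental solution $\psi$. Observe first that both sides of \eqref{eq.Greenhe} are bounded affine functions of $u$ in the norm of $H^{\bs (1)} (\cC, F^i)$: the left-hand side is $u$ itself, while on the right the first summand factors through the continuous trace map $u \mapsto u (\cdot, 0) \in H^m (\cX, F^i)$ followed by the bounded solution operator $\varPsi$, and the second summand is the parabolic potential applied to $L u \in L^2 (\cC, F^i)$, which is bounded into $H^{\bs (1)} (\cC, F^i)$ by maximal parabolic regularity. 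Hence, by the density of $C^\infty (\overline{\cC}, F^i)$ in $H^{\bs (1)} (\cC, F^i)$, it suffices to establish \eqref{eq.Greenhe} pointwise for sections $u$ that are smooth up to $t = 0$; the general case then follows as an identity in $H^{\bs (1)} (\cC, F^i)$, in particular for almost every $(x, t) \in \cC$.

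Given a smooth $u$, I would fix $(x, t) \in \cC$, pick $0 < \delta < t$, and apply Green's formula on the truncated cylinder $\cX \times (0, t - \delta)$ to the pair consisting of $u$ and the kernel $(y, t') \mapsto \ast^{-1} \psi (x, y, t - t')$. Since $\cX$ is closed there are no lateral boundary contributions, and since $\iD$, and therefore $\iD^m$, is formally selfadjoint on $L^2 (\cX, F^i)$, the spatial part of $L$ is transferred onto the kernel without boundary terms, leaving only the time endpoints:
\begin{multline*}
   \int_0^{t - \delta} dt' \int_{\cX}
   \Big( (L u (\cdot, t'), \ast^{-1} \psi (x, \cdot, t - t'))_y \\
       - (u (\cdot, t'), (- \partial_{t'} + \varepsilon \iD^m + \nu \iD)_y [\ast^{-1} \psi (x, \cdot, t - t')])_y \Big)\, dy \\
 = \Big[ \int_{\cX} (u (\cdot, t'), \ast^{-1} \psi (x, \cdot, t - t'))_y\, dy \Big]_{t' = 0}^{t' = t - \delta}.
\end{multline*}
The crucial input is that $\ast^{-1} \psi (x, \cdot, t - \cdot)$ is a fundamental solution of the backward parabolic problem dual to $L$; equivalently, $(- \partial_{t'} + \varepsilon \iD^m + \nu \iD)_y [\ast^{-1} \psi (x, \cdot, t - t')] = 0$ for all $t' < t$. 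This is a standard by-product of the Hilbert-Levi construction recalled above, see \cite{Eide56}. Consequently the second term under the integral vanishes identically, and the evaluation at $t' = 0$ equals $- (\varPsi u (\cdot, 0)) (x, t)$ by the very definition of $\varPsi$.

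It remains to let $\delta \to 0^+$. In the volume integral on the left, the Gaussian-type bound \eqref{eq.eotfsps} shows that $(y, t') \mapsto \psi (x, y, t - t')$ is integrable over $\cX \times (0, t)$ with an integrable singularity as $t' \to t$; since $L u$ is continuous, hence integrable over $\cX \times (0, t)$, dominated convergence gives convergence to $\int_0^t dt' \int_{\cX} (L u (\cdot, t'), \ast^{-1} \psi (x, \cdot, t - t'))_y\, dy$. For the remaining boundary term, put $s = t - t' = \delta$; the initial-value property of $\varPsi$ together with the equiboundedness of $\varPsi$ on $C (\cX, F^i)$, again a consequence of \eqref{eq.eotfsps}, and the uniform continuity of $u$ yield
$$
   \int_{\cX} (u (\cdot, t - \delta), \ast^{-1} \psi (x, \cdot, \delta))_y\, dy
 = (\varPsi u (\cdot, t - \delta)) (x, \delta)
 \longrightarrow u (x, t)
$$
as $\delta \to 0^+$. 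Collecting the three limits produces exactly \eqref{eq.Greenhe}.

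The main obstacle I anticipate is the verification that $\ast^{-1} \psi (x, \cdot, t - \cdot)$ solves the adjoint parabolic equation --- this is where the internal structure of the Levi parametrix and the formal selfadjointness of $\iD$ genuinely enter --- together with the passage to the limit in the boundary term, which is routine for smooth $u$ but relies on the sharp estimate \eqref{eq.eotfsps} near the diagonal to control $\varPsi$ uniformly as the time tends to zero. Everything else is bookkeeping with Green's formula and the density argument.
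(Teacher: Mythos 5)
Your proof is correct, but it takes a genuinely different route from the paper's. You derive \eqref{eq.Greenhe} by the classical duality computation: Green's formula on the truncated cylinder $\cX \times (0,t-\delta)$ against the kernel $\ast^{-1}\psi(x,\cdot,t-\cdot)$, the observation that this kernel solves the backward adjoint equation (which requires the formal selfadjointness of $\varepsilon\iD^m+\nu\iD$ and the resulting symmetry of $\psi$), and a limit $\delta\to 0^+$ controlled by the Gaussian bound \eqref{eq.eotfsps}. The paper instead avoids touching the kernel at all after the density reduction: it invokes the unique solvability of the smooth Cauchy problem \eqref{eq.Cpftheq}, checks that the first summand solves it with $f=0$, $u_0=u(\cdot,0)$, and verifies by a Leibniz-rule differentiation of $\int_0^t \varPsi(Lu(\cdot,t'))(x,t-t')\,dt'$ that the second summand solves it with $f=Lu$, $u_0=0$; uniqueness then forces the sum to equal $u$. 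The paper's argument is shorter and pushes all the delicate kernel analysis into the already-established properties of $\varPsi$ (the boundary term $\varPsi(Lu(\cdot,t))(x,0)=Lu(x,t)$ in their computation hides the same $s\to 0^+$ limit you handle explicitly), at the price of relying on well-posedness of the smooth Cauchy problem; your argument is self-contained at the level of the fundamental solution but must separately justify the adjoint equation for $\ast^{-1}\psi(x,\cdot,t-\cdot)$, which you correctly flag as the nontrivial input. Your justification of the density step (both sides continuous on $H^{\bs(1)}(\cC,F^i)$ via the trace theorem, maximal regularity and the boundedness of the potentials as in Lemma \ref{l.bounopp}) is actually more explicit than the paper's one-line appeal to density, and is welcome.
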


\begin{proof}
Since $C^\infty (\overline{\cC}, F^i)$ is dense in $H^{\bs (1)} (\cC, F^i)$, it
suffices to establish the formula for those sections $u$ which are $C^\infty$ in
the closure of $\cC$.
As is well known, the Cauchy problem
\begin{equation}
\label{eq.Cpftheq}
\Big\{
\begin{array}{rclcl}
   Lu
 & =
 & f
 & \mbox{in}
 & \cC,
\\
   u (\cdot,0)
 & =
 & u_0
 & \mbox{on}
 & \cX
\end{array}
\end{equation}
has a unique solution in $C^\infty (\overline{\cC}, F^i)$ for all smooth data $f$
in $\overline{\cC}$ and $u_0$ on $\cX$.
By the above, the first term on the right-hand side of (\ref{eq.Greenhe}) is a solution
of the Cauchy problem with $f = 0$ and $u_0 = u (\cdot,0)$.
Hence, we shall have established the lemma if we prove that the second term is a solution
of the Cauchy problem with $f = Lu$ and $u_0 = 0$.
To this end, we rewrite the second summand on the right-hand side of (\ref{eq.Greenhe}) in
the form
$$
   \int_0^t \varPsi \left( Lu (\cdot,t') \right) (x,t-t')\, dt'
$$
for $(x,t) \in \cC$.
Obviously, the initial value at $t = 0$ of this integral vanishes, and so it remains to
calculate its image by $L$.
We get
\begin{eqnarray*}
\lefteqn{
   L \int_0^t \varPsi \left( Lu (\cdot,t') \right) (x,t-t')\, dt'
}
\\
 & = &
   \varPsi \left( Lu (\cdot,t) \right) (x,0)
 + \int_0^t L\, \varPsi \left( Lu (\cdot,t') \right) (x,t-t')\, dt'
\\
 & = &
   Lu (x,t),
\end{eqnarray*}
as desired.
\end{proof}

\section{Reduction to an integral equation}
\label{s.rtaiequ}

Given integrable sections $f$ and $u_0$ of $F^i$ over $\cC$ and $\cX$, respectively, we introduce
two potentials
$$
\begin{array}{rcl}
   \mathcal{P}_v (f)\, (x,t)
 & =
 & \displaystyle
   \int_0^t \varPsi \left( f (\cdot,t') \right) (x,t-t')\, dt',
\\
   \mathcal{P}_i (u_0)\, (x,t)
 & =
 & (\varPsi u_0)\, (x,t)
\end{array}
$$
for $(x,t) \in \cC$.
They are said to be parabolic volume and initial value potentials, respectively.

\begin{lemma}
\label{l.bounopp}
As defined above, the parabolic potentials induce continuous linear mappings
$$
\begin{array}{rrcl}
   \mathcal{P}_v\, :
 & L^2 (\cC, F^i)
 & \to
 & H^{\bs (1)} (\cC, F^i),
\\
   \mathcal{P}_i\, :
 & H^m (\cX, F^i)
 & \to
 & H^{\bs (1)} (\cC, F^i).
\end{array}
$$
\end{lemma}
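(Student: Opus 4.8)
The plan is to reduce the continuity of both potentials to standard parabolic estimates for the fundamental solution $\psi$, using the Fourier/energy identity built into the construction. First I would treat $\mathcal{P}_v$. The key observation is that for smooth data $f$ the section $u := \mathcal{P}_v(f)$ solves the Cauchy problem $Lu = f$ in $\cC$, $u(\cdot,0) = 0$ on $\cX$, by the computation already carried out in the proof of Lemma~\ref{l.Greenhe}. So it suffices to prove the a~priori estimate $\| u \|_{H^{\bs(1)}(\cC,F^i)} \leq c\, \| f \|_{L^2(\cC,F^i)}$ for solutions of this Cauchy problem with zero initial data, and then pass to the limit using the density of $C^\infty(\overline{\cC},F^i)$. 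The estimate itself is the classical $L^2$ maximal regularity statement for the parabolic operator $L = \partial_t + \varepsilon \iD^m + \nu \iD$: pairing $Lu$ with $u$ and with $\iD^m u$ and integrating over $\cC$, the $\partial_t$ term gives $\tfrac12 \| u(\cdot,t) \|^2_{L^2(\cX)}$ by the vanishing initial condition, the ellipticity of $\iD$ (recorded earlier: $(\iD u, u) = \| Au\|^2 + \| A^\ast u \|^2 \geq 0$, and $\iD$ elliptic of order $2$) controls $\int_0^T \| u \|^2_{H^{2m}(\cX)} dt$ after absorbing the lower-order term $\nu \iD u$, and then reading off $\partial_t u = f - \varepsilon \iD^m u - \nu \iD u$ controls $\int_0^T \| \partial_t u \|^2_{L^2(\cX)} dt$. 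Together these two bounds are exactly the norm of $H^{\bs(1)}(\cC,F^i)$, since $\bs(1) = (2m,1)$ and $|\alpha| + 2mj \leq 2m$ forces either $j=0,\ |\alpha|\leq 2m$ or $j=1,\ \alpha=0$.

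Next I would treat $\mathcal{P}_i$. Here $u := \mathcal{P}_i(u_0) = \varPsi u_0$ solves $Lu = 0$, $u(\cdot,0) = u_0$. For smooth $u_0$ with $A^\ast u_0$ unconstrained the same energy identities apply, but now the boundary term at $t=0$ produces $\| u_0 \|^2_{L^2(\cX)}$ and, when pairing with $\iD^m u$, a term of the form $(\iD^{m/2} u_0, \iD^{m/2} u_0)$-type quantity that is bounded by $\| u_0 \|^2_{H^m(\cX)}$ — this is precisely why $u_0$ is required to lie in $H^m(\cX,F^i)$ rather than merely $L^2$. Concretely, multiplying $Lu=0$ by $\iD^m u$ and integrating by parts in $t$ over $(0,T)$ gives $\varepsilon \int_0^T \| \iD^m u \|^2 dt + \tfrac12 \| \iD^{m/2} u(\cdot,0) \|^2 \lesssim \| \iD^{m/2} u_0 \|^2 + (\text{controllable terms})$, after which the $H^{2m}$-in-space bound follows from ellipticity of $\iD$ and the $\partial_t u$ bound again by reading off the equation. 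Alternatively, one can invoke the pointwise kernel estimates (\ref{eq.eotfsps}) directly: they give $L^2 \to L^2$ boundedness of $u_0 \mapsto \partial_x^\alpha u(\cdot,t)$ with operator norm $\lesssim t^{-|\alpha|/2m}$, which is $L^2$-in-$t$ only for $|\alpha| < m$; the remaining top-order derivatives are recovered by spending $m$ derivatives on the data and $m$ on the kernel, using $\partial_x^\alpha \varPsi u_0 = \partial_x^{\alpha'} \varPsi (\partial_x^{\alpha''} u_0)$ schematically on coordinate patches.

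Finally I would assemble the two pieces: density of $C^\infty(\overline{\cC},F^i)$ in $L^2(\cC,F^i)$ and of $C^\infty(\cX,F^i)$ in $H^m(\cX,F^i)$, together with the a~priori bounds, extends each mapping by continuity to the asserted domain, and the limit sections still satisfy the respective Cauchy problems (hence the representation formula of Lemma~\ref{l.Greenhe}) by the closedness of $L$ together with the trace theorem of \cite{Slob58} quoted in the text, which guarantees that the $t=0$ trace is continuous from $H^{\bs(1)}(\cC,F^i)$ into $H^m(\cX,F^i)$. The main obstacle is the top-order space regularity: getting the full $H^{2m}(\cX)$-in-space control uniformly in $t$ for $\mathcal{P}_i$ from data only in $H^m(\cX)$, which forces one to balance derivatives carefully between kernel and data and is the step where the estimate (\ref{eq.eotfsps}) and the constraint $2m-1 \geq n/2$ (equivalently $m \geq (n+2)/4$) are genuinely used; by contrast, the $\mathcal{P}_v$ bound is the routine parabolic maximal regularity estimate.
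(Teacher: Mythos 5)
Your proposal is correct and follows essentially the same route as the paper: identify $\mathcal{P}_v(f)$ and $\mathcal{P}_i(u_0)$ as the solutions of the Cauchy problem (\ref{eq.Cpftheq}) with zero initial data, respectively zero right-hand side, and then invoke $L^2$ maximal parabolic regularity — the paper simply cites the theorem of \cite{Slob58} at the point where you supply the energy-method derivation (testing against $u$ and $\iD^m u$ plus elliptic regularity for $\iD^m$). The only inaccuracy is your closing remark: the constraint $m \geq (n+2)/4$ is not used anywhere in this lemma (it enters only through the nonlinearity), and the $H^{2m}$-in-space control is needed in $L^2$ of $t$, not uniformly in $t$.
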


\begin{proof}
Let $f \in L^2 (\cC,F^i)$.
Define $u = \mathcal{P}_v (f)$.
The proof of Lemma~\ref{l.Greenhe} shows that $u$ is a solution of problem (\ref{eq.Cpftheq})
with $u_0 = 0$.
By the maximal regularity theorem of \cite{Slob58}, we get $u \in H^{\bs (1)} (\cC, F^i)$.
Moreover, $u$ depends continuously of $f$, which proves the continuity of $\mathcal{P}_v$.
Analogously one establishes the continuity of the initial value potential.
\end{proof}

In terms of potentials formula (\ref{eq.Greenhe}) looks like
$
   u = \mathcal{P}_i (u (\cdot,0)) + \mathcal{P}_v (Lu)
$
in the cylinder $\cC$.
This leads immediately to a reformulation of the Cauchy problem of Lemma \ref{l.Stokes}.

\begin{theorem}
\label{t.reducti}
A section $u \in H^{\bs (1)} (\cC,F^i)$ is a solution of Cauchy problem (\ref{eq.Stokes})
if and only if it satisfies
\begin{equation}
\label{eq.reducti}
   u + \mathcal{P}_v (P N (u)) = \mathcal{P}_i (v_0) + \mathcal{P}_v (Pf)
\end{equation}
in $\cC$.
\end{theorem}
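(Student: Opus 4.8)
My plan is to prove the two implications by feeding the Green formula of Lemma~\ref{l.Greenhe} into the elimination scheme of Section~\ref{s.eliotpr}, the one algebraic input being that the Helmholtz projector $P$ commutes with $\iD$, hence with $\iD^m$. This follows from the properties of $H$ and $G$ recorded in the proof of Lemma~\ref{l.Helmholtz1}: since $HG = 0$, $AG = GA$, $A^\ast G = GA^\ast$ and $\iD H = H\iD = 0$, one gets $\iD P = P\iD = A^\ast A$ on $L^2 (\cX,F^i)$, and by iteration $\iD^m P = P\iD^m$. Consequently, whenever $A^\ast u = 0$ (equivalently $Pu = u$, by Lemma~\ref{l.Helmholtz2}), we have $P\iD^m u = \iD^m u$ and $P (\iD u) = \iD u$, so the differential equation in (\ref{eq.Stokes}) is then the same as $Lu + PN (u) = Pf$, with $L = \partial_t + \varepsilon \iD^m + \nu \iD$ the parabolic operator of Section~\ref{s.fsttheq}. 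I will also take for granted that $N (u) \in L^2 (\cC,F^i)$ for $u \in H^{\bs(1)} (\cC,F^i)$, which is where the hypothesis $2m - 1 \geq n/2$ enters through the Sobolev embeddings, so that $\mathcal{P}_v (PN (u))$ is defined and, by Lemma~\ref{l.bounopp}, lies in $H^{\bs(1)} (\cC,F^i)$.

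For the forward implication, let $u \in H^{\bs(1)} (\cC,F^i)$ solve (\ref{eq.Stokes}). By Lemma~\ref{l.Stokes}, $A^\ast u = 0$ in $\cC$, so $Pu = u$, and by the remark above the equation reads $Lu = Pf - PN (u)$ with $u (\cdot,0) = v_0$. I then substitute this into the potential form $u = \mathcal{P}_i (u (\cdot,0)) + \mathcal{P}_v (Lu)$ of Lemma~\ref{l.Greenhe} and use linearity of $\mathcal{P}_v$ to obtain $u = \mathcal{P}_i (v_0) + \mathcal{P}_v (Pf) - \mathcal{P}_v (PN (u))$, which is (\ref{eq.reducti}).

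For the converse, let $u \in H^{\bs(1)} (\cC,F^i)$ satisfy (\ref{eq.reducti}), written as $u = \mathcal{P}_i (v_0) + \mathcal{P}_v (P (f - N (u)))$. Recall from the proof of Lemma~\ref{l.bounopp} (and the construction of $\varPsi$ in Section~\ref{s.fsttheq}) that $\mathcal{P}_i (v_0)$ solves $L (\cdot) = 0$ with initial value $v_0$, while $\mathcal{P}_v (g)$ solves $L (\cdot) = g$ with zero initial value; applying $L$ to (\ref{eq.reducti}) termwise therefore gives $Lu = P (f - N (u))$, and taking the trace at $t = 0$ gives $u (\cdot,0) = v_0$. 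It remains to recover $A^\ast u = 0$. I apply $\Id - P$ to $Lu = P (f - N (u))$: since $(\Id - P) P = 0$ and $\Id - P$ commutes with $\iD^m$ and $\iD$, the section $w := (\Id - P) u$ satisfies $Lw = 0$ in $\cC$, while $w (\cdot,0) = (\Id - P) v_0 = 0$ because $A^\ast v_0 = 0$. The uniqueness of the solution of the parabolic Cauchy problem, the same fact invoked in Lemma~\ref{l.Greenhe}, forces $w = 0$, i.e.\ $Pu = u$. With $Pu = u$ in hand, $Lu = P (f - N (u))$ turns back into $\partial_t u + \varepsilon P\iD^m u + \nu P (\iD u) + PN (u) = Pf$, and together with $u (\cdot,0) = v_0$ this is (\ref{eq.Stokes}).

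The main obstacle I expect is not the formal chase but the bookkeeping of the function-space hypotheses: checking that $N (u)$ is square integrable on $\cC$ for $u \in H^{\bs(1)} (\cC,F^i)$, so that both sides of (\ref{eq.reducti}) live in the right space and $L$ may be applied termwise; and isolating precisely the maximal-regularity content of Lemma~\ref{l.bounopp} that makes $\mathcal{P}_v (g)$ a genuine $H^{\bs(1)}$-solution of $L (\cdot) = g$ rather than merely a distributional one. Once those points are pinned down, the equivalence reduces to the Green formula together with the projector identities used above.
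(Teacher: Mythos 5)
Your proposal is correct and follows essentially the same route as the paper: Lemma~\ref{l.Stokes} plus the commutation of $P$ with $\iD$ to rewrite the Cauchy problem as $Lu = P(f - N(u))$ and then the Green formula of Lemma~\ref{l.Greenhe} for necessity, and for sufficiency the observation that $L(u - Pu) = 0$ with zero initial value forces $Pu = u$ by parabolic uniqueness (the paper applies $P$ where you apply $\Id - P$, which is the same computation). Your extra verification that $\iD P = P\iD = A^\ast A$ and your remarks on $N(u) \in L^2(\cC,F^i)$ merely make explicit what the paper asserts without proof, so there is no substantive difference.
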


\begin{proof}
{\it Necessity\,}
Suppose that $u \in H^{\bs (1)} (\cC,F^i)$ is a solution of the Cauchy problem
$$
\begin{array}{rclll}
   u'_t + \varepsilon\, P \iD^m u + \nu\, P (\iD u) + P N (u)
 & =
 & Pf
 & \mbox{in}
 & \cC,
\\
   u (\cdot,0)
 & =
 & v_0
 & \mbox{on}
 & \cX
\end{array}
$$
in the cylinder.
Then $A^\ast u = 0$, which is due to Lemma~\ref{l.Stokes}, and so on applying
Lemma~\ref{l.Helmholtz2} we see that $Pu = u$.
Since the operators $\iD$ and $P$ commute, it follows that $Lu = Pf - P N (u)$ in $\cC$.
By (\ref{eq.Greenhe}),
$
   u = \mathcal{P}_i (v_0) + \mathcal{P}_v (Pf - P N (u))
$
in $\cC$, showing (\ref{eq.reducti}).

{\it Sufficiency\,}
Let $u \in H^{\bs (1)} (\cC,F^i)$ satisfy equation (\ref{eq.reducti}).
By the very definition of parabolic potentials, we get
$$
\begin{array}{rclll}
   Lu
 & =
 & Pf - P N (u)
 & \mbox{in}
 & \cC,
\\
   u (\cdot,0)
 & =
 & v_0
 & \mbox{on}
 & \cX.
\end{array}
$$
Since $v_0$ satisfies $A^\ast v_0 = 0$ on $\cX$, we conclude that $P v_0 = v_0$.
Therefore, applying the projection $P$ to both sides of the above equalities yields
   $L (u - Pu) = 0$ in $\cC$ and
   $(u - Pu) (\cdot,0) = 0$ on $\cX$.
By uniqueness, $Pu = u$ in the cylinder, and so $u$ is a solution of Cauchy problem
(\ref{eq.Stokes}).
\end{proof}

Note that (\ref{eq.reducti}) is a nonlinear integral equation of Volterra type for
the unknown section $u$ in $H^{\bs (1)} (\cC, F^i)$.
The nonlinear operator $N$ maps $H^{\bs (1)} (\cC,F^i)$ continuously and compactly into
$L^2 (\cC,F^i)$ while $P$ is a continuous mapping of $L^2 (\cC,F^i)$ into itself and
$\mathcal{P}_v$ maps $L^2 (\cC,F^i)$ continuously into $H^{\bs (1)} (\cX,F^i)$, the
latter being due to Lemma \ref{l.bounopp}.
Hence it follows that the composition $K = \mathcal{P}_v \circ P \circ N$ is a compact
continuous operator in $H^{\bs (1)} (\cC,F^i)$.
Thus, equation (\ref{eq.reducti}) is of the Leray-Schauder type $(\Id + K) u = f$, where
moreover $K$ is differentiable.
Our next concern will be to show that the mapping $\Id + K$ is actually a diffeomorphism
of $H^{\bs (1)} (\cC,F^i)$ for each $\varepsilon > 0$.

\section{Comments on the Navier-Stokes equations for quasicomplexes}
\label{eq.coNSefq}

For any complex (\ref{eq.ellcomp}), the composition $A^{i} A^{i-1}$ of any two neighbouring
operators vanishes.
Using the language of differential geometry one can think of the composition $A \circ A$ as
the curvature of (\ref{eq.ellcomp}).
In this way the complexes are flat sequences of topological vector spaces and their continuous
mappings.
Differential geometry gives a convincing motivation to those sequences which fail to be flat
but whose curvature is small in some sense.
Let alone the extended sequence of connections associated to any smooth vector bundle, whose
curvature proves to be a bundle morphism.
More generally, we call (\ref{eq.ellcomp}) a quasicomplex if each composition $A^{i} A^{i-1}$
is actually a pseudodifferential operator of order one.
Given any quasicomplex, the sequence of principal symbols still constitutes a complex, and so
the ellipticity survives under ``small'' perturbations of complexes.
The Laplacians
$
   \iD^i = A^i{}^\ast A^i + A^{i-1} A^{i-1}{}^\ast
$
of an elliptic quasicomplex are nonnegative symmetric second order elliptic operators between
sections of vector bundles $F^i$, for $i = 0, 1, \ldots, N$.
Therefore, we are in a position to introduce the generalised Navier-Stokes equations also for
elliptic quasicomplexes in much the same way as in (\ref{eq.mhdefivf}).
Note, however, that neither $A$ nor $A^\ast$ commute with $\iD$, and so the elimination of the
pressure is no longer possible using the Neumann problem after Spencer, as it is done in
   Section \ref{s.eliotpr}.

\part{Solvability of regularised Navier-Stokes equations}
\label{p.sorNSeq}

We now return to problem (\ref{eq.gNSefivf}).
For this problem it is possible to prove global existence and uniqueness of the weak as
well as strong solutions.
The characterisation of weak solutions is analogous to that of \cite{Hopf51}.
The description of strong solutions is similar to that of \cite{KiseLady57}.
The characterisation of strong solutions to the conventional system in  \cite{KiseLady57}
can be improved using the eigenfunctions of the Stokes operator
$$
   \Big( \begin{array}{cc}
           \nu \iD
         & A
\\
           A^\ast
         & 0
         \end{array}
   \Big)
$$
and taking the estimates of \cite{Prod62} but we will not develop this point here.
A regularisation similar to (\ref{eq.gNSefivf}) was first studied by Ladyzhenskaya in
\cite{Lady62}.
She established a convergence theorem for the strong solutions of this system to those
of the Navier-Stokes equation as the regularisation parameter tends to zero.
Lions proposed a regularisation of the Navier-Stokes equations by adding a higher order
viscosity term $\varepsilon (- \iD)^m u$ and requiring the solution to vanish up to
order $m$ on the lateral boundary of the cylinder, see Remark 6.11 in \cite{Lion69}.
For this problem he proved the global existence of unique weak solutions.
The paper \cite{Veig85a} concerns regularisations of this type in the whole space $\R^n$
and establishes convergence theorems as the regularisation parameter approaches zero,
see also \cite{Veig85b}.

\section{Governing equations and functional framework}
\label{s.geaffra}

To proceed we first make more precise our assumptions on the structure of nonlinearity in
problem (\ref{eq.gNSefivf}).
Namely, we consider $N (u,v) := T (v) u$ for differentiable sections $u$ und $v$ of $F^i$,
where $T$ is a first order differential operator on $\cX$ mapping sections of $F^i$ into
sections of the bundle $\mathrm{Hom}\, F^i$ of homomorphisms of $F^i$.
Our standing assumption on the differential operators $T$ under study is that there is a
sesquilinear mapping $Q (v,w)$ of $C^\infty (\cX, F^i) \times C^\infty (\cX, F^i)$ into
$C^\infty (\cX,F^{i-1})$, such that
   $(T (v))^\ast v = A Q (v,v)$
for all smooth sections $v$ of $F^i$.
For the conventional nonlinearity, we get $T (v) = v'_x$ and
$$
   Q (v,w) = \frac{1}{2}\, (v,w)_x,
$$
as is easy to check.
As usual, we abbreviate $N (u,u)$ to $N (u)$, and similarly for $Q (v,v)$.

\begin{lemma}
\label{l.algestr}
As defined above, the differential operator $T$ satisfies the anticommutativity relation
\begin{equation}
\label{eq.algestr}
   (T (v))^\ast w + (T (w))^\ast v
 = A \left( Q (v) - Q (v-w) + Q (w) \right)
\end{equation}
for all $v, w \in C^\infty (\cX,F^i)$.
\end{lemma}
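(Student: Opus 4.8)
The plan is to obtain (\ref{eq.algestr}) from the standing hypothesis $(T(v))^{\ast} v = A Q(v,v)$ by polarisation. The first step is a purely algebraic observation: for all $v, w \in C^\infty (\cX, F^i)$ one has
$$
   Q(v) - Q(v-w) + Q(w) = Q(v,w) + Q(w,v),
$$
which follows at once from the sesquilinearity of $Q$ — in particular from its additivity in each of the two arguments — once one expands $Q(v-w,v-w) = Q(v,v) - Q(v,w) - Q(w,v) + Q(w,w)$. Hence it suffices to prove the equivalent identity
$$
   (T(v))^{\ast} w + (T(w))^{\ast} v = A \left( Q(v,w) + Q(w,v) \right).
$$

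For this I would apply the hypothesis $(T(u))^{\ast} u = A Q(u,u)$ to the section $u = v + w$. Since $T$ is a linear first order differential operator, $T(v+w) = T(v) + T(w)$, and since the fibrewise adjoint on $\mathrm{Hom}\, F^i$ is additive, $(T(v+w))^{\ast} = (T(v))^{\ast} + (T(w))^{\ast}$. Expanding the left-hand side, and using the additivity of $Q$ in each slot together with the linearity of $A$ on the right-hand side, the identity for $u = v+w$ reads
$$
   (T(v))^{\ast} v + (T(v))^{\ast} w + (T(w))^{\ast} v + (T(w))^{\ast} w = A Q(v,v) + A Q(v,w) + A Q(w,v) + A Q(w,w).
$$
Subtracting the two instances $(T(v))^{\ast} v = A Q(v,v)$ and $(T(w))^{\ast} w = A Q(w,w)$ of the hypothesis cancels the ``pure'' terms and leaves precisely $(T(v))^{\ast} w + (T(w))^{\ast} v = A(Q(v,w) + Q(w,v))$, which combined with the algebraic identity of the previous paragraph is (\ref{eq.algestr}).

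There is no genuine analytic difficulty here: the whole computation is carried out fibrewise and amounts to bilinear bookkeeping. The one point that requires care is to invoke nothing beyond additivity of the maps $v \mapsto T(v)$, $B \mapsto B^{\ast}$ and $Q$, rather than any $\mathbb{C}$-homogeneity — the fibrewise adjoint being conjugate-linear — which is exactly why the polarisation is performed with the single combination $v+w$, and why it recovers the symmetric sum $Q(v,w) + Q(w,v)$ rather than the individual terms. A routine check of the signs in the expansion of $Q(v-w,v-w)$, the source of the middle term $-Q(v-w)$ on the right of (\ref{eq.algestr}), then completes the argument.
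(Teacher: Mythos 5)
Your proof is correct and is essentially the paper's argument: both polarise the quadratic hypothesis $(T(u))^\ast u = AQ(u,u)$ using only the additivity of $T$, the fibrewise adjoint and $Q$. The paper simply polarises directly with $v-w$, writing $(T(v))^\ast w + (T(w))^\ast v = (T(v))^\ast v - (T(v-w))^\ast(v-w) + (T(w))^\ast w$ in one step, whereas you route through $v+w$ and then verify the elementary identity $Q(v)-Q(v-w)+Q(w)=Q(v,w)+Q(w,v)$; the two computations are interchangeable.
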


\begin{proof}
Since $T$ is linear, it follows that
\begin{eqnarray*}
   (T (v))^\ast w + (T (w))^\ast v
 & = &
   (T (v))^\ast v - (T (v-w))^\ast (v-w) + (T (w))^\ast w
\\
 & = &
   A \left( Q (v) - Q (v-w) + Q (w) \right),
\end{eqnarray*}
as desired.
\end{proof}

The following lemma clarifies why (\ref{eq.algestr}) is referred to as an anticommutativity
relation.

\begin{corollary}
\label{c.algestr}
Assume that $u \in C^\infty (\cX, F^i)$ satisfies $A^\ast u = 0$ and has zero Cauchy data on
the boundary of $\cX$ with respect to the differential operator $A^\ast$.
Then, one has
$$
   (N (u,v), w) = - (N (u,w), v)
$$
for all $v, w \in C^\infty (\cX,F^i)$.
\end{corollary}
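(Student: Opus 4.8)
The plan is to derive the asserted anticommutativity directly from the algebraic identity of Lemma~\ref{l.algestr} combined with Green's formula for the pair $(A, A^{\ast})$. The key observation is that for each fixed section $v$ the expression $T (v)$ is a bundle homomorphism of $F^i$ (the differentiations in $T$ fall on $v$, not on the argument), so its formal adjoint $(T (v))^{\ast}$ is just the pointwise Hermitian adjoint endomorphism. Consequently no integration by parts is involved in writing
\[
   (N (u,v), w) + (N (u,w), v)
 = (T (v)\, u, w) + (T (w)\, u, v)
 = \big( u,\; (T (v))^{\ast} w + (T (w))^{\ast} v \big)
\]
in $L^2 (\cX,F^i)$, valid for all $v, w \in C^\infty (\cX,F^i)$.

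Next I would feed Lemma~\ref{l.algestr} into the right-hand side. By the anticommutativity relation $(\ref{eq.algestr})$,
\[
   (T (v))^{\ast} w + (T (w))^{\ast} v = A \pi,
 \qquad
   \pi := Q (v) - Q (v-w) + Q (w) \in C^\infty (\cX,F^{i-1}),
\]
so the quantity above equals $( u, A \pi )_{L^2 (\cX,F^i)}$. Now Green's formula for the first order operator $A$ and its formal adjoint $A^{\ast}$ gives
\[
   ( u, A \pi )_{L^2 (\cX,F^i)}
 = ( A^{\ast} u, \pi )_{L^2 (\cX,F^{i-1})}
\]
up to a boundary integral over $\partial \cX$ whose integrand is a bilinear form in the Cauchy data (here the traces) of $u$ and $\pi$, hence linear in the Cauchy data of $u$ relative to $A^{\ast}$. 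Since $A^{\ast} u = 0$ in $\cX$ the first term on the right vanishes, and the boundary term vanishes too because the Cauchy data of $u$ with respect to $A^{\ast}$ are zero on $\partial \cX$ --- and it is absent altogether when $\cX$ is a closed manifold. This yields $(N (u,v), w) + (N (u,w), v) = 0$, which is the claim.

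The computation is short and the only place that calls for care is the boundary term: one has to make explicit that for the first order operator $A^{\ast}$ the relevant Cauchy data reduces to the trace on $\partial \cX$ and that the Green form depends linearly on it, so that the hypothesis that $u$ has zero Cauchy data with respect to $A^{\ast}$ does make the boundary contribution disappear. In the closed-manifold setting that occupies the rest of the paper this step is vacuous, and the argument collapses to the polarisation step above followed by $( u, A \pi ) = ( A^{\ast} u, \pi ) = 0$.
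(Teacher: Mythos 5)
Your proposal is correct and follows essentially the same route as the paper: polarise via Lemma \ref{l.algestr} to write $(N(u,v),w)+(N(u,w),v)=(u,A(Q(v)-Q(v-w)+Q(w)))$ and then kill this term using $A^\ast u=0$ together with the vanishing Cauchy data (vacuous on a closed manifold). Your explicit discussion of the Green formula and the boundary term only spells out what the paper leaves implicit in its final equality.
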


\begin{proof}
Indeed, we get
\begin{eqnarray*}
   (N (u,v), w) + (N (u,w), v)
 & = &
   (T (v)\, u, w) + (T (w)\, u, v)
\\
 & = &
   (u, (T (v))^\ast w) + (u, (T (w))^\ast v)
\\
 & = &
   (u, A \left( Q (v) - Q (v-w) + Q (w) \right))
\\
 & = &
   0,
\end{eqnarray*}
as desired.
\end{proof}

The equality $(N (u,v), w) = - (N (u,w), v)$ is actually a crucial property characterising
those nonlinear terms $N (u,v)$ which are permitted in this work.
In particular, we see that $(N (u), u) = 0$ holds for any smooth solenoidal section $u$ of
the bundle $F^i$.

\begin{example}
\label{e.algestr}
For the conventional Navier-Stokes equations on a compact Riemannian manifold with boundary
$\cX$ one chooses $N (u,v) = \partial_u v$, see \cite{EbinMars70}.
Here, we interpret $u$ and $v$ as one-forms on $\cX$ and $\partial$ stands for a connection
on (the tangent bundle of) the manifold, so that $\partial_u v$ is the derivative of $v$ in
the direction $u$.
By definition we get $\partial_u v = \iota (u) \partial v$, where by $\iota (u)$ is meant
the interior multiplication by $u$.
Show that if $\partial$ is compatible with the Riemannian metric of $\cX$, then the equality
$(N (u,v), w) = - (N (u,w), v)$ is fulfilled.
Indeed, let $u$ be a differentiable one-form on $\cX$ satisfying $d^\ast u = 0$ in $\cX$ and
whose normal part on the boundary vanishes.
Then
\begin{eqnarray*}
   (N (u,v), w)
 & = &
   \int_{\cX} (N (u,v),w)_x dx
\\
 & = &
   \int_{\cX} (\iota (u) \partial v, w)_x dx
\\
 & = &
   \int_{\cX} \iota (u)\, (\partial v, w)_x dx
\\
 & = &
   \int_{\cX} \iota (u) \big( (\partial v, w)_x + (v, \partial w)_x \big) dx
 - \int_{\cX} \iota (u)\, (v, \partial w)_x dx
\\
 & = &
   \int_{\cX} \iota (u)\, d (v, w)_x dx
 - \int_{\cX} \iota (u)\, (\partial w, v)_x dx
\\
 & = &
   \int_{\cX} d^\ast u\, (v, w)_x dx
 - (N (u,w),v)
\\
 & = &
   -\, (N (u,w), v)
\end{eqnarray*}
for all $v, w \in \varOmega^1 (\cX)$, as desired.
In particular, this is the case for the Levi-Civita connection on $\cX$.
\end{example}

Denote by $\mathcal{S}$ the space of all sections $u \in C^\infty (\cX,F^i)$ which vanish
up to the infinite order on the boundary of $\cX$ and satisfy $A^{\ast} u = 0$ in $\cX$.
Note that while studying the problem for compact closed manifolds we present the framework
to suit to compact manifolds with boundary as well.
Let $H$ stand for the closure of $\mathcal{S}$ in $L^2 (\cX, F^i)$.
When equipped with the induced unitary structure, $H$ is a Hilbert space.
The elements of $H$ are still solenoidal and inherit the property $n (u) = 0$ on the boundary
of $\cX$ in a weak sense, $n (u)$ being the Cauchy data of $u$ with respect to $A^\ast$.
Furthermore, we write $S = S^m$ for the closure of $\mathcal{S}$ in $H^{m} (\cX,F^i)$.
Using spectral synthesis in Sobolev spaces one sees that $S$ coincides with the subspace of
$H^{m} (\cX,F^i)$ consisting of those solenoidal sections $u$ which vanish up to order $m$
on the boundary of $\cX$.

\begin{lemma}
\label{l.Dirichl}
The norm in $S$ inherited from $H^m (\cX,F^i)$ is equivalent to that determined by the
energy (or Dirichlet) inner product
\begin{equation}
\label{eq.Dirichl}
   D (u,v)
 = \varepsilon\, (\iD^{m/2} u, \iD^{m/2} v)
 + \nu \left( (Au, Av) + (A^\ast u, A^\ast v) \right)
 + (u,v)
\end{equation}
for $u, v \in S$.
\end{lemma}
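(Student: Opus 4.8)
The plan is to establish a two-sided estimate between the two norms on $S$. Since the inclusion is between Hilbert spaces and the energy form $D(u,u)$ manifestly dominates $\| u \|^2_{L^2(\cX,F^i)}$, the substance of the claim is the equivalence
$$
   c_1\, \| u \|^2_{H^m (\cX,F^i)}
 \leq
   D (u,u)
 \leq
   c_2\, \| u \|^2_{H^m (\cX,F^i)}
$$
for all $u \in S$, with constants depending on $\varepsilon, \nu$ but not on $u$. The upper bound is the routine direction: each of $\iD^{m/2} u$, $Au$, $A^\ast u$ and $u$ is obtained from $u$ by applying a (pseudo)differential operator of order at most $m$, $1$, $1$, $0$ respectively, and all of them are therefore bounded by $\| u \|_{H^m}$ up to a constant. (Here $\iD^{m/2}$ is understood in the sense already fixed in the paper; if $m$ is odd one simply regroups, writing $D(u,u)$ with the summand $\varepsilon\, (\iD^m u, u)$ in place of $\varepsilon\, \| \iD^{m/2} u \|^2$, which changes nothing.)

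The lower bound is where the work lies, and I would argue it in two steps. First, I would show a Gårding-type inequality: because $\iD = A^\ast A + A A^\ast$ is a nonnegative elliptic pseudodifferential operator of order two on sections of $F^i$ over the compact manifold $\cX$, its $m$-th power $\iD^m$ is elliptic of order $2m$ and nonnegative, so the standard elliptic a priori estimate gives
$$
   \| u \|^2_{H^m (\cX,F^i)}
 \leq
   C \left( (\iD^m u, u) + \| u \|^2_{L^2 (\cX,F^i)} \right)
$$
for all $u$ in the appropriate domain — in particular for $u \in \mathcal{S}$, hence for $u \in S$ by density once both sides are seen to be continuous in the $H^m$-norm. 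The key point enabling this clean form (rather than an estimate with $\| u \|_{H^{2m}}$ on the left) is that $\iD^m$ is formally selfadjoint and nonnegative, so $(\iD^m u, u) = \| \iD^{m/2} u \|^2$ controls the top-order part of the $H^m$-norm; this is exactly the Gårding inequality for the elliptic form $u \mapsto (\iD^m u, u)$, and for sections of $\mathcal{S}$ — which vanish to infinite order at $\partial\cX$ — the integration by parts producing it involves no boundary contributions. Combining this with the trivial bound $\| u \|^2_{L^2} \leq D(u,u)/\min\{1,\nu\}$ and with $\varepsilon\,(\iD^m u,u) \leq D(u,u)$ (all three summands in $D$ being nonnegative, the last two by $(\iD u, u) = \| Au \|^2 + \| A^\ast u\|^2 \geq 0$), we obtain $\| u \|^2_{H^m} \leq C'\, D(u,u)$ with $C'$ depending on $\varepsilon, \nu$.

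The main obstacle is therefore the justification of the Gårding estimate in the precise form with only a zero-order remainder, together with the density passage: one must check that $\mathcal{S}$ — and hence its $H^m$-closure $S$ — lies in the form domain of $\iD^m$ and that $(\iD^m u, u)$ extends continuously to $S$. For $u \in \mathcal{S}$ this is immediate since $\mathcal{S} \subset C^\infty$. For the extension one uses that, on $\mathcal{S}$, repeated integration by parts (licit because all derivatives of $u$ vanish on $\partial\cX$) yields $(\iD^m u, u) = \| \iD^{m/2} u \|^2$ when $m$ is even and $(\iD^{(m-1)/2} Au, \iD^{(m-1)/2} Au) + (\iD^{(m-1)/2} A^\ast u, \iD^{(m-1)/2} A^\ast u)$ when $m$ is odd, each manifestly a continuous quadratic form in the $H^m$-norm; hence it extends to all of $S$. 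Once the Gårding inequality is in hand the equivalence follows, and the lemma is proved; the remark about using the characterisation of $S$ as the solenoidal sections of $H^m(\cX,F^i)$ vanishing to order $m$ on $\partial\cX$ (established just before the lemma via spectral synthesis) is precisely what guarantees that this density argument stays inside $S$.
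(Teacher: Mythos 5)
Your proposal is correct and is essentially the paper's own argument spelled out: the paper disposes of the lemma in one line by invoking the ellipticity of the Dirichlet problem for $V = \varepsilon \iD^m + \nu \iD$ in the Sobolev setting, and the G\aa rding-type coercivity estimate you derive (together with the integration by parts on $\mathcal{S}$, where boundary terms vanish, and the density passage to $S$) is precisely the content of that assertion. Nothing in your write-up deviates from, or adds a gap to, the intended route.
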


\begin{proof}
This follows immediately from the fact that the Dirichlet problem for the differential
operator
   $V = \varepsilon \iD^m + \nu \iD$
is elliptic in the classical setting of Sobolev spaces.
\end{proof}

The space $S$ is contained in $H$, it is dense in $H$, and the embedding is continuous.
Let $H'$ and $S'$ denote the dual spaces of $H$ and $S$, respectively, and let $\iota$
denote the embedding of $S$ into $H$.
The transposed operator $\iota'$ maps $H'$ continuously into $S'$, and
  it is one-to-one since $\iota (S) = S$ is dense in $H$, and
  $\iota' (H')$ is dense in $S'$ since $\iota$ is one-to-one.
Moreover, by the Riesz representation theorem, we can identify $H$ and $H'$, and so we arrive
at the continuous inclusions
$
   S \hookrightarrow H \cong H' \hookrightarrow S',
$
each space being dense in the subsequent one.
As a consequence of these identifications, the inner product in $H$ can be specified within
the pairing of $S'$ and $S$.
More precisely, there is a sesquilinear pairing $S' \times S \to \mathbb{C}$ (for which we
continue to write $(f,v)$), such that
$$
   (f, \iota (v)) = \overline{\langle \iota' \ast_H f, v \rangle}
$$
for all
   $f \in H$ and
   $v \in S$,
where
   $\ast_H : H \to H'$
is the conjugate linear isomorphism given by the Riesz theorem.

For each $u \in S$, the form $v \to D (u,v)$ is conjugate linear and continuous on $S$.
Therefore, there is an element of $S'$ (which we denote by $(V + \Id) u$) with the property
that
\begin{equation}
\label{eq.Frieext}
   ((V + \Id) u,v) = D (u,v)
\end{equation}
for all $v \in S$.
In this way we get what is known as the Friedrichs extension of the differential operator
   $\varepsilon \iD^m + \nu \iD$,
cf. \cite{AlfoSimo80}.
The mapping $u \mapsto (V + \Id) u$ is linear and continuous and so is $u \mapsto Vu$.
Moreover, $V + \Id$ is an isomorphism from $S$ onto $S'$.

Let $\mathcal{B}$ be any Banach space.
For a finite real number $p \geq 1$, we denote by $L^{p} (I,\mathcal{B})$ the space of all
measurable functions $u$ on the interval $I$ with values in $\mathcal{B}$, such that
\begin{equation}
\label{eq.Bochner}
   \Big( \int_I \| u (t) \|_{\mathcal{B}}^p\, dt \Big)^{1/p} < \infty.
\end{equation}
This space is Banach under the norm (\ref{eq.Bochner}).
The space $L^\infty (I,\mathcal{B})$ is defined to consist of all essentially bounded
functions $u$ of $t \in I$ with values in $\mathcal{B}$.
As usual, this space is given the norm obtained from (\ref{eq.Bochner}) by passing to the
limit as $p \to \infty$.
It is a Banach space.

By $C (\overline{I},\mathcal{B})$ is meant the space of all continuous functions on $I$ with
values in $\mathcal{B}$.
If $I$ is bounded, the space is topologised under norm induced by the embedding into
$L^\infty (I,\mathcal{B})$, i.e.,
$$
   \| u \|_{C (\overline{I},\mathcal{B})}
 = \sup_{t \in I} \| u (t) \|_{\mathcal{B}}.
$$

The following technical lemma concerns the derivatives of functions with values in Banach
spaces.

\begin{lemma}
\label{l.primiti}
Assume $U$ and $u$ are two functions of $L^1 (I,\mathcal{B})$.
Then, the following are equivalent:

1)
$U$ is a.e. equal to a primitive function of $u$, i.e.,
$
   \displaystyle
   U (t) = U_0 + \int_0^t u (t') dt'
$
for a.a. $t \in I$.

2)
$U' = u$ weakly in $I$, i.e.,
$
   \displaystyle
   \int_I U (t) \phi' (t) dt = - \int_I u (t) \phi (t) dt
$
is valid for each $\phi \in C^\infty_{\mathrm{comp}} (0,T)$.

3)
For each $f \in \mathcal{B}'$, the equality
$
   \displaystyle
   \frac{d}{dt} \langle f, U \rangle = \langle f, u \rangle
$
holds weakly in $I$.
\end{lemma}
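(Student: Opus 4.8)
The plan is to prove the cyclic chain of implications $1) \Rightarrow 2) \Rightarrow 3) \Rightarrow 1)$.

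For $1) \Rightarrow 2)$ I would substitute the primitive $U (t) = U_0 + \int_0^t u (t')\, dt'$ into the left-hand side of the identity in 2). The constant $U_0$ contributes $U_0 \int_I \phi' (t)\, dt = 0$, and to the remaining double integral $\int_0^T \big( \int_0^t u (t')\, dt' \big) \phi' (t)\, dt$ one applies Fubini's theorem, legitimate because $u \in L^1 (I,\mathcal{B})$ and $\phi'$ is bounded. Interchanging the order of integration and using that $\phi$ vanishes near the endpoints of $I$, so that $\int_{t'}^T \phi' (t)\, dt = - \phi (t')$, produces precisely $- \int_I u (t') \phi (t')\, dt'$.

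For $2) \Rightarrow 3)$ let $f \in \mathcal{B}'$. Since a continuous linear functional commutes with the Bochner integral, applying $f$ to the vector identity of 2) gives $\int_I \langle f, U (t) \rangle\, \phi' (t)\, dt = - \int_I \langle f, u (t) \rangle\, \phi (t)\, dt$ for every $\phi \in C^\infty_{\mathrm{comp}} (0,T)$. The estimate $|\langle f, U (t) \rangle| \leq \| f \|\, \| U (t) \|_{\mathcal{B}}$ (and likewise for $u$) shows that $\langle f, U (\cdot) \rangle$ and $\langle f, u (\cdot) \rangle$ belong to $L^1 (I)$, and the displayed identity is exactly the assertion that $\frac{d}{dt} \langle f, U \rangle = \langle f, u \rangle$ weakly in $I$.

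The real work is in $3) \Rightarrow 1)$. Fix $f \in \mathcal{B}'$. By 3) the scalar function $\langle f, U (\cdot) \rangle \in L^1 (I)$ is weakly differentiable with derivative $\langle f, u (\cdot) \rangle \in L^1 (I)$, so the classical scalar version of the present lemma supplies a constant $c_f$ with $\langle f, U (t) \rangle = c_f + \int_0^t \langle f, u (t') \rangle\, dt'$ for almost every $t$; since $f$ passes through the Bochner integral, the right-hand side equals $c_f + \langle f, \int_0^t u (t')\, dt' \rangle$. Thus the function $W (t) := U (t) - \int_0^t u (t')\, dt'$, which lies in $L^1 (I,\mathcal{B})$, satisfies $\langle f, W (t) \rangle = c_f$ for a.e.\ $t$, for each fixed $f \in \mathcal{B}'$. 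The point still to be argued --- and the only genuine obstacle --- is that $W$ then agrees a.e.\ with a single vector $U_0 \in \mathcal{B}$, the difficulty being that the exceptional null set a priori depends on $f$. I would resolve this by invoking the Pettis measurability theorem: $W$, being Bochner integrable, is essentially separably valued, so after replacing $\mathcal{B}$ by the closed separable subspace spanned by the essential range of $W$ we may assume $\mathcal{B}$ separable and fix a countable family $\{ f_n \} \subset \mathcal{B}'$ with $\| x \|_{\mathcal{B}} = \sup_n |\langle f_n, x \rangle|$ for all $x \in \mathcal{B}$. Each $f_n$ gives a null set $E_n$ outside which $\langle f_n, W (t) \rangle = c_{f_n}$; for $t, s$ outside $E := \bigcup_n E_n$ we get $\langle f_n, W (t) - W (s) \rangle = 0$ for every $n$, hence $W (t) = W (s)$. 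Fixing such an $s$ and writing $U_0$ for the common value yields $U (t) = U_0 + \int_0^t u (t')\, dt'$ for a.e.\ $t$, which is 1). Throughout I would treat the scalar case of the lemma and the interchange of continuous functionals with Bochner integrals as standard facts to be cited rather than reproved.
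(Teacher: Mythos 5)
Your proof is correct. The paper itself does not prove this lemma but simply cites Temam's book, and your argument — Fubini for $1)\Rightarrow 2)$, applying functionals through the Bochner integral for $2)\Rightarrow 3)$, and for $3)\Rightarrow 1)$ reducing to the scalar case and then upgrading the $f$-dependent null sets via essential separable-valuedness (Pettis) and a countable norming family — is precisely the standard proof found in that reference.
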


If one of the conditions 1)-3) is satisfied, then $u$ is, in particular, a.e. equal to
a continuous function on $I$ with values in $\mathcal{B}$.

\begin{proof}
See for instance \cite[p.~169]{Tema79}.
\end{proof}

\section{The basic linear problem}
\label{s.basiclp}

The basic linear problem relating to the generalised Navier-Stokes equations of (\ref{eq.mhdefivf})
consists in finding sections
   $u$ and
   $p$
of $F^i$ and $F^{i-1}$ over $\cC$, respectively, such that
\begin{equation}
\label{eq.basiclp}
\begin{array}{rcl}
   \partial_t u + (\varepsilon \iD^m + \nu \iD) u + Ap
 & =
 & f,
\\
   A^\ast u
 & =
 & 0
 \end{array}
\end{equation}
in the cylinder $\cC$ and
\begin{equation}
\label{eq.mixedbc}
\begin{array}{rclcl}
   u (\cdot,0)
 & =
 & u_0
 & \mbox{on}
 & \cX,
\\
   u
 & =
 & 0
 & \mbox{up to order $m$ on}
 & \partial \cX \times I,
 \end{array}
\end{equation}
where
   $f$ is a section of $F^i$ over the cylinder and
   $u_0$ a section of $F^i$ over its bottom $\cX$.
Conditions (\ref{eq.mixedbc}) specify the first mixed problem for evolution equations
(\ref{eq.basiclp}).
In the case of conventional Navier-Stokes equations this problem is referred to as the
Stokes problem.

Let $(u,p)$ be a classical solution of problem (\ref{eq.basiclp}), (\ref{eq.mixedbc}),
say
   $u \in C^{2m} (\overline{\cC},F^i)$ and
   $p \in C^1 (\overline{\cC},F^{i-1})$.
Then
$$
   (u'_t, v) + (Vu, v) = (f, v)
$$
for each element $v$ of $\mathcal{S}$, as is easy to see.
By continuity, this equation holds also for each $v \in S$.
Since
   $(u'_t, v) = \partial_t\, (u,v)$,
we are led to the following weak formulation of problem (\ref{eq.basiclp}), (\ref{eq.mixedbc}).
Given sections
   $f \in L^2 (I,S')$ and
   $u_0 \in H$,
find $u \in L^2 (I,S)$ satisfying
\begin{equation}
\label{eq.Stokesw}
\begin{array}{rclcl}
   \partial_t\, (u,v) + (Vu,v)
 & =
 & (f,v)
 & \mbox{for all}
 & v \in S,
\\
   u (0)
 & =
 & u_0.
 &
 &
 \end{array}
\end{equation}
(By (\ref{eq.Stokes}), any solution to this problem leads to a solution in some weak
 sense of problem (\ref{eq.basiclp}), (\ref{eq.mixedbc}).)

For $u$ belonging to $L^2 (I,S)$, the condition $u (0) = u_0$ need not make any sense in
general.
However, if $u \in L^2 (I,S)$ satisfies the first equation of (\ref{eq.Stokesw}), then
the value $u (0)$ is well defined.
Indeed, we can write the first equality of (\ref{eq.Stokesw}) as
$
   \partial_t\, (u,v) = (f - Vu,v)
$
for all $v \in S$.
Since $V$ is linear and continuous from $S$ into $S'$ and $u \in L^2 (I,S)$, the function
$Vu$ belongs to $L^2 (I,S')$.
Hence it follows that $f - Vu \in L^2 (I,S')$, and Lemma \ref{l.primiti} shows that
   $u' \in L^2 (I,V')$
and that $u$ is a.e. equal to an (absolutely) continuous function on $I$ with values in
$S'$.
Thus, any function of $L^2 (I,S)$ satisfying the first equation of (\ref{eq.Stokesw}) is,
after modification on a set of measure zero, a continuous mapping of $I$ into $S'$, and
so the condition $u (0) = u_0$ makes sense.

If $f \in L^2 (I,S')$ and $u \in L^2 (I,S)$ satisfies the first equation of (\ref{eq.Stokesw})
then, as observed, $u' \in L^2 (I,S')$ and
$
   \partial_t\, (u,v) = (f - Vu,v)
$
for all $v \in S$.
According to Lemma \ref{l.primiti} this equality is equivalent to
$
   u' + Vu = f
$
a.e. on $I$.
Conversely, if $u$ satisfies the latter equation, then $u$ obviously satisfies the first
equation of (\ref{eq.Stokesw}).
In this way we arrive at an alternative formulation of weak problem
   (\ref{eq.Stokesw}).
Namely, given sections
   $f \in L^2 (I,S')$ and
   $u_0 \in H$,
find
   $u \in L^2 (I,S)$, such that
   $u' \in L^2 (I,S')$
and
\begin{equation}
\label{eq.Stokesp}
\begin{array}{rclcl}
   u' + Vu
 & =
 & f
 & \mbox{a.e. on}
 & I,
\\
   u (0)
 & =
 & u_0.
 &
 &
 \end{array}
\end{equation}

\begin{theorem}
\label{t.exiuniS}
For any data
   $f \in L^2 (I,S')$ and
   $u_0 \in H$,
there exists a unique function
   $u \in L^2 (I,S)$
which satisfies
   $u' \in L^2 (I,S')$
and (\ref{eq.Stokesp}).
Moreover, we get $u \in C (\overline{I},H)$.
\end{theorem}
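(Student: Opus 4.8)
The plan is to treat problem (\ref{eq.Stokesp}) as a linear parabolic Cauchy problem in the Gelfand triple $S \hookrightarrow H \cong H' \hookrightarrow S'$ and to solve it by the Faedo--Galerkin method, much as in \cite{Tema79} or \cite{Lion69}. The only structural facts needed are that, by Lemma~\ref{l.Dirichl}, the Dirichlet form $D$ of (\ref{eq.Dirichl}) is bounded and coercive on $S$, i.e. $c\, \| v \|_S^2 \le D (v,v) \le C\, \| v \|_S^2$ for all $v \in S$, and that $V + \Id$ realises an isomorphism $S \to S'$ with $((V + \Id) u, v) = D (u,v)$.

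First I would fix a spectral basis. Since $S \subset H^m (\cX, F^i)$ embeds compactly into $H$ by the Rellich theorem, the inverse of $V + \Id$ is a compact positive selfadjoint operator on $H$, so there is an orthonormal basis $\{ e_k \}$ of $H$ with $e_k \in S$ and $D (e_k, v) = \lambda_k\, (e_k, v)$ for all $v \in S$, where $1 \le \lambda_1 \le \lambda_2 \le \ldots \to \infty$. The vectors $\lambda_k^{-1/2} e_k$ form an orthonormal basis of $S$ for the inner product $D$; hence the orthogonal projection $P_m$ of $H$ onto $V_m := \mathrm{span}\, \{ e_1, \ldots, e_m \}$ is simultaneously the orthogonal projection of $(S,D)$ onto $V_m$, and $\| P_m v \|_S \le C\, \| v \|_S$ uniformly in $m$. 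One then looks for $u_m (t) = \sum_{k=1}^m g_{km} (t)\, e_k$ satisfying $(u_m', e_k) + (V u_m, e_k) = (f, e_k)$ for $k = 1, \ldots, m$ and $u_m (0) = P_m u_0$. This is a linear first order ODE system with $L^2$ right-hand side in $t$, hence uniquely solvable with each $g_{km}$ absolutely continuous on $\overline I$. (An equivalent route is to expand $f$ and $u_0$ in the basis $\{ e_k \}$ and write the solution of (\ref{eq.Stokesp}) componentwise by the variation of constants formula, the requisite summability of the resulting series following from Young's inequality for convolutions and the smoothing factors $e^{-\mu_k t}$, where $\mu_k = \lambda_k - 1 \ge 0$ are the eigenvalues of $V$.)

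Next I would derive the a priori estimates. Multiplying the $k$-th Galerkin equation by $\overline{g_{km} (t)}$ and summing over $k$ yields $\tfrac12\, \tfrac{d}{dt}\, \| u_m \|_H^2 + D (u_m, u_m) = \| u_m \|_H^2 + \mathrm{Re}\, (f, u_m)$; estimating $|(f, u_m)| \le \| f \|_{S'} \| u_m \|_S$, absorbing $\| u_m \|_S$ by coercivity and Young's inequality, and then applying Gronwall's lemma on $I$, one bounds $\{ u_m \}$ in $L^\infty (I, H) \cap L^2 (I, S)$ in terms of $\| u_0 \|_H$ and $\| f \|_{L^2 (I, S')}$ alone. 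For the time derivatives I would use that $u_m' \in V_m$, so that for every $v \in S$ one has $(u_m', v) = (u_m', P_m v) = (f - V u_m, P_m v)$, whence $\| u_m' \|_{S'} \le C\, (\| f \|_{S'} + \| u_m \|_S)$ and $\{ u_m' \}$ is bounded in $L^2 (I, S')$.

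Finally I would pass to the limit. Along a subsequence, $u_m \rightharpoonup u$ in $L^2 (I, S)$, $u_m \to u$ weakly-$\ast$ in $L^\infty (I, H)$ and $u_m' \rightharpoonup u'$ in $L^2 (I, S')$; testing the Galerkin relations against $\phi (t)\, e_k$ with $\phi \in C^\infty_{\mathrm{comp}} (0,T)$ and using the density of $\bigcup_m V_m$ in $S$ gives $u' + V u = f$ in $L^2 (I, S')$, hence a.e. on $I$. The decisive functional-analytic ingredient is that every $u \in L^2 (I, S)$ with $u' \in L^2 (I, S')$ coincides a.e. with a function in $C (\overline I, H)$ for which $t \mapsto \| u (t) \|_H^2$ is absolutely continuous with $\tfrac{d}{dt}\, \| u (t) \|_H^2 = 2\, \mathrm{Re}\, (u' (t), u (t))$; this is obtained by approximating $u$ in the graph norm by smooth $S$-valued functions, or simply cited from \cite[p.~176]{Tema79}. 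Granting it, $u \in C (\overline I, H)$ and the map $u \mapsto u (0)$ is continuous into $H$; integrating by parts against $\phi \in C^1 (\overline I)$ with $\phi (0) = 1$, $\phi (T) = 0$ in both the Galerkin identity and the limit equation gives $(u (0) - u_0, e_k) = 0$ for all $k$, i.e. $u (0) = u_0$. Uniqueness is then immediate from the same energy identity: if $w$ is the difference of two solutions, then $w' + V w = 0$ and $w (0) = 0$, so $\tfrac{d}{dt}\, \| w \|_H^2 = - 2\, \mathrm{Re}\, (V w, w) = - 2\, (D (w,w) - \| w \|_H^2) \le 2\, \| w \|_H^2$, and Gronwall forces $w \equiv 0$. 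I expect the main obstacle to be not any single deep step but the careful handling of the limit passage together with establishing the embedding $\{ u \in L^2 (I, S) : u' \in L^2 (I, S') \} \hookrightarrow C (\overline I, H)$ and the associated integration-by-parts formula, on which both the recovery of the initial condition and the uniqueness argument rest.
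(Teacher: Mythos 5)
Your proposal is correct and follows essentially the same route as the paper: Faedo--Galerkin approximation in the Gelfand triple $S \hookrightarrow H \hookrightarrow S'$, the energy estimate giving bounds in $L^\infty (I,H) \cap L^2 (I,S)$, weak limit passage against test functions $\phi (t) e_k$, the Lions--Magenes lemma for $C (\overline{I},H)$ and the identity $\partial_t \| u \|_H^2 = 2 \Re\, (u',u)$, and the same energy argument for uniqueness. The only (harmless) variation is that you take a spectral basis so as to bound $u_m'$ uniformly in $L^2 (I,S')$, whereas the paper uses an arbitrary complete linearly independent system and obtains $u' = f - Vu \in L^2 (I,S')$ a posteriori from the limit equation; both devices are standard and lead to the same conclusion.
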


\begin{proof}[Proof of the existence]
We use the Faedo-Galerkin method.
Since $S$ is separable, there is a sequence of linearly independent elements
   $( e_i )_{i = 1, 2, \ldots}$
which is complete in $S$.
For every $k = 1, 2, \ldots$ we define an approximate solution of problem
   (\ref{eq.Stokesw}) or
   (\ref{eq.Stokesp})
by
\begin{equation}
\label{eq.appsolS}
   u_k = \sum_{i=1}^k c_{k,i} (t) e_i
\end{equation}
and
\begin{equation}
\label{eq.appsolc}
\begin{array}{rclcl}
   (u_k',e_j) + (V u_k,e_j)
 & =
 & (f,e_j)
 & \mbox{for all}
 & j = 1, \ldots, k,
\\
   u_k (0)
 & =
 & u_{0,k},
 &
 &
 \end{array}
\end{equation}
where $u_{0,k}$ is, for example, the orthogonal projection in $H$ of $u_0$ on
the space spanned by the elements $e_1, \ldots, e_k$.
(Note that $u_{0,k}$ can be any element of the space spanned by $e_1, \ldots, e_k$,
 such that $u_{0,k} \to u_0$ in the norm of $H$, as $k \to \infty$.)

The coefficients $c_{k,i}$, $1 \leq i \leq k$, are scalar-valued functions on the
interval $I$, and (\ref{eq.appsolc}) is a system of linear ordinary differential
equations for these functions.
Indeed, we obtain
$$
   \sum_{i=1}^k (e_i,e_j)\, c_{k,i}' (t)
 + \sum_{i=1}^k (V e_i,e_j)\, c_{k,i} (t)
 = (f,e_j)
$$
for all $j = 1, \ldots, k$.
Since the elements $e_1, \ldots, e_k$ are linearly independent, the matrix with
entries
   $(e_i,e_j)$,
where $1 \leq i, j \leq k$, is regular.
Hence, on inverting this matrix we reduce the above system to a system of linear
ordinary equations with constant coefficients
\begin{equation}
\label{eq.resoode}
   c_{k,i}' (t)
 + \sum_{j=1}^k a_{i,j}\, c_{k,j} (t)
 = \sum_{j=1}^k b_{i,j}\, (f,e_j)
\end{equation}
for $i = 1, \ldots, k$, where $a_{i,j}$ and $b_{i,j}$ are complex numbers.
Furthermore, the condition $u_k (0) = u_{0,k}$ is equivalent to $k$ equations
   $c_{k,i} (0) = \pi_i (u_{0,k})$,
where $\pi_i$ is the projection of $u_{0,k}$ on the one-dimensional space spanned by
$e_i$.
System (\ref{eq.resoode}) together with the initial conditions determines uniquely the
coefficients $c_{k,i}$ on the whole interval $I$.

Since the scalar functions $t \mapsto (f (t),e_j)$ are square integrable, so are the
functions $c_{k,i}$.
Hence it follows that
   $u_k \in L^2 (I,S)$ and
   $u_k' \in L^2 (I,S)$
for each $k$.
We will obtain a priori estimates independent of $k$ for the functions $u_k$ and then
pass to the limit.

\textit{A priori estimates}.
To this end, we multiply the first equation of (\ref{eq.appsolc}) by $c_{k,j} (t)$ and
sum up these equations for $j = 1, \ldots, k$.
We get
$$
   (u_k' (t),u_k (t)) + (V u_k (t),u_k (t))
 = (f (t),u_k (t)).
$$
Since
$
   \partial_t (u_k (t),u_k (t)) = 2\, \Re\, (u_k' (t),u_k (t)),
$
it follows that
$$
   \partial_t\, \| u_k (t) \|_H^2 + 2 \left( D (u_k (t)) - \| u_k (t) \|_H^2 \right)
 = 2\, \Re\, (f (t),u_k (t)),
$$
where $D (u) := D (u,u)$ and $\sqrt{D (u)}$ is the equivalent norm in $S$.
The right-hand side of this equality is majorised by
$$
   2\, \| f (t) \|_{S'} \| u_k (t) \|_S
 \leq
   c\, \| f (t) \|_{S'}^2 + \frac{1}{c}\, D (u_k (t))
$$
with arbitrary constant $c > 0$.
We choose $c$ in such a way that $2 - 1/c > 0$.
On the other hand,
$$
   \partial_t\, \| u_k (t) \|_H^2 - 2 \| u_k (t) \|_H^2
 = e^{2t}\, \partial_t \left( e^{- 2t} \| u_k (t) \|_H^2 \right),
$$
as is easy to check.
Therefore,
\begin{equation}
\label{eq.tlhsoti}
   \partial_t \left( e^{- 2t} \| u_k (t) \|_H^2 \right)
 + \Big( 2 - \frac{1}{c} \Big) e^{- 2t}\, D (u_k (t))
 \leq
   c\, e^{- 2t}\, \| f (t) \|_{S'}^2.
\end{equation}
Integrating the inequality from $0$ to $t$, where $t$ is a fixed point of $I$, we readily
deduce that
\begin{eqnarray*}
   \| u_k (t) \|_H^2
 & \leq &
   e^{2t}\, \| u_{0,k} \|_H^2
 + c \int_0^t e^{2 (t-t')} \| f (t') \|_{S'}^2 dt'
\\
 & \leq &
   e^{2t}\, \| u_0 \|_H^2
 + c \int_0^t e^{2 (t-t')} \| f (t') \|_{S'}^2 dt'
\end{eqnarray*}
for almost all $t \in I$.
Therefore,
$$
   \sup_{t \in I} \| u_k (t) \|_H^2
 \leq
   e^{2T}
   \Big( \| u_0 \|_H^2 + c \int_0^T \| f (t) \|_{S'}^2 dt \Big).
$$
the right-hand side being finite and independent of $k$.
We have thus proved that the sequence $u_k$ remains in a bounded subset of $L^\infty (I,H)$,
i.e., there is a constant $C$ such that
\begin{equation}
\label{eq.boinLiH}
   \| u_k \|_{L^\infty (I,H)} \leq C
\end{equation}
for all $k$.

Furthermore, on integrating inequality (\ref{eq.tlhsoti}) in $t$ over all of $I$ we arrive at
the estimates
\begin{eqnarray*}
   \| u_k (T) \|_H^2
 + \Big( 2 - \frac{1}{c} \Big)
   \int_0^T \!\!\! e^{2 (T-t)} D (u_k (t)) dt
 \!\! & \!\! \leq \!\! & \!\!
   e^{2T} \| u_{0,k} \|_H^2
 + c \int_0^T \!\!\! e^{2 (T-t)} \| f (t) \|_{S'}^2 dt
\\
 \!\! & \!\! \leq \!\! & \!\!
   e^{2T}\, \| u_0 \|_H^2
 + c \int_0^T \!\!\! e^{2 (T-t)} \| f (t) \|_{S'}^2 dt
\end{eqnarray*}
for each $k = 1, 2, \ldots$.
This shows that the sequence $u_k$ remains in a bounded subset of $L^2 (I,S)$, i.e.,
\begin{equation}
\label{eq.boinLiS}
   \| u_k \|_{L^2 (I,S)} \leq C
\end{equation}
for all $k$, with $C$ a constant independent of $k$.

\textit{Passage to the limit}.
The a priori estimate of (\ref{eq.boinLiH}) implies that there is a subsequence of
$u_k$ which converges in the weak$^\ast$ topology of $L^\infty (I,H)$.
By abuse of notation, we continue to write $u_k$ for this subsequence.
Thus, there is an element $u \in L^\infty (I,H)$ such that
$$
   \int_I (u_k (t) - u (t), v (t))\, dt \to 0
$$
for all $v \in L^1 (I,H)$.
By (\ref{eq.boinLiH}), the subsequence $u_k$ belongs to a bounded set in $L^2 (I,S)$.
Therefore, another passage to a subsequence shows the existence of some $\tilde{u}$
in $L^2 (I,S)$ and a subsequence (still denoted by $u_k$) which converges to $\tilde{u}$
in the weak$^\ast$ topology of $L^2 (I,S)$.
(Note that the weak$^\ast$ and weak topologies of $L^2 (I,S)$ coincide.)
This means
$$
   \int_I (u_k (t) - \tilde{u} (t), v (t))\, dt \to 0
$$
for all $v \in L^2 (I,S')$.
This latter relation is fulfilled, in particular, if $v \in L^2 (I,H)$.
Since $L^2 (I,H) \hookrightarrow L^1 (I,H)$, it follows that
$$
   \int_I (u (t) - \tilde{u} (t), v (t))\, dt = 0
$$
whenever $v \in L^2 (I,H)$.
Hence, $u = \tilde{u}$ belongs to $L^\infty (I,H) \cap L^2 (I,S)$.

In order to pass to the limit in equations (\ref{eq.appsolc}), we will consider
scalar functions $\phi$ of $t \in I$, which are continuously differentiable and
satisfy $\phi (T) = 0$.
Given such a function $\phi$, we multiply the first equality of (\ref{eq.appsolc})
by $\phi (t)$, integrate in $t \in I$ and use the formula
$$
   \int_0^T (u_k' (t), e_j)\, \phi (t) dt
 = - \int_0^T (u_k (t), e_j)\, \phi' (t) dt - (u_k (0), e_j)\, \phi (0).
$$
In this way we find
$$
 - \int_0^T \!\! (u_k (t), e_j) \phi' (t) dt
 + \int_0^T \!\! (V u_k (t), e_j) \phi (t) dt
 = (u_{0,k}, e_j)\, \phi (0)
 + \int_0^T \!\! (f (t), e_j) \phi (t) dt
$$
for all $j = 1, \ldots, k$.

The passage to the limit for $k \to \infty$ in the integrals on the left-hand side
is easy, for the sequence $u_k$ converges to $u$ in the weak${}^\ast$ topology of
both
   $L^\infty (I,H)$ and
   $L^2 (I,S)$.
We recall that $u_{0,k}$ converges to $u_0$ strongly in $H$.
Therefore, we find in the limit
$$
 - \int_0^T \!\! (u (t), e_j) \phi' (t) dt
 + \int_0^T \!\! (V u (t), e_j) \phi (t) dt
 = (u_0, e_j)\, \phi (0)
 + \int_0^T \!\! (f (t), e_j) \phi (t) dt.
$$
This equality, which holds for each $j$, allows one to write by a linearity argument
\begin{equation}
\label{eq.tewhfej}
 - \int_0^T \!\! (u (t), v)\, \phi' (t) dt
 + \int_0^T \!\! (V u (t), v)\, \phi (t) dt
 = (u_0, v)\, \phi (0)
 + \int_0^T \!\! (f (t), v)\, \phi (t) dt
\end{equation}
for all $v$ which are finite linear combinations of functions $e_j$.
Since every term of (\ref{eq.tewhfej}) depends linearly and continuously in the norm
of $V$ on $v$, the equality (\ref{eq.tewhfej}) is still valid, by continuity, for all
$v$ in $V$.

On writing (\ref{eq.tewhfej}), in particular,
   with any $\phi \in C^\infty_{\mathrm{comp}} (0,T)$,
we arrive at the equality
$$
   \partial_t (u,v) + (Vu,v) = (f,v)
$$
for all $v \in V$.
This is precisely (\ref{eq.Stokesw}) which is valid in the sense of distributions on
$(0,T)$.
As is shown immediately after (\ref{eq.Stokesw}), this equality together with
   $u \in L^2 (I,S)$
implies that
   $u'$ belongs to $L^2 (I,S')$ and
   $u' + Vu = f$
a.e. on $I$.

Finally, it remains to check that $u (0) = u_0$.
(The continuity of $u$ will be proved after the proof of existence.)
For this purpose we multiply the first equation of (\ref{eq.Stokesw}) by $\phi (t)$
with the same $\phi (t)$ as before, integrate in $t \in I$ and use the integration
by parts formula
$$
   \int_0^T \partial_t\, (u (t), v)\, \phi (t) dt
 = - \int_0^T (u (t), v)\, \phi' (t) dt - (u (0), v)\, \phi (0).
$$
We get
$$
 - \int_0^T \!\! (u (t), v)\, \phi' (t) dt
 + \int_0^T \!\! (V u (t), v) \phi (t) dt
 = (u (0), v)\, \phi (0)
 + \int_0^T \!\! (f (t), v)\, \phi (t) dt,
$$
and so comparing this with (\ref{eq.tewhfej}) yields
$
   (u (0) - u_0, v)\, \phi (0) = 0
$
for all $v \in S$ and for each function $\phi$ of the type considered.
Choose $\phi$ such that $\phi (0) \neq 0$, then
$
   (u (0) - u_0, v) = 0
$
for all $v \in S$.
This equality implies $u (0) = u_0$ and completes the proof of the existence.
\end{proof}

\begin{proof}[Proof of the continuity and uniqueness]
The proof is based on a lemma which is a particular case of a general interpolation
theorem of \cite{LionMage72}.

\begin{lemma}
\label{l.interLM}
Let $S$, $H$ and $S'$ be three Hilbert spaces, each space being densely included
into the subsequent one, and $S'$ being the dual of $S$.
If $u$ is a function in $L^2 (I,S)$ and its derivative $u'$ belongs to $L^2 (I,S')$,
then $u$ is equal a.e. to a continuous function on $I$ with values in $H$.
Moreover, $\partial (u,u)_H = 2 \Re\, (u',u)_H$ holds in the sense of distributions
on $(0,T)$.
\end{lemma}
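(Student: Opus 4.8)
The plan is to follow the classical regularisation argument of \cite{LionMage72}: reduce the statement to the elementary identity $\frac{d}{dt} \| w (t) \|_H^2 = 2\, \Re\, (w' (t), w (t))$ for smooth $S\,$-valued functions $w$, and then pass to the limit by mollification in the time variable. The structural fact used throughout is the Gelfand triple $S \hookrightarrow H \cong H' \hookrightarrow S'$ already recorded before (\ref{eq.Frieext}): for $f \in H$ and $v \in S$ the duality pairing $(f,v)$ of $S'$ and $S$ agrees with the inner product $(f,v)_H$, so that $(u' (t), u (t))$ makes sense as the value of the functional $u' (t) \in S'$ on the element $u (t) \in S$, with $|(u' (t), u (t))| \leq \| u' (t) \|_{S'}\, \| u (t) \|_S$.

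First I would extend $u$ slightly past the endpoints of $I$. By Lemma~\ref{l.primiti}, since $u' \in L^2 (I,S') \subset L^1 (I,S')$, the function $u$ is a.e. equal to a continuous $S'\,$-valued function on $\overline{I}$, hence has well-defined boundary values $u (0), u (T) \in S'$; extending by affine $S'\,$-valued functions on small intervals beyond $0$ and $T$ produces $\tilde u$ on an open interval $J \supset \overline{I}$ with $\tilde u \in L^2 (J,S)$ and $\tilde u' \in L^2 (J,S')$. Mollifying in time, $u_\delta := \rho_\delta \ast \tilde u$, gives functions that, for $\delta$ small, are $C^\infty$ on $\overline{I}$ with values in $S$ and satisfy $u_\delta \to u$ in $L^2 (I,S)$ and $u_\delta' \to u'$ in $L^2 (I,S')$; since $S \hookrightarrow H$ continuously, also $u_\delta \to u$ in $L^2 (I,H)$. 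For the smooth functions $u_\delta$ the product rule gives the identity above pointwise, whence
$$
   \| u_\delta (t) - u_\eta (t) \|_H^2
 = \| u_\delta (t_0) - u_\eta (t_0) \|_H^2
 + 2 \int_{t_0}^{t} \Re\, (u_\delta' (s) - u_\eta' (s),\, u_\delta (s) - u_\eta (s))\, ds
$$
for all $t, t_0 \in \overline{I}$. Bounding the integrand by $\| u_\delta' (s) - u_\eta' (s) \|_{S'}\, \| u_\delta (s) - u_\eta (s) \|_S$ and using Cauchy–Schwarz in $s$, and choosing $t_0$ so that $\| u_\delta (t_0) - u_\eta (t_0) \|_H^2 = \frac{1}{T} \int_0^T \| u_\delta (t) - u_\eta (t) \|_H^2\, dt$, I obtain $\sup_{t \in \overline{I}} \| u_\delta (t) - u_\eta (t) \|_H^2 \to 0$ as $\delta, \eta \to 0$, i.e.\ $( u_\delta )$ is Cauchy in $C (\overline{I},H)$. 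Its uniform limit $\bar u \in C (\overline{I},H)$ coincides a.e.\ with $u$ because $u_\delta \to u$ in $L^2 (I,H)$, which proves the first assertion.

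For the identity I would write the smooth equality for $w = u_\delta$ in distributional form on $(0,T)$, test against $\phi \in C^\infty_{\mathrm{comp}} (0,T)$, and let $\delta \to 0$: the left-hand side converges since $\| u_\delta \|_H^2 \to \| u \|_H^2$ uniformly on $\overline{I}$, and the right-hand side converges since $(u_\delta', u_\delta) \to (u', u)$ in $L^1 (I)$, which follows from $u_\delta' \to u'$ in $L^2 (I,S')$ together with $u_\delta \to u$ in $L^2 (I,S)$. This gives $\partial_t (u,u)_H = 2\, \Re\, (u',u)_H$ in the sense of distributions on $(0,T)$. The points requiring care — and in this classical lemma essentially the only ones — are the correct interpretation of the pairing $(u',u)$ through the Gelfand triple and the extension of $u$ past $\partial I$ without destroying the two membership hypotheses; both are settled by the observations above, after which the remainder is routine mollification and the product rule.
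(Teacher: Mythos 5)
The paper itself gives no proof of this lemma --- it simply refers to \cite[p.~177]{Tema79} --- and your mollification argument is the standard one found there, so the overall strategy is sound. In particular the uniform Cauchy estimate for the smooth approximants, obtained from the pointwise product rule together with the bound $|(w',w)| \leq \| w' \|_{S'} \| w \|_{S}$ and the mean-value choice of $t_0$, is exactly the right mechanism for producing the continuous $H\,$-valued representative, and the passage to the limit in the distributional identity is correctly handled.

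There is, however, one step that fails as written: the extension past the endpoints. You extend $u$ beyond $0$ and $T$ by affine $S'\,$-valued functions built from the boundary values $u (0)$ and $u (T)$, which Lemma~\ref{l.primiti} only provides as elements of $S'$, and you then assert that the extension $\tilde u$ belongs to $L^2 (J,S)$. That assertion is unjustified and false in general: an affine function whose values lie in $S' \setminus S$ is not $S\,$-valued, so $\tilde u$ fails to be in $L^2 (J,S)$ near the endpoints, the mollifications $u_\delta (t)$ for $t$ close to $0$ or $T$ are then only elements of $S'$ rather than of $S$ (or even of $H$), and both the pointwise product rule and the quantity $\| u_\delta (t) - u_\eta (t) \|_H$ lose their meaning there --- precisely where the continuity up to $\overline{I}$ has to be established. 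The standard repair is to extend by reflection, $\tilde u (t) = u (-t)$ on $(-\eta,0)$ and $\tilde u (t) = u (2T-t)$ on $(T,T+\eta)$: this manifestly preserves membership in $L^2 (\cdot,S)$, and the continuity of $u$ into $S'$ at $t = 0$ and $t = T$ guaranteed by Lemma~\ref{l.primiti} ensures that the concatenation has a weak derivative in $L^2 (J,S')$ (equal to $-u' (-t)$, respectively $-u' (2T-t)$, on the added pieces) with no Dirac terms at the junctions. With that single change the remainder of your argument goes through verbatim.
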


Note that the equality $\partial (u,u)_H = 2 \Re\, (u',u)_H$ is meaningful since
the functions
   $t \mapsto \| u (t) \|_H^2$ and
   $t \mapsto (u' (t),u (t))_H$
are both integrable on $I$.

\begin{proof}
For a more elementary proof than that of \cite{LionMage72} we refer the reader to
\cite[p.~177]{Tema79}.
\end{proof}

By Lemma \ref{l.interLM}, the continuity of the mapping
   $u : I \to H$
in Theorem \ref{t.exiuniS} becomes obvious.
It only remains to check the uniqueness.
Suppose $u_1$ and $u_2$ are two solutions of problem (\ref{eq.Stokesp}) which bear
the desired properties.
Set $u = u_1 - u_2$, then
$$
\begin{array}{rcl}
    u & \in & L^2 (I,S),
\\
   u' & \in & L^2 (I,S')
\end{array}
$$
and
$$
\begin{array}{rclcl}
   u' + Vu
 & =
 & 0
 & \mbox{a.e. on}
 & I,
\\
     u (0)
 & =
 & 0.
 &
 &
\end{array}
$$
Taking the scalar product of the first equality with $u (t)$ yields
$$
   (u' (t), u (t)) + (Vu (t),u(t)) = 0
$$
a.e. on $I$.
Using Lemma \ref{l.interLM} we get
\begin{eqnarray*}
   \frac{1}{2}\, \partial_t (u (t),u (t))
 & =
 & -\, (Vu (t),u (t))
\\
 & \leq
 & 0
\end{eqnarray*}
whence
$
   \| u (t) \|_H^2 \leq \| u (0) \|_H^2 = 0
$
for all $t \in I$.
Hence, $u_1 (t) = u_2 (t)$ for each $t \in I$, as desired.
\end{proof}

On using the solution $u$ of problem (\ref{eq.Stokesp}) given by Theorem \ref{t.exiuniS}
we determine the pressure $p$ from (\ref{eq.sefp}).
This leads to a solution $(u,p)$ of linearised problem (\ref{eq.basiclp}).
Assuming that both $f$ and $u_0$ are sufficiently smooth, we can actually obtain as
much regularity as desired for $u$ and $p$.
We establish only a simple result of this type.

\begin{theorem}
\label{t.regtflNS}
Suppose
   $f \in L^2 (I,H)$ and
   $u_0 \in S$.
Then
   $u \in L^2 (I,H^{2m} (\cX,F^i))$ and
   $u' \in L^2 (I,H)$,
i.e.,
\begin{equation}
\label{eq.domStoc}
\begin{array}{rcl}
   u & \in & H^{\bs (1)} (\cC,F^{i}),
\\
   p & \in & L^2 (I,H^1 (\cX,F^{i-1})).
\end{array}
\end{equation}
\end{theorem}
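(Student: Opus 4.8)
The plan is to run the Faedo--Galerkin approximation of Theorem \ref{t.exiuniS} once more, but now in the basis of eigenfunctions of the operator $V$, and to derive one additional a priori estimate -- an ``energy estimate for $V u$'' -- that is unavailable for a generic basis. First I would fix the basis. Since the embedding $S \hookrightarrow H$ is compact (it factors through the compact embedding $H^m (\cX,F^i) \hookrightarrow L^2 (\cX,F^i)$) and $V + \Id : S \to S'$ is an isomorphism, the operator $(V + \Id)^{-1}$ induces a compact, nonnegative, selfadjoint operator on $H$. Hence there is an orthonormal basis $( e_i )_{i = 1, 2, \ldots}$ of $H$ with $V e_i = \lambda_i e_i$, where $0 \leq \lambda_1 \leq \lambda_2 \leq \ldots \to \infty$; by (\ref{eq.Dirichl}) these eigenvectors are mutually orthogonal in $S$ as well. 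The Dirichlet problem for the elliptic operator $V = \varepsilon \iD^m + \nu \iD$ of order $2m$ being elliptic in the Sobolev scale (see the proof of Lemma \ref{l.Dirichl}), each $e_i$ is a smooth section of $F^i$ and there is a constant $c > 0$ such that
$$
   \| v \|_{H^{2m} (\cX,F^i)}
 \leq
   c\, \big( \| V v \|_{L^2 (\cX,F^i)} + \| v \|_{L^2 (\cX,F^i)} \big)
$$
for every $v$ in the domain of the Dirichlet realisation of $V$.

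With this basis I would form the approximate solutions $u_k = \sum_{i=1}^k c_{k,i} (t) e_i$ exactly as in the proof of Theorem \ref{t.exiuniS}, with $u_{0,k}$ the orthogonal projection in $H$ of $u_0$ onto the span of $e_1, \ldots, e_k$; the coefficients still solve a linear system of ordinary differential equations, so $u_k, u_k' \in L^2 (I,S)$ for each $k$. Multiplying the $j\,$-th equation of (\ref{eq.appsolc}) by $\lambda_j c_{k,j} (t)$ and summing over $j = 1, \ldots, k$ amounts to testing with $V u_k (t)$, which lies in the span of $e_1, \ldots, e_k$, so the operation is admissible. This yields
$$
   (u_k' (t), V u_k (t)) + \| V u_k (t) \|_H^2 = (f (t), V u_k (t)) ,
$$
and since $V$ is selfadjoint and nonnegative, $(V u_k, u_k)$ is real and $\partial_t\, (V u_k, u_k) = 2 \Re\, (u_k', V u_k)$; combining this with $\Re\, (f, V u_k) \leq \| f \|_H^2 / 2 + \| V u_k \|_H^2 / 2$ gives
$$
   \partial_t\, (V u_k (t), u_k (t)) + \| V u_k (t) \|_H^2 \leq \| f (t) \|_H^2 .
$$
Integrating over $(0,t)$ and using $(V u_{0,k}, u_{0,k}) = \sum_{i \leq k} \lambda_i\, |(u_0,e_i)|^2 \leq (V u_0, u_0)$, which is finite because $u_0 \in S$, I would obtain bounds for $u_k$ in $L^\infty (I,S)$ and for $V u_k$ in $L^2 (I,H)$ that are independent of $k$. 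The elliptic estimate displayed above then bounds $u_k$ in $L^2 (I,H^{2m} (\cX,F^i))$; and since $u_k' = P_k (f - V u_k)$, where $P_k$ is the orthogonal projection of $H$ onto the span of $e_1, \ldots, e_k$ and hence has norm at most one, the sequence $u_k'$ is bounded in $L^2 (I,H)$ as well, all uniformly in $k$.

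Passing to a subsequence, $u_k$ converges weakly in $L^2 (I,H^{2m} (\cX,F^i))$ and $u_k'$ converges weakly to $u'$ in $L^2 (I,H)$; repeating the passage to the limit of Theorem \ref{t.exiuniS} shows that the weak limit solves (\ref{eq.Stokesp}) with the given data, hence, by the uniqueness part of that theorem, it coincides with the solution $u$ already at hand. Thus $u \in L^2 (I,H^{2m} (\cX,F^i))$ and $u' \in L^2 (I,H)$; since $\bs (1) = (2m,1)$ and $H \hookrightarrow L^2 (\cX,F^i)$, this is exactly $u \in H^{\bs (1)} (\cC,F^{i})$. Finally, the pressure is read off from (\ref{eq.sefp}) with the nonlinear term absent, which here reduces to $A p = (\Id - P) f$ -- because $\iD$, and hence $V$, commutes with $P$, and $P$ annihilates sections of the form $A p$ -- whose canonical solution is $p = A^\ast G (\Id - P) f$. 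As $A^\ast G$ is a pseudodifferential operator of order $-1$ on $\cX$ and $f (\cdot,t) \in H \subset L^2 (\cX,F^i)$ for almost every $t$, it follows that $p \in L^2 (I,H^1 (\cX,F^{i-1}))$, which together with the above is (\ref{eq.domStoc}).

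The step I expect to require the most care is the legitimacy of using $V u_k$ as a test function in the Galerkin equations: this rests on having chosen the basis of $V\,$-eigenfunctions, which in turn uses the compactness of $S \hookrightarrow H$ and the isomorphism $V + \Id : S \to S'$ from Section \ref{s.geaffra}, together with the elliptic regularity of the Dirichlet problem for $\varepsilon \iD^m + \nu \iD$, which both makes the $e_i$ smooth and converts the $L^2 (I,H)\,$-bound on $V u_k$ into the desired $L^2 (I,H^{2m} (\cX,F^i))\,$-bound on $u_k$.
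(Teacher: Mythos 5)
Your proof is correct, but it follows a genuinely different route from the paper's. The paper keeps an arbitrary basis $e_j \in S$ (only arranging $u_{0,k}$ to be the projection of $u_0$ in $S$, so that $D(u_{0,k}) \leq D(u_0)$), multiplies the Galerkin equations by $c_{k,j}'(t)$ -- i.e.\ tests with $u_k'$ -- and uses $\partial_t (V u_k, u_k) = 2 \Re\, (V u_k, u_k')$ to obtain the bound $\int_0^T \| u_k' \|_H^2\, dt \leq C + D(u_{0,k}) + \int_0^T \| f \|_H^2\, dt$; having secured $u' \in L^2 (I,H)$, it then reads $u(t)$ and $p(t)$ off the \emph{stationary} problem $(\varepsilon \iD^m + \nu \iD) u + Ap = f - u'$, $A^\ast u = 0$, for a.e.\ $t$, and invokes elliptic regularity there. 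You instead work in the spectral basis of $V$, test with $V u_k$, and get the $L^2 (I,H)$-bound on $V u_k$ (hence the $L^2(I,H^{2m})$-bound on $u_k$) directly at the Galerkin level, recovering $u_k' = P_k (f - V u_k)$ afterwards; this is precisely the technique the paper reserves for the nonlinear analogue (Theorem \ref{t.regtfNS} in Section \ref{s.proofTh}, where the $e_i$ are taken to be eigenfunctions of $V + \Id$ and the equations are tested with $(V+\Id) u_k$). The paper's choice is slightly more elementary -- no spectral theory of $V$ is needed -- while yours delivers the $L^\infty (I,S)$ and $L^2 (I,\cD_V)$ bounds simultaneously and uniformly in $k$. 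One point to phrase more carefully: the $e_i$ are eigenfunctions of the Friedrichs extension on $S$, so they solve $(\varepsilon \iD^m + \nu \iD) e_i + A p_i = \lambda_i e_i$ with a pressure term rather than a genuine Dirichlet problem for the differential operator $V$ alone; the estimate $\| v \|_{H^{2m}} \leq c\, ( \| V v \|_H + \| v \|_H )$ on $\cD_V$ is therefore the regularity estimate for the stationary Stokes system (the paper's standing Shapiro--Lopatinskij assumption on the Spencer--Neumann problem), not plain Dirichlet elliptic regularity -- but since the paper relies on exactly the same stationary estimate, this is a matter of wording, not a gap.
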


Here, we tacitly assume that the Neumann problem after Spencer for complex (\ref{eq.ellcomp})
at step $i$ satisfies the Shapiro-Lopatinskij condition on the boundary of $\cX$, if there
is any.

\begin{proof}
The first point is to establish that $u' \in L^2 (I,H)$, i.e., $u' \in L^2 (\cC,F^i)$.
This is proved by deriving another a priori estimate for the approximate solution $u_k$
constructed by the Galerkin method.

Using the notation of the proof of Theorem \ref{t.exiuniS}, we multiply the first
equality of (\ref{eq.appsolc}) by the derivative $c_{k,j}' (t)$ and sum up these
equalities for $j = 1, \ldots, k$.
This gives
$$
   (u_k' (t),u_k' (t)) + (V u_k (t),u_k' (t))
 = (f (t),u_k' (t)).
$$
Since
$
   \partial_t (V u_k (t),u_k (t)) = 2\, \Re\, (V u_k (t),u_k' (t)),
$
we get
$$
   2\, \| u_k' (t) \|_H^2 + \partial_t \left( D (u_k (t)) - \| u_k (t) \|_H^2 \right)
 = 2\, \Re\, (f (t),u_k' (t))
$$
for almost all $t \in I$.

We then integrate the latter equality in $t \in I$ and use the Schwarz inequality,
obtaining
\begin{eqnarray*}
\lefteqn{
   2 \int_0^T \| u_k' (t) \|_H^2 dt + D (u_k (T)) + \| u_k (0) \|_H^2
}
\\
 & = &
   D (u_k (0)) + \| u_k (T) \|_H^2
 + 2\, \Re \int_0^T (f (t),u_k' (t))\, dt
\\
 & \leq &
   D (u_{0,k}) + \| u_k (T) \|_H^2
 + \int_0^T \left( \| f (t) \|_H^2 + \| u_k' (t) \|_H^2 \right) dt
\end{eqnarray*}
whence
\begin{equation}
\label{eq.bodiL2H}
   \int_0^T \| u_k' (t) \|_H^2 dt
 \leq
   C
 + D (u_{0,k})
 + \int_0^T \| f (t) \|_H^2\, dt,
\end{equation}
which is due to (\ref{eq.boinLiH}).

The basis $e_j$ used in the Galerkin method may be chosen so that $e_j \in S$
for all $j$ and we can take $u_{0,k}$ to be the projection in $S$ of $u_0$ on
the space spanned by $e_1, \ldots, e_k$.
Hence it follows that
   $u_{0,k}$ converges to $u_0$ strongly in $S$, as $k \to \infty$,
and
   $D (u_{0,k}) \leq D (u_0)$.
With these choices of the basis $e_j$ and initial data $u_{0,k}$ estimate (\ref{eq.bodiL2H})
shows that the sequence $u_k'$ belongs to a bounded set in $L^2 (I,H)$, and so
   $u' \in L^2 (I,H)$,
as desired.

Having disposed of this preliminary step, we come back to equalities
   (\ref{eq.basiclp}),
   (\ref{eq.mixedbc})
and apply the regularity theorem in the stationary case.
More precisely, we get
$$
\begin{array}{rcl}
   (\varepsilon \iD^m + \nu \iD) u + Ap
 & =
 & f - u',
\\
   A^\ast u
 & =
 & 0
\end{array}
$$
in $\cX$ and
   $u = 0$ up to order $m$ on the boundary of $\cX$ (if there is any)
for almost all $t \in I$.
Since $f - u' \in L^2 (I,H)$, we deduce that
   $u (t)$ belongs to $H^{2m} (\cX,F^i)$ and
   $p (t)$ belongs to $H^1 (\cX,F^{i-1})$.
Moreover, since
   $f (t) - u' (t) \mapsto (u (t),p (t))$
is a continuous linear mapping from $H$ into $H^{2m} (\cX,F^i) \times H^1 (\cX,F^{i-1})$,
it is clear that (\ref{eq.domStoc}) is satisfied.
\end{proof}

\section{Compactness theorems}
\label{s.compact}

The compactness theorems of this section go back at least as far as \cite{Lion69}.
They are included for the sake of selfcontained presentation.
The proofs are based on the following well-known lemma.

\begin{lemma}
\label{l.basicle}
Suppose that
   $\mathcal{B}_0$,
   $\mathcal{B}$ and
   $\mathcal{B}_1$
are three Banach spaces with the property that
   $\mathcal{B}_0 \hookrightarrow \mathcal{B} \hookrightarrow \mathcal{B}_1$,
the embedding of $\mathcal{B}$ into $\mathcal{B}_1$ being continuous and
the embedding of $\mathcal{B}_0$ into $\mathcal{B}$ being compact.
Then for each $\epsilon > 0$ there is a constant $c (\epsilon)$ depending on
$\epsilon$, such that
$$
   \| v \|_{\mathcal{B}}
 \leq
   \epsilon\, \| v \|_{\mathcal{B}_0} + c (\epsilon)\, \| v \|_{\mathcal{B}_1}
$$
for all $v \in \mathcal{B}_0$.
\end{lemma}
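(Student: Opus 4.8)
The statement to prove is the classical Ehrling-type inequality: given $\mathcal{B}_0 \hookrightarrow\hookrightarrow \mathcal{B} \hookrightarrow \mathcal{B}_1$, for every $\epsilon>0$ there is $c(\epsilon)$ with $\|v\|_{\mathcal{B}} \leq \epsilon\|v\|_{\mathcal{B}_0} + c(\epsilon)\|v\|_{\mathcal{B}_1}$ for all $v\in\mathcal{B}_0$. The plan is to argue by contradiction. Suppose the inequality fails for some $\epsilon_0>0$; then for each $k=1,2,\ldots$ there is $v_k\in\mathcal{B}_0$ with
\[
   \| v_k \|_{\mathcal{B}} > \epsilon_0\, \| v_k \|_{\mathcal{B}_0} + k\, \| v_k \|_{\mathcal{B}_1}.
\]
Since both sides are homogeneous of degree one and $v_k\neq 0$, we may normalise $\| v_k \|_{\mathcal{B}} = 1$. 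Then the displayed inequality forces $\| v_k \|_{\mathcal{B}_0} < 1/\epsilon_0$ and $\| v_k \|_{\mathcal{B}_1} < 1/k$, so the sequence $(v_k)$ is bounded in $\mathcal{B}_0$ while $v_k\to 0$ in $\mathcal{B}_1$.

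The next step exploits the compactness of the embedding $\mathcal{B}_0\hookrightarrow\mathcal{B}$. Since $(v_k)$ is bounded in $\mathcal{B}_0$, a subsequence — still denoted $(v_k)$ — converges in $\mathcal{B}$, say $v_k \to v$ in $\mathcal{B}$. By continuity of $\mathcal{B}\hookrightarrow\mathcal{B}_1$ we also have $v_k\to v$ in $\mathcal{B}_1$; but $v_k\to 0$ in $\mathcal{B}_1$, so $v=0$ by uniqueness of limits in $\mathcal{B}_1$. Hence $v_k\to 0$ in $\mathcal{B}$, contradicting $\| v_k \|_{\mathcal{B}} = 1$ for all $k$. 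This contradiction establishes the inequality.

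There is essentially no hard obstacle here; the only point requiring a modicum of care is the reduction to a normalised sequence and the observation that the two limits (in $\mathcal{B}$ and in $\mathcal{B}_1$) must agree because both embeddings are continuous and injective — convergence in $\mathcal{B}$ implies convergence in $\mathcal{B}_1$ to the \emph{same} element. If one prefers to avoid subsequences entirely, an alternative is a direct argument: were the claim false, one could not have $c(\epsilon)\to\infty$ controlled, but the contradiction route above is cleanest and is the standard proof (cf. \cite{LionMage72}, \cite{Tema79}). I would present the contradiction argument as the main body and omit further remarks.
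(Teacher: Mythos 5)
Your proof is correct and follows essentially the same route as the paper: argue by contradiction, extract a sequence violating the inequality with constant $k$, normalise it, use boundedness in $\mathcal{B}_0$ together with the compact embedding to pass to a subsequence converging in $\mathcal{B}$, and derive a contradiction from the fact that the limit must be zero in $\mathcal{B}_1$. The only cosmetic difference is that you normalise in $\mathcal{B}$ while the paper normalises in $\mathcal{B}_0$; both choices work equally well.
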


\begin{proof}
The proof is by contradiction.
Saying that the lemma is not true amounts to saying that there exists some $\epsilon > 0$
with the property that for each $c \in \R$ one has
$$
   \| v \|_{\mathcal{B}}
 \geq
   \epsilon\, \| v \|_{\mathcal{B}_0} + c\, \| v \|_{\mathcal{B}_1}
$$
for at least one $v \in \mathcal{B}_0$.
Taking $c = k$ we obtain a sequence of elements $v_k$ in $\mathcal{B}_0$ which satisfies
$$
   \| v_k \|_{\mathcal{B}}
 \geq
   \epsilon\, \| v_k \|_{\mathcal{B}_0} + k\, \| v_k \|_{\mathcal{B}_1}
$$
for all $k = 1, 2, \ldots$.
We consider then the normalised sequence
$
   \displaystyle
   w_k = \frac{v_k}{\| v_k \|_{\mathcal{B}_0}}
$
satisfying
\begin{equation}
\label{eq.wcttnsw}
   \| w_k \|_{\mathcal{B}}
 \geq
   \epsilon + k\, \| w_k \|_{\mathcal{B}_1}
\end{equation}
for all $k$.
Since $\| w_k \|_{\mathcal{B}_0} = 1$, the sequence $w_k$ is bounded in $\mathcal{B}$, and
so (\ref{eq.wcttnsw}) shows that
$
   \| w_k \|_{\mathcal{B}_1} \to 0,
$
as $k \to \infty$.
Furthermore, since the embedding of $\mathcal{B}_0$ into $\mathcal{B}$ is compact, the
sequence $w_k$ is relatively compact in $\mathcal{B}$.
Hence, we can extract from $w_k$ a subsequence $w_{k_j}$ strongly convergent in $\mathcal{B}$.
By the above, the limit of $w_{k_j}$ must be zero, however, this contradicts estimate
(\ref{eq.wcttnsw}) because
   $\| w_k \|_{\mathcal{B}} \geq \epsilon > 0$
for all $k$.
\end{proof}

Let
   $\mathcal{B}_0$,
   $\mathcal{B}$ and
   $\mathcal{B}_1$
be three Banach spaces such that
   $\mathcal{B}_0 \hookrightarrow \mathcal{B} \hookrightarrow \mathcal{B}_1$,
the embeddings being continuous.
We moreover assume that
   $\mathcal{B}_0$ and
   $\mathcal{B}_1$
are reflexive and the embedding of $\mathcal{B}_0$ into $\mathcal{B}$ is compact.
Given any real numbers $p_0$ and $p_1$ both greater than $1$, we consider the space
   $\mathcal{F}$
which consists of all functions $v \in L^{p_0} (I,\mathcal{B}_0)$ whose weak derivative
   $v' = \partial_t v$
belongs to $L^{p_1} (I,\mathcal{B}_1)$.
The space $\mathcal{F}$ is provided with the norm
$$
   \| v \|_{\mathcal{F}}
 = \| v \|_{L^{p_0} (I,\mathcal{B}_0)} + \| v' \|_{L^{p_1} (I,\mathcal{B}_1)},
$$
which makes it a Banach space.
It is obvious that
   $\mathcal{F} \subset L^{p_0} (I,\mathcal{B})$,
the embedding being continuous.
We are actually going to prove that this embedding is compact.

\begin{theorem}
\label{t.tptteic}
Under the above assumptions, the embedding of $\mathcal{F}$ into $L^{p_0} (I,\mathcal{B})$
is compact.
\end{theorem}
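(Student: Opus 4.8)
The plan is to prove this by the classical Aubin--Lions time-averaging argument, with Lemma~\ref{l.basicle} serving as the bridge between $\mathcal{B}_1$ and $\mathcal{B}$. Let $(v_k)$ be a bounded sequence in $\mathcal{F}$; we must extract a subsequence converging strongly in $L^{p_0} (I,\mathcal{B})$. Since $p_0, p_1 > 1$ and $\mathcal{B}_0$, $\mathcal{B}_1$ are reflexive, the spaces $L^{p_0} (I,\mathcal{B}_0)$ and $L^{p_1} (I,\mathcal{B}_1)$ are reflexive, so after passing to a subsequence we may assume $v_k \rightharpoonup v$ in $L^{p_0} (I,\mathcal{B}_0)$ and $v_k' \rightharpoonup v'$ in $L^{p_1} (I,\mathcal{B}_1)$, where $v \in \mathcal{F}$. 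Replacing $v_k$ by $v_k - v$ reduces us to the case $v = 0$, and it remains to show $v_k \to 0$ strongly in $L^{p_0} (I,\mathcal{B})$.

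Given $a > 0$, I would introduce the time-averages $V_k^a (t) = \frac{1}{a} \int_t^{t+a} v_k (s)\, ds$ and the remainders $R_k^a = V_k^a - v_k$, for $t$ in the subinterval $(0, T-a)$ of $I$; the contribution of the part of $I$ near $T$ is absorbed by letting $a \to 0$ after a standard modification (extending $v_k$, or using one-sided averages). Writing $v_k (t) - v_k (t+s) = - \int_0^s v_k' (t+\tau)\, d\tau$ and averaging over $s \in (0,a)$ gives $R_k^a (t) = \frac{1}{a} \int_0^a \int_0^s v_k' (t+\tau)\, d\tau\, ds$, whence by H\"{o}lder's inequality $\| R_k^a (t) \|_{\mathcal{B}_1} \leq \int_t^{t+a} \| v_k' (\tau) \|_{\mathcal{B}_1}\, d\tau \leq a^{1-1/p_1} \| v_k' \|_{L^{p_1} (I,\mathcal{B}_1)}$. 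Since $(v_k')$ is bounded in $L^{p_1} (I,\mathcal{B}_1)$ and $I$ is bounded, this yields $\| R_k^a \|_{L^{p_0} (I,\mathcal{B}_1)} \leq c\, a^{1-1/p_1}$ with $c$ independent of $k$.

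Next, for each fixed $a$ I claim $V_k^a \to 0$ strongly in $L^{p_0} (I,\mathcal{B})$. Indeed, for each fixed $t$ the sequence $V_k^a (t)$ is bounded in $\mathcal{B}_0$ by $a^{-1/p_0} \| v_k \|_{L^{p_0} (I,\mathcal{B}_0)}$ (H\"{o}lder again), and $V_k^a (t) \rightharpoonup 0$ weakly in $\mathcal{B}_0$ since $v_k \rightharpoonup 0$ in $L^{p_0} (I,\mathcal{B}_0)$ (test against $a^{-1} \mathbf{1}_{(t,t+a)}$ times a functional). Because the embedding $\mathcal{B}_0 \hookrightarrow \mathcal{B}$ is compact, the bounded sequence $V_k^a (t)$ is relatively compact in $\mathcal{B}$, and every strongly convergent subsequence has limit equal to its weak limit $0$; hence $V_k^a (t) \to 0$ in $\mathcal{B}$ for every such $t$. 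Since $\| V_k^a (t) \|_{\mathcal{B}}$ is bounded by a constant independent of $t$ and $k$, and $I$ has finite length, the dominated convergence theorem gives $\| V_k^a \|_{L^{p_0} (I,\mathcal{B})} \to 0$ as $k \to \infty$.

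Finally I would assemble these estimates. By Lemma~\ref{l.basicle}, for each $\epsilon > 0$ there is $c (\epsilon)$ with $\| v_k \|_{L^{p_0} (I,\mathcal{B})} \leq \epsilon\, \| v_k \|_{L^{p_0} (I,\mathcal{B}_0)} + c (\epsilon)\, \| v_k \|_{L^{p_0} (I,\mathcal{B}_1)}$, while the continuous embedding $\mathcal{B} \hookrightarrow \mathcal{B}_1$ gives $\| v_k \|_{L^{p_0} (I,\mathcal{B}_1)} \leq C \| V_k^a \|_{L^{p_0} (I,\mathcal{B})} + \| R_k^a \|_{L^{p_0} (I,\mathcal{B}_1)}$. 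Using the boundedness of $(v_k)$ in $L^{p_0} (I,\mathcal{B}_0)$, the bound on $R_k^a$, and the convergence of $V_k^a$, one takes $\limsup_{k}$ and then lets first $a \to 0$ and then $\epsilon \to 0$ to conclude $v_k \to 0$ in $L^{p_0} (I,\mathcal{B})$, proving the theorem. The main obstacle here is conceptual rather than computational: the $\mathcal{B}$-norm of the remainder $R_k^a$ cannot be controlled directly — only its $\mathcal{B}_1$-norm is governed by $v_k'$ — so Lemma~\ref{l.basicle} is indispensable for transferring smallness from $\mathcal{B}_1$ down to $\mathcal{B}$; the remaining care lies in ordering the two small parameters $\epsilon$ and $a$ correctly and in handling the end part of $I$ in the definition of the averages.
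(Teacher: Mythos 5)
Your argument is correct and is essentially the paper's own proof: both reduce matters via Lemma \ref{l.basicle} to strong convergence in the $\mathcal{B}_1$-based norm and then split $v_k$ into a time average (made strongly convergent by the compact embedding $\mathcal{B}_0 \hookrightarrow \mathcal{B}$) plus a remainder controlled by $\| v_k' \|_{L^{p_1} (I,\mathcal{B}_1)}$ through H\"{o}lder's inequality. The only differences are bookkeeping — the paper fixes the averaging window separately at each point $t_0$ and invokes dominated convergence, whereas you keep a single window $a$ and let $a \to 0$ at the end — and the endpoint issue near $t = T$ that you acknowledge is indeed disposed of by the standard modification you indicate.
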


\begin{proof}
Let $u_k$ be a bounded sequence in $\mathcal{F}$.
We have to show that this sequence contains a subsequence which converges strongly in
$L^{p_0} (I,\mathcal{B})$.

Since the spaces $\mathcal{B}_0$ and $\mathcal{B}_1$ are reflexive and both $p_0$ and $p_1$
are not extreme, the spaces
   $L^{p_0} (I,\mathcal{B}_0)$ and
   $L^{p_1} (I,\mathcal{B}_1)$
are likewise reflexive, and hence $\mathcal{F}$ is reflexive.
Therefore, there exists a subsequence of $u_k$
   (for which we continue to write $u_k$ by abuse of notation)
such that
   $u_k$ converges to $u \in \mathcal{F}$ weakly in $\mathcal{F}$, as $k \to \infty$,
which means that
   $u_k$ converges to $u$ weakly in $L^{p_0} (I,\mathcal{B}_0)$
and
   $u_k'$ converges to $u'$ weakly in $L^{p_1} (I,\mathcal{B}_1)$.
It suffices to prove that
   $v_k = u_k - u$ converges to zero strongly in $L^{p_0} (I,\mathcal{B})$.

We shall have established the theorem if we prove that $v_k$ converges to zero strongly
in $L^{p_0} (I,\mathcal{B}_1)$.
In fact, due to Lemma \ref{l.basicle}, we get
$$
   \| v_k \|_{L^{p_0} (I,\mathcal{B})}
 \leq
   \epsilon\, \| v_k \|_{L^{p_0} (I,\mathcal{B}_0)}
 + c (\epsilon)\, \| v_k \|_{L^{p_0} (I,\mathcal{B}_1)}
$$
for all $k$.
Since the sequence $v_k$ is bounded in $\mathcal{F}$, it follows that
$$
   \| v_k \|_{L^{p_0} (I,\mathcal{B})}
 \leq
   \epsilon\, C
 + c (\epsilon)\, \| v_k \|_{L^{p_0} (I,\mathcal{B}_1)},
$$
with $C$ a constant independent of $k$.
On passing to the limit in this inequality we get
$$
   \limsup_{k \to \infty} \| v_k \|_{L^{p_0} (I,\mathcal{B})} \leq \epsilon\, C,
$$
for $v_k \to 0$ strongly in $L^{p_0} (I,\mathcal{B}_1)$.
Since $\epsilon > 0$ is arbitrary small in Lemma \ref{l.basicle}, the upper limit
is equal to zero, as desired.

To prove that $v_k$ converges to zero strongly in $L^{p_0} (I,\mathcal{B}_1)$, we
observe that $\mathcal{F}$ is embedded continuously into $C (\overline{I},\mathcal{B}_1)$.
This follows from Lemma \ref{l.primiti}, the continuity of the embedding is easy
to check.
We infer from this embedding that there is a constant $C > 0$ with the property that
$$
   \| v_k (t) \|_{\mathcal{B}_1} \leq c
$$
for all $t \in I$ and all $k$.
By the Lebesgue theorem, the strong convergence of $v_k$ to zero in $L^{p_0} (I,\mathcal{B}_1)$
is now proved if we show that $v_k (t) \to 0$ in the norm of $\mathcal{B}_1$ for almost all
$t \in I$.

Pick $t_0 \in I$.
Write
$$
   v_k (t_0) = v_k (t') - \int_{t_0}^{t'} v_k' (s) ds
$$
and by integration
$$
   v_k (t_0)
 = \frac{1}{t-t_0}
   \Big( \int_{t_0}^t v_k (t') dt' - \int_{t_0}^{t} dt' \int_{t_0}^{t'} v_k' (s) ds \Big)
$$
for all $t \in I$.
Hence it follows that $v_k (t_0) = I_k' + I_k''$, where
$$
\begin{array}{rcl}
   I_k'
 & =
 & \displaystyle
   \frac{1}{t-t_0}
   \int_{t_0}^t v_k (t') dt',
\\
   I_k''
 & =
 & \displaystyle
   \frac{-1}{t-t_0}
   \int_{t_0}^{t} (t-s) v_k' (s) ds.
\end{array}
$$
For a given $\epsilon > 0$, we choose $t$ in such a way that
$$
   \| I_k'' \|_{\mathcal{B}_1}
 \leq
   \int_{t_0}^{t} \| v_k' (s) \|_{\mathcal{B}_1} ds
 \leq
   \frac{\epsilon}{2}.
$$
Then, for this fixed $t$, we observe that $I_k' \to 0$ weakly in $\mathcal{B}_0$, and so
strongly in $\mathcal{B}_1$, as $k \to \infty$.
For sufficiently large $k$, we thus obtain
$
   \| I_k' \|_{\mathcal{B}_1} \leq \epsilon/2
$
whence
   $\| v_k (t_0) \|_{\mathcal{B}_1} \leq \epsilon$,
as desired.
\end{proof}

As usual, the most efficient compactness theorem can be formulated within the framework
of Hilbert spaces and it is based on the notion of fractional derivatives of a function.
Assume that
  $\mathcal{B}_0$,
   $\mathcal{B}$ and
   $\mathcal{B}_1$
are Hilbert spaces with continuous embeddings
   $\mathcal{B}_0 \hookrightarrow \mathcal{B} \hookrightarrow \mathcal{B}_1$,
where moreover the embedding of $\mathcal{B}_0$ into $\mathcal{B}$ is compact.
If $v$ is a function of $t \in \R$ with values in $\mathcal{B}_1$, we denote by $\hat{v}$
its Fourier transform, i.e.,
$$
   \hat{v} (\tau)
 = \int_{-\infty}^{+\infty} e^{- \imath \tau t} v (t) dt
$$
for $\tau \in \R$.
Given any $\lambda$ with $0 < \lambda \leq 1$, by the derivative of $v$ of order $\lambda$
in $t$ is meant the inverse Fourier transform of $(\imath \tau)^{\lambda} \hat{v}$, or
$$
   \widehat{\partial_t^{\lambda} v} (\tau)
 = (\imath \tau)^{\lambda} \hat{v} (\tau).
$$
Following \cite[p.~185]{Tema79} we define the space $\mathcal{H}^\lambda$ (depending on
   $\mathcal{B}_0$,
   $\mathcal{B}_1$ and
   $\lambda$)
to consist of all functions
   $v \in L^2 (\R,\mathcal{B}_0)$ such that
   $\partial_t^{\lambda} v \in L^2 (\R,\mathcal{B}_1)$.
This is a Hilbert space under the norm
$$
   \| v \|_{\mathcal{H}^\lambda}
 = \Big( \| v \|_{L^2 (\R,\mathcal{B}_0)}^2
       + \| |\tau|^{\lambda} \hat{v} \|_{L^2 (\R,\mathcal{B}_1)}^2
   \Big)^{1/2}.
$$
For a closed set $K \subset \R$, we denote by $\mathcal{H}^\lambda_K$ the (closed) subspace
of $\mathcal{H}^\lambda$ consisting of all functions with support in $K$.
The following theorem is contained in \cite[p.~186]{Tema79}.

\begin{theorem}
\label{t.Temam79}
Under the above hypotheses, if moreover $K$ is a compact subset of $\R$ and $\lambda > 0$,
the embedding of $\mathcal{H}^{\lambda}_K$ into $L^2 (\R,\mathcal{B})$ is compact.
\end{theorem}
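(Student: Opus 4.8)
The plan is to pass to the Fourier transform in $t$, use Plancherel's theorem (valid for functions with values in a Hilbert space) to reduce strong convergence in $L^2 (\R,\mathcal{B})$ to strong convergence of the Fourier transforms in the same space, and then to split the frequency variable into a bounded range, where the compactness of $\mathcal{B}_0 \hookrightarrow \mathcal{B}$ does the work, and a tail, where the fractional derivative supplies decay. Let $(u_k)$ be a bounded sequence in $\mathcal{H}^{\lambda}_K$. Since $\mathcal{H}^{\lambda}$ is a Hilbert space, hence reflexive, and $\mathcal{H}^{\lambda}_K$ is a closed subspace, a subsequence (still denoted $u_k$) converges weakly in $\mathcal{H}^{\lambda}$ to some $u \in \mathcal{H}^{\lambda}_K$; replacing $u_k$ by $u_k - u$ we may assume $u_k \to 0$ weakly in $\mathcal{H}^{\lambda}$, each $u_k$ supported in $K$, and $\| u_k \|_{\mathcal{H}^{\lambda}} \leq C$ for all $k$. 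It suffices to show $u_k \to 0$ strongly in $L^2 (\R,\mathcal{B})$, equivalently $\hat{u}_k \to 0$ strongly in $L^2 (\R,\mathcal{B})$; so I will estimate $\int_{\R} \| \hat{u}_k (\tau) \|_{\mathcal{B}}^2\, d\tau$ by splitting it at $|\tau| = M$.

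On the tail $|\tau| > M$ one has $|\tau|^{2\lambda} > M^{2\lambda}$, and since $\int_{\R} |\tau|^{2\lambda} \| \hat{u}_k (\tau) \|_{\mathcal{B}_1}^2\, d\tau$ is one of the two terms of $\| u_k \|_{\mathcal{H}^{\lambda}}^2$,
\[
   \int_{|\tau| > M} \| \hat{u}_k (\tau) \|_{\mathcal{B}_1}^2\, d\tau
 \leq \frac{1}{M^{2\lambda}} \int_{\R} |\tau|^{2\lambda} \| \hat{u}_k (\tau) \|_{\mathcal{B}_1}^2\, d\tau
 \leq \frac{C^2}{M^{2\lambda}} .
\]
Applying Lemma~\ref{l.basicle} to the triple $\mathcal{B}_0 \hookrightarrow \mathcal{B} \hookrightarrow \mathcal{B}_1$ pointwise in $\tau$ and squaring gives $\| \hat{u}_k (\tau) \|_{\mathcal{B}}^2 \leq 2 \epsilon^2 \| \hat{u}_k (\tau) \|_{\mathcal{B}_0}^2 + 2 c (\epsilon)^2 \| \hat{u}_k (\tau) \|_{\mathcal{B}_1}^2$ for every $\epsilon > 0$, so that, integrating over $|\tau| > M$ and using Plancherel on the first summand,
\[
   \int_{|\tau| > M} \| \hat{u}_k (\tau) \|_{\mathcal{B}}^2\, d\tau
 \leq 2 c_1 \epsilon^2\, C^2 + 2 c (\epsilon)^2\, \frac{C^2}{M^{2\lambda}}
\]
with $c_1$ the Plancherel constant. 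Given $\delta > 0$, first fix $\epsilon$ with $2 c_1 \epsilon^2 C^2 < \delta / 2$ and then $M$ with $2 c (\epsilon)^2 C^2 M^{-2\lambda} < \delta / 2$; thus $\int_{|\tau| > M} \| \hat{u}_k (\tau) \|_{\mathcal{B}}^2\, d\tau < \delta$ uniformly in $k$.

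It remains to treat the bounded range $|\tau| \leq M$, and here the compact support enters. Since $u_k$ vanishes off $K$ we have $\hat{u}_k (\tau) = \int_K e^{- \imath \tau t} u_k (t)\, dt$, so the Cauchy--Schwarz inequality gives $\| \hat{u}_k (\tau) \|_{\mathcal{B}_0} \leq |K|^{1/2} \| u_k \|_{L^2 (\R,\mathcal{B}_0)} \leq |K|^{1/2} C$, a bound uniform in $\tau$ and $k$; and since $u_k \to 0$ weakly in $L^2 (\R,\mathcal{B}_0)$, testing against the functionals $v \mapsto \langle \phi, \int_K e^{- \imath \tau t} v (t)\, dt \rangle$, $\phi \in \mathcal{B}_0'$, shows $\hat{u}_k (\tau) \to 0$ weakly in $\mathcal{B}_0$ for each fixed $\tau$. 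A bounded sequence in $\mathcal{B}_0$ converging weakly to zero converges strongly to zero in $\mathcal{B}$, because the embedding $\mathcal{B}_0 \hookrightarrow \mathcal{B}$ is compact; hence $\hat{u}_k (\tau) \to 0$ in $\mathcal{B}$ for every $\tau$, while $\| \hat{u}_k (\tau) \|_{\mathcal{B}}^2 \leq c_0^2 |K| C^2$ uniformly in $\tau$ and $k$, with $c_0$ the norm of that embedding, the right-hand side being integrable over $[-M,M]$. The dominated convergence theorem now yields $\int_{|\tau| \leq M} \| \hat{u}_k (\tau) \|_{\mathcal{B}}^2\, d\tau \to 0$ as $k \to \infty$. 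Combining the two ranges, $\limsup_{k} \int_{\R} \| \hat{u}_k (\tau) \|_{\mathcal{B}}^2\, d\tau \leq \delta$, and since $\delta$ was arbitrary the theorem follows. The step requiring most care --- and the main obstacle --- is exactly this bounded-frequency estimate: it is the compactness of $K$ that upgrades the $L^2$-in-$t$ bound to a pointwise-in-$\tau$ bound in $\mathcal{B}_0$, without which neither the weak convergence $\hat{u}_k (\tau) \to 0$ in $\mathcal{B}_0$ nor the uniform domination required for dominated convergence would be available.
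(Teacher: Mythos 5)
Your proof is correct and follows essentially the same route as the paper's: Plancherel, a frequency splitting at $|\tau|=M$ with the tail controlled by the $|\tau|^{2\lambda}$ weight, the bounded range handled via the compact support of $u_k$ (uniform pointwise bound on $\hat u_k(\tau)$ in $\mathcal{B}_0$, pointwise weak convergence, compactness of $\mathcal{B}_0\hookrightarrow\mathcal{B}$, dominated convergence), and Lemma~\ref{l.basicle} to interpolate. The only difference is organizational: the paper applies Lemma~\ref{l.basicle} once at the level of $L^2(\R,\cdot)$ to reduce everything to strong convergence in $L^2(\R,\mathcal{B}_1)$, whereas you apply it pointwise in $\tau$ on the high-frequency tail and work in $\mathcal{B}$ throughout.
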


\begin{proof}
Let $u_k$ be a bounded sequence in $\mathcal{H}^{\lambda}_K$.
We must show that $u_k$ contains a subsequence strongly convergent in $L^2 (\R,\mathcal{B})$.

Since $\mathcal{H}^{\lambda}$ is a Hilbert space, the sequence $u_k$ contains a subsequence
which converges weakly in this space.
To shorten notation, we continue to write $u_k$ for this subsequence.
It is clear that the limit function $u$ also belongs to $\mathcal{H}^{\lambda}$.
Therefore, on setting $v_k = u_k - u$ we see that $v_k$ is a bounded sequence in $\mathcal{H}^{\lambda}$,
which converges to zero weakly in $\mathcal{H}^{\lambda}$.
This means that
   $v_k$ converges to zero weakly in $L^2 (\R,\mathcal{B}_0)$, and
   $|\tau|^\lambda \hat{v}_k$ converges to zero weakly in $L^2 (\R,\mathcal{B}_1)$.
The theorem is proved if we show that $u_k$ converges to $u$ strongly in $L^2 (\R,\mathcal{B})$,
which is the same as $v_k \to 0$ strongly in $L^2 (\R,\mathcal{B})$.

Our next goal is to show that if $v_k$ converges to zero strongly in $L^2 (\R,\mathcal{B}_1)$
then the same is true in the norm of $L^2 (\R,\mathcal{B})$.
Due to Lemma \ref{l.basicle}, for each $\epsilon > 0$ there is a constant $c (\epsilon)$ such
that
$$
   \| v_k \|_{L^2 (\R,\mathcal{B})}
 \leq
   \epsilon\, \| v_k \|_{L^2 (\R,\mathcal{B}_0)}
 + c (\epsilon)\, \| v_k \|_{L^2 (\R,\mathcal{B}_1)}
$$
for all $k$.
Since the sequence $v_k$ is bounded in $L^2 (\R,\mathcal{B}_0$, it follows that
$$
   \| v_k \|_{L^2 (\R,\mathcal{B})}
 \leq
   \epsilon\, C
 + c (\epsilon)\, \| v_k \|_{L^2 (\R,\mathcal{B}_1)},
$$
with $C$ a constant independent of $k$.
On passing to the limit in this inequality we obtain
$$
   \limsup_{k \to \infty} \| v_k \|_{L^2 (\R,\mathcal{B})} \leq \epsilon\, C.
$$
Since $\epsilon > 0$ is arbitrary small in Lemma \ref{l.basicle}, the upper limit
must be equal to zero, as desired.

It remains to establish that $v_k$ converges to zero strongly in $L^2 (\R,\mathcal{B}_1)$.
To this end, write
$$
   I_k
 = \int_{- \infty}^{+ \infty} \| v_k (t) \|_{\mathcal{B}_1}^2 dt
 = \frac{1}{2 \pi} \int_{- \infty}^{+ \infty} \| \hat{v}_k (\tau) \|_{\mathcal{B}_1}^2 d \tau,
$$
which is due to the Plancherel theorem.
We have to show that $I_k$ tends to zero, as $k \to \infty$.
We get
\begin{eqnarray*}
   I_k
 & = &
   \frac{1}{2 \pi}
   \int_{|\tau| \leq R}
   \| \hat{v}_k (\tau) \|_{\mathcal{B}_1}^2
   d \tau
 + \frac{1}{2 \pi}
   \int_{|\tau| > R}
   \frac{\langle \tau \rangle^{2 \lambda}}{\langle \tau \rangle^{2 \lambda}}\,
   \| \hat{v}_k (\tau) \|_{\mathcal{B}_1}^2
   d \tau
\\
 & \leq &
   \frac{1}{2 \pi}
   \int_{|\tau| \leq R}
   \| \hat{v}_k (\tau) \|_{\mathcal{B}_1}^2
   d \tau
 + \frac{C}{\langle R \rangle^{2 \lambda}},
\end{eqnarray*}
for $v_k$ is bounded in $\mathcal{H}^{\lambda}$.
Here,
   $\langle \tau \rangle := (1 + |\tau|^2)^{1/2}$.
Given any $\epsilon > 0$, we choose $R$ sufficiently large, so that
$$
   \frac{C}{\langle R \rangle^{2 \lambda}}
 \leq
   \frac{\epsilon}{2}.
$$
Then
$$
   I_k
 \leq
   \frac{1}{2 \pi} \int_{|\tau| \leq R} \| \hat{v}_k (\tau) \|_{\mathcal{B}_1}^2 d \tau
 + \frac{\epsilon}{2},
$$
and the proof is completed by showing that, for this fixed $R$,
\begin{equation}
\label{eq.atpicbs}
   \int_{|\tau| \leq R} \| \hat{v}_k (\tau) \|_{\mathcal{B}_1}^2 d \tau
 \to
   0,
\end{equation}
as $k \to \infty$.
This is proved by the Lebesgue theorem.
If $\chi$ denotes the characteristic function of $K$, then $v_k = \chi v_k$ and
$$
   \hat{v}_k (\tau)
 = \int_{-\infty}^{+\infty} e^{- \imath \tau t} \chi (t) v_k (t) dt.
$$
Thus,
$$
   \| \hat{v}_k (\tau) \|_{\mathcal{B}_1}
 \leq
   \| e^{- \imath \tau t} \chi \|_{L^2 (\R)}
   \| v_k \|_{L^2 (\R,\mathcal{B}_1)}
$$
which is dominated by a constant independent of both $\tau$ and $k$.
On the other hand, if
   $\tau$ is fixed and
   $w$ an arbitrary element of $\mathcal{B}_0$,
then
$$
   (\hat{v}_k (\tau), w)_{\mathcal{B}_0}
 = \int_{-\infty}^{+\infty} (v_k (t), e^{- \imath \tau t} \chi (t) w)_{\mathcal{B}_0} dt,
$$
which tends to zero, as $k \to \infty$, for $v_k \to 0$ weakly in $L^2 (\R,\mathcal{B}_0)$.
It follows that the sequence $\hat{v}_k (\tau)$ converges to zero weakly in $\mathcal{B}_0$,
and so strongly in $\mathcal{B}$ and $\mathcal{B}_1$.
With this latter remark, the Lebesgue theorem on dominated convergence implies (\ref{eq.atpicbs}),
as desired.
\end{proof}

Using the methods of the last theorem, we are in a position to prove another compactness
theorem similar to Theorem \ref{t.tptteic}.
Nevertheless, this theorem is not contained in nor itself contains Theorem \ref{t.Temam79}.

\begin{theorem}
\label{t.tptteil}
Assume that
  $\mathcal{B}_0$,
   $\mathcal{B}$ and
   $\mathcal{B}_1$
are Hilbert spaces with continuous embeddings
   $\mathcal{B}_0 \hookrightarrow \mathcal{B} \hookrightarrow \mathcal{B}_1$,
the embedding of $\mathcal{B}_0$ into $\mathcal{B}$ being compact,
   and $I = (0,T)$ a finite interval.
Then the embedding of $\mathcal{F}$ with $p_0 = 2$ and $p_1 = 1$ into $L^2 (I,\mathcal{B})$
is compact.
\end{theorem}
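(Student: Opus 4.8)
The plan is to imitate the proof of Theorem~\ref{t.Temam79}, working on the Fourier side, the only real novelty being that the time derivative of the functions under consideration is now merely of class $L^1$. Let $(u_k)$ be a bounded sequence in the space $\mathcal{F}$ with $p_0 = 2$ and $p_1 = 1$; we must extract a subsequence converging strongly in $L^2 (I,\mathcal{B})$. First I would record that $\mathcal{F}$ still embeds continuously into $C (\overline{I},\mathcal{B}_1)$. Indeed, since $u_k \in L^2 (I,\mathcal{B}_0) \hookrightarrow L^1 (I,\mathcal{B}_1)$ and $u_k' \in L^1 (I,\mathcal{B}_1)$, Lemma~\ref{l.primiti} shows that $u_k$ is a.e. equal to a $\mathcal{B}_1\,$-valued primitive of $u_k'$; averaging the identity $u_k (t) = u_k (s) + \int_s^t u_k' (r)\, dr$ over $s \in I$ then yields a bound $\| u_k \|_{C (\overline{I},\mathcal{B}_1)} \leq M$ independent of $k$. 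Next, extend each $u_k$ by zero outside $I$, obtaining functions $w_k : \R \to \mathcal{B}_0$ supported in the compact set $K = \overline{I}$, with $\| w_k \|_{L^2 (\R,\mathcal{B}_0)} = \| u_k \|_{L^2 (I,\mathcal{B}_0)}$ bounded uniformly in $k$.

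The core step is to estimate the Fourier transform $\hat{w}_k$. The distributional derivative of $w_k$ equals the zero extension of $u_k'$ plus the boundary masses $u_k (0)\, \delta_0 - u_k (T)\, \delta_T$, whence
\[
   \| \tau\, \hat{w}_k (\tau) \|_{\mathcal{B}_1}
 \leq
   \| u_k' \|_{L^1 (I,\mathcal{B}_1)} + 2 M
\]
for every $\tau \in \R$, the bound being uniform in $k$. Fix any $\lambda$ with $0 < \lambda < 1/2$. Splitting $\int_{\R} |\tau|^{2 \lambda}\, \| \hat{w}_k (\tau) \|_{\mathcal{B}_1}^2\, d\tau$ at $|\tau| = 1$, bounding the low-frequency part by the Plancherel theorem together with the continuity of $\mathcal{B}_0 \hookrightarrow \mathcal{B}_1$, and bounding the high-frequency part by the displayed inequality times $\int_{|\tau| > 1} |\tau|^{2 \lambda - 2}\, d\tau$, which is finite precisely because $\lambda < 1/2$, I would conclude that $(w_k)$ is bounded in $\mathcal{H}^{\lambda}_K$.

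It then remains to invoke Theorem~\ref{t.Temam79}: the embedding of $\mathcal{H}^{\lambda}_K$ into $L^2 (\R,\mathcal{B})$ is compact, so some subsequence of $(w_k)$ converges in $L^2 (\R,\mathcal{B})$, and its restriction to $I$ is the sought subsequence of $(u_k)$, convergent in $L^2 (I,\mathcal{B})$. Alternatively, one may replay the frequency-truncation argument from the proof of Theorem~\ref{t.Temam79} verbatim, passing from $\mathcal{B}_1$ to $\mathcal{B}$ by means of Lemma~\ref{l.basicle}.

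The step I expect to be the main obstacle is the passage to the space $\mathcal{H}^{\lambda}$, and the difficulty is intrinsic to the hypothesis $p_1 = 1$: a bounded subset of $L^1 (I,\mathcal{B}_1)$ need not be uniformly integrable, so the pointwise-in-time endgame used in the proof of Theorem~\ref{t.tptteic} is no longer available here. Extending by zero forces the boundary masses $u_k (0)\, \delta_0$ and $u_k (T)\, \delta_T$ into the derivative, which is exactly what the $C (\overline{I},\mathcal{B}_1)\,$-bound is needed to control, and it leaves us with only an $L^{\infty}\,$-in-$\tau$, rather than $L^2\,$-in-$\tau$, bound on $\tau\, \hat{w}_k$. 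This is why only fractional time derivatives of order $\lambda < 1/2$ are at our disposal, which is nevertheless just enough to run the compactness argument, and is also the reason why the statement requires $p_0 = 2 < \infty$ and does not extend to the space $C (\overline{I},\mathcal{B})$.
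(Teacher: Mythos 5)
Your proposal is correct and follows essentially the same route as the paper: extend by zero, identify the Dirac masses $u_k(0)\,\delta_0 - u_k(T)\,\delta_T$ in the distributional derivative, use the $L^1$ bound on $u_k'$ and the $C(\overline{I},\mathcal{B}_1)$ bound to get a uniform-in-$\tau$ estimate on $\tau\,\hat{v}_k(\tau)$ in $\mathcal{B}_1$, deduce boundedness in $\mathcal{H}^{\lambda}_{K}$ for $\lambda < 1/2$, and invoke Theorem \ref{t.Temam79}. Your splitting of the frequency integral at $|\tau| = 1$ is just a cosmetic variant of the paper's interpolation inequality $|\tau|^{2\lambda} \leq c(\lambda)\,(1+|\tau|^2)/(1+|\tau|^{2(1-\lambda)})$, and your averaging argument makes explicit the embedding $\mathcal{F} \hookrightarrow C(\overline{I},\mathcal{B}_1)$ that the paper only asserts.
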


\begin{proof}
Let $u_k$ be a bounded sequence in the space $\mathcal{F}$.
Denote by $v_k$ the function defined on all of $\R$, which is equal to $u_k$ on $[0,T]$
and to zero outside this interval.
By Theorem \ref{t.Temam79}, we shall have established the theorem if we show that the
sequence $v_k$ remains bounded in the space $\mathcal{H}^\lambda$ with some $\lambda > 0$.

By Lemma \ref{l.primiti}, each function $u_k$ is, after possible modification on a set of
measure zero, continuous on $[0,T]$ with values in $\mathcal{B}_1$.
More precisely, the embedding of $\mathcal{F}$ into $C (\overline{I},\mathcal{B}_1)$ is
continuous.
Since $v_k$ has two discontinuities at $0$ and $T$, the distribution derivative of $v_k$
is given by
$$
   v_k' = w_k + u_k (0) \delta_0 - u_k (T) \delta_T,
$$
where
   $\delta_0$ and $\delta_T$ are the Dirac distributions at $0$ and $T$,
and
   $w_k$ the derivative of $u_k$ on $[0,T]$ extended by zero to the complement of $[0,T]$.
After a Fourier transformation we get
\begin{equation}
\label{eq.aaFtwge}
   (\imath \tau)\, \hat{v}_k (\tau)
 = \hat{w}_k (\tau) + u_k (0) - u_k (T)\, \exp (- \imath T \tau)
\end{equation}
for all $\tau \in \R$.

Since the functions $u_k'$ remain bounded in $L^1 (I,\mathcal{B}_1)$, the functions $w_k$
are bounded in $L^1 (\R,\mathcal{B}_1)$ and the functions $\hat{w}_k$ are bounded in the
space $L^{\infty} (\R,\mathcal{B}_1)$, i.e.,
$$
   \| \hat{w}_k (\tau) \|_{\mathcal{B}_1} \leq C
$$
for all $\tau \in \R$ and all $k$.
As mentioned, the embedding of $\mathcal{F}$ into $C (\overline{I},\mathcal{B}_1)$ is
continuous whence
$$
\begin{array}{rcl}
   \| u_k (0) \|_{\mathcal{B}_1}
 & \leq
 & c,
\\
   \| u_k (T) \|_{\mathcal{B}_1}
 & \leq
 & c
\end{array}
$$
and (\ref{eq.aaFtwge}) implies that
$$
   |\tau|^2\, \| \hat{v}_k (\tau) \|_{\mathcal{B}_1}^2
 \leq
   (C + 2 c)^2
$$
for all $\tau \in \R$ and all $k$.

Fix any positive $\lambda$ satisfying
   $\displaystyle \lambda < \frac{1}{2}$.
From the inequality
$$
   |\tau|^{2 \lambda}
 \leq
   c (\lambda)\, \frac{1 + |\tau|^2}{1 + |\tau|^{2 (1-\lambda)}}
$$
for all $\tau \in \R$,
   the constant $c (\lambda)$ depending on $\lambda$,
it follows that
\begin{eqnarray*}
   \int_{-\infty}^{+\infty}
   |\tau|^{2 \lambda}\, \| \hat{v}_k (\tau) \|_{\mathcal{B}_1}^2
   d \tau
 & \leq &
   c (\lambda)
   \int_{-\infty}^{+\infty}
   \frac{1 + |\tau|^2}{1 + |\tau|^{2 (1-\lambda)}}\,
   \| \hat{v}_k (\tau) \|_{\mathcal{B}_1}^2
   d \tau
\\
 & \leq &
   c (\lambda)
   \Big(
   \int_{-\infty}^{+\infty}
   \| \hat{v}_k (\tau) \|_{\mathcal{B}_1}^2
   d \tau
 + \int_{-\infty}^{+\infty}
   \frac{(C + 2 c)^2}{1 + |\tau|^{2 (1-\lambda)}}\,
   d \tau
   \Big).
\end{eqnarray*}
By the Plancherel theorem, we see that
$$
   \int_{-\infty}^{+\infty}
   \| \hat{v}_k (\tau) \|_{\mathcal{B}_1}^2
   d \tau
 = \int_0^T
   \| v_k (t) \|_{\mathcal{B}_1}^2
   dt
$$
and these integrals are bounded uniformly in $k$.
On the other hand, since
   $\displaystyle \lambda < \frac{1}{2}$,
the integral
$$
   \int_{-\infty}^{+\infty}
   \frac{1}{1 + |\tau|^{2 (1-\lambda)}}\,
   d \tau
$$
is convergent.
Summarising we conclude that
$$
   \int_{-\infty}^{+\infty}
   |\tau|^2\, \| \hat{v}_k (\tau) \|_{\mathcal{B}_1}^2
   d \tau
 \leq
   C,
$$
where $C$ depends on $\lambda$.
It is now clear that the sequence $v_k$ is bounded in $\mathcal{H}^\lambda$ and each
$v_k$ has support in $[0,T]$.
\end{proof}

Assuming only that
   $\mathcal{B}_0$ and $\mathcal{B}$ are Banach spaces and
   $\mathcal{B}_1$ a Hilbert space,
it can be proved in a similar way that the embedding of $\mathcal{F}$ with
   $1 < p_0 < \infty$ and
   $p_1 = 1$
into $L^{p_0} (I,\mathcal{B})$ is compact.

\section{Existence of a weak solution}
\label{s.exiweak}

This section is concerned with existence theorem for a weak solution of the regularised
Navier-Stokes equations.
We start with a weak formulation of these equations, following \cite{Lera34a}, and
state an existence theorem for such solution.
The proof of this theorem is due to \cite{Lion69}, see also \cite{Lady70}.
It is based on the construction of an approximate solution by the Galerkin method,
followed by a passage to the limit using, in particular, an a priori estimate for a
fractional derivative in time of the approximate solution and a compactness theorem
contained in Section \ref{s.compact}.
Then we develop the uniqueness theorem for weak solutions provided that $\varepsilon > 0$.
If $\varepsilon = 0$, the arguments still go in the case $n \leq 2$.
In the case of $n > 2$ there is a gap between the class of functions where existence
is known, and the smaller classes where uniqueness is proved.
For $\varepsilon > 0$, we show the existence of more regular solutions, assuming more
regularity on the data.
A similar result holds in the conventional case $\varepsilon = 0$ for local solutions,
i.e., those defined on a ``small'' time interval.

Recall that we define on $H^m_0 (\cX,F^i)$, and in particular on $S$, a trilinear
form by setting
$$
   (N (u,v),w) = \int_{\cX} (T (v) u, w)_x dx.
$$

\begin{lemma}
\label{l.(N(uv)w)}
The form $(N (u,v),w)$ is defined and trilinear continuous on the Cartesian product
   $H^m_0 (\cX,F^i) \times H^m_0 (\cX,F^i) \times H^m_0 (\cX,F^i)$,
if $m \geq (n+2)/4$, $n$ being the dimension of $\cX$.
\end{lemma}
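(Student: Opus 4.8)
The plan is to deduce the asserted trilinear continuity from the stronger fact that the bilinear map $(u,v) \mapsto N (u,v)$ already takes $H^m_0 (\cX,F^i) \times H^m_0 (\cX,F^i)$ continuously into $L^2 (\cX,F^i)$. Granting this, one has
$
   (N (u,v),w) = (N (u,v),w)_{L^2 (\cX,F^i)},
$
so the Schwarz inequality together with the continuous embedding $H^m_0 (\cX,F^i) \hookrightarrow L^2 (\cX,F^i)$ immediately yields
$$
   |(N (u,v),w)| \leq C\, \| u \|_{H^m}\, \| v \|_{H^m}\, \| w \|_{H^m};
$$
trilinearity itself is evident, since $T$ is linear and $(\cdot,\cdot)_x$ is sesquilinear.

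First I would record the local structure of the nonlinearity. As $T$ is a first order differential operator sending sections of $F^i$ to sections of $\mathrm{Hom}\, F^i$, over any coordinate patch trivialising $F^i$ one has $T (v) = \sum_{|\alpha| \leq 1} a_\alpha (x)\, \partial^\alpha v$ with smooth, hence locally bounded, coefficients; covering $\cX$ by finitely many such patches, the pointwise bound
$$
   |N (u,v) (x)| = |T (v) (x)\, u (x)| \leq C\, \big( |v (x)| + |\nabla v (x)| \big)\, |u (x)|
$$
holds on all of $\cX$ with a finite constant $C$, because $\cX$ is compact. Applying Hölder's inequality with exponents $p, q$ subject to $1/p + 1/q = 1/2$ gives
$$
   \| N (u,v) \|_{L^2 (\cX,F^i)} \leq C\, \| v \|_{W^{1,p} (\cX,F^i)}\, \| u \|_{L^q (\cX,F^i)},
$$
so it remains to pick $p$ and $q$ so that the two factors on the right are each dominated by the $H^m\,$-norm.

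This is precisely the step where $m \geq (n+2)/4$ is used. Generically, i.e. when $m - 1 < n/2$, I would take $p$ with $1/p = 1/2 - (m-1)/n$, so that the (critical) Sobolev embedding on the compact manifold $\cX$ gives $H^m (\cX,F^i) \hookrightarrow W^{1,p} (\cX,F^i)$, and then $q$ with $1/q = 1/2 - 1/p = (m-1)/n$; the embedding $H^m (\cX,F^i) \hookrightarrow L^q (\cX,F^i)$ holds as soon as $1/q \geq 1/2 - m/n$, which is $1/p \leq m/n$, which is $1/2 \leq (2m-1)/n$, that is, $m \geq (n+2)/4$. Substituting the two Sobolev inequalities into the previous display gives $\| N (u,v) \|_{L^2 (\cX,F^i)} \leq C\, \| u \|_{H^m}\, \| v \|_{H^m}$, whence the lemma.

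The only real work is this exponent bookkeeping, and the only places calling for a moment's care are the edges. If $m \geq (n+2)/2$ (so $m - 1 \geq n/2$) one simply takes $p = \infty$, or $p$ large and finite when $m - 1 = n/2$, and $q = 2$. At the borderline $m = (n+2)/4$ with $n > 2$ both exponents above are critical, but the embeddings persist since $m - 1 < n/2$ keeps one strictly below the $L^\infty$ endpoint; for $n = 1$ the generic choice already works. The lone genuine exception is $n = 2$, $m = 1$, where the generic formula would force $q = \infty$; since this falls under the classically tractable range $n \leq 2$, one handles it instead by taking $p$ slightly below $2$ and $q$ large but finite, using that $L^2 (\cX)$ embeds in every $L^p (\cX)$ and that $H^1 (\cX,F^i) \hookrightarrow L^q (\cX,F^i)$ for every finite $q$. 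Finally, it is worth noting that the very same $L^2$-bound, carried over to the cylinder $\cC$, is what underlies the assertion invoked earlier that $N$ maps $H^{\bs (1)} (\cC,F^i)$ continuously, and (through a compact Sobolev embedding) compactly, into $L^2 (\cC,F^i)$.
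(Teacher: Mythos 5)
Your route differs from the paper's: you aim at the stronger statement that $(u,v)\mapsto N(u,v)=T(v)u$ is bounded from $H^m_0(\cX,F^i)\times H^m_0(\cX,F^i)$ into $L^2(\cX,F^i)$ and then pair with $w$ by the Schwarz inequality, whereas the paper keeps $T(v)$ in $L^2$ and applies the Sobolev multiplication theorem $H^m\times H^m\to L^2$ to the pair $(u,w)$, i.e.\ it distributes the integrability over all three factors at once. For $n=1$ and for all $n\geq 3$ your exponent bookkeeping is correct, including at the borderline $m=(n+2)/4$, where both embeddings are critical but valid because $m<n/2$.

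The gap is the case $n=2$, $m=1$, which is admissible ($m=(n+2)/4=1$) and is precisely the case the paper later singles out to recover the classical two-dimensional theory. There your intermediate claim is false: for $v\in H^1$ one only has $T(v)\in L^2$, and $u\in H^1(\cX)$ with $\dim\cX=2$ lies in every $L^q$ with $q<\infty$ but not in $L^\infty$, so $T(v)u$ lies in $L^r$ for every $r<2$ but in general not in $L^2$ (take $u$ unbounded in $H^1$ and $|T(v)|^2$ an integrable weight whose mass concentrates where $|u|$ is large). Your proposed repair --- $p$ slightly below $2$ and $q$ large but finite --- is incompatible with the constraint $1/p+1/q=1/2$ that your H\"older step requires, so no choice of exponents rescues the $L^2$ bound on $N(u,v)$; the bilinear map simply does not land in $L^2$ in this case. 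The cure is to give up on putting $N(u,v)$ in $L^2$ and to estimate the trilinear form directly with a three-factor H\"older inequality, e.g.
$|(T(v)u,w)|\le \|T(v)\|_{L^2}\,\|u\|_{L^4}\,\|w\|_{L^4}\le c\,\|v\|_{H^1}\|u\|_{H^1}\|w\|_{H^1}$
using $H^1(\cX)\hookrightarrow L^4(\cX)$ in dimension two; this is exactly what the paper's appeal to the multiplication theorem $H^m\times H^m\to L^2$, applied to the product $u\,\overline{w}$, accomplishes uniformly in $n$.
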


\begin{proof}
We use here a well-known result on pointwise multiplication of functions in Sobolev
spaces.
Namely, let $s$ and $s_1$, $s_2$ be real numbers satisfying
   $s_1 + s_2 \geq 0$.
   $s_1, s_2 \geq s$ and
   $s_1 + s_2 -  s > n/2$,
where the strictness of the last two inequalities can be interchanged if $s = 0, 1, \ldots$.
Then the pairwise multiplication of functions extends to a continuous bilinear mapping
   $H^{s_1} \times H^{s_2} \to H^s$.
We apply this result with $s = 0$ and $s_1 = s_2 = m$.
The condition $s_1 + s_2 -  s \geq n/2$ is obviously fulfilled, for $m \geq (n+2)/4$.
In coordinate patches on $\cX$, over which the bundle $F^i$ is trivial, we get
$$
   (T (v) u, w)_x
 = \sum_{j = 1, \ldots, k_i \atop
         k = 1, \ldots, k_i}
   T_{j k} (v) u_k \overline{w}_j,
$$
where
   $k_i$ is the rank of $F^i$,
   $u_k$ and $v_j$ are coordinates of $u$ and $v$ relative to a local frame of $F^i$,
   respectively,
and
   $T_{j k}$ are first order scalar differential operators in local coordinates.
Hence,
\begin{eqnarray*}
   | (N (u,v), w) |
 & \leq &
   c\,
   \| Tv \|_{L^2 (\cX,\mathrm{Hom}\, F^i)} \| u \|_{H^m (\cX,F^i)} \| w \|_{H^m (\cX,F^i)}
\\
 & \leq &
   c\, \sqrt{D (u)} \sqrt{D (v)} \sqrt{D (w)}
\end{eqnarray*}
for all $u, v, w \in H^m_0 (\cX,F^i)$, where
   $c$ is a constant independent of $u$, $v$ and $w$ and not necessarily the same in diverse
   applications.
This estimate ensures the continuity of the trilinear form.
\end{proof}

In particular, the form $(N (u,v),w)$ is well defined and trilinear continuous on
   $S \times S \times S$,
if $m \geq (n+2)/4$.

If
   $A^\ast u = 0$ in $\cX$ and
   $n (u) = 0$ on the boundary of $\cX$,
then
   $(N (u,v),v) = 0$ for all $v \in H^m_0 (\cX,F^i)$,
which is due to Corollary \ref{c.algestr}.
For $u, v \in S$, the section $N (u,v)$ can be thought of as a continuous linear
functional on $S$.
Set $N (u) = N (u,u) \in S'$ for $u \in S$.

In the classical formulation the first mixed problem for the generalised Navier-Stokes
equations consists in finding sections
   $u$ and
   $p$
of $F^i$ and $F^{i-1}$ over $\cC$, respectively, such that
\begin{equation}
\label{eq.classNS}
\begin{array}{rcl}
   \partial_t u + (\varepsilon \iD^m + \nu \iD) u + N (u) + Ap
 & =
 & f,
\\
   A^\ast u
 & =
 & 0
 \end{array}
\end{equation}
in the cylinder $\cC$ and
\begin{equation}
\label{eq.classmc}
\begin{array}{rclcl}
   u (\cdot,0)
 & =
 & u_0
 & \mbox{on}
 & \cX,
\\
   u
 & =
 & 0
 & \mbox{up to order $m$ on}
 & \partial \cX \times I,
 \end{array}
\end{equation}
where
   $f$ and $u_0$ are given sections of $F^i$ over the cylinder and its bottom $\cX$
respectvely.
Conditions (\ref{eq.classmc}) specify the first mixed problem for evolution equations
(\ref{eq.classNS}).

Let $(u,p)$ be a classical solution of problem (\ref{eq.classNS}), (\ref{eq.classmc}),
say
   $u \in C^{2m} (\overline{\cC},F^i)$ and
   $p \in C^1 (\overline{\cC},F^{i-1})$.
Then one can check easily that
$$
   (u'_t, v) + (Vu, v) + (N (u), v) = (f, v)
$$
for each element $v$ of $\mathcal{S}$.
By continuity, this equation holds also for each $v \in S$.
Since
   $(u'_t, v) = \partial_t\, (u,v)$,
this suggests the following weak formulation of problem (\ref{eq.classNS}), (\ref{eq.classmc}),
cf. \cite{Lera33,Lera34a,Lera34b}.
Given sections
   $f \in L^2 (I,S')$ and
   $u_0 \in H$,
find $u \in L^2 (I,S)$ such that
\begin{equation}
\label{eq.NSweakf}
\begin{array}{rclcl}
   \partial_t\, (u,v) + (Vu,v) + (N (u),v)
 & =
 & (f,v)
 & \mbox{for all}
 & v \in S,
\\
   u (0)
 & =
 & u_0.
 &
 &
 \end{array}
\end{equation}
(By (\ref{eq.Stokes}), any solution to this problem leads to a solution in some weak
 sense of problem (\ref{eq.classNS}), (\ref{eq.classmc}).)

If a section $u$ merely belongs to $L^2 (I,S)$, the condition $u (0) = u_0$ need not make
sense.
But if $u$ belongs to $L^2 (I,S)$ and satisfies the variational equation of (\ref{eq.NSweakf})
then we can argue as in the linear case (using Lemma \ref{l.primiti}) to see that $u$ is
equal a.e. to some continuous function, so that $u (0) = u_0$ is meaningful.
Before showing this, we prove a lemma.

\begin{lemma}
\label{l.lapwtsB}
Suppose that $u \in L^2 (I,S)$.
Then the function $N (u)$ defined by
   $(N (u) (t),v) = (N (u (t),u (t)),v)$,
for almost all $t \in I$ and all $v \in S$, belongs to $L^1 (I,S')$.
\end{lemma}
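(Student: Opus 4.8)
The plan is to combine the pointwise estimate of Lemma~\ref{l.(N(uv)w)} with a measurability argument and then integrate in $t$. First I would note that since $u \in L^2 (I,S)$, one has $u (t) \in S$ for almost all $t \in I$, so the defining relation $(N (u)(t),v) = (N (u(t),u(t)),v)$ makes sense for such $t$ and all $v \in S$. The form $v \mapsto (N (u(t),u(t)),v)$ is (anti)linear on $S$, and the estimate established in the proof of Lemma~\ref{l.(N(uv)w)} gives
$$
   | (N (u(t),u(t)),v) |
 \leq
   c\, \sqrt{D (u(t))}\, \sqrt{D (u(t))}\, \sqrt{D (v)}
 = c\, D (u(t))\, \sqrt{D (v)}
$$
for all $v \in S$, where $\sqrt{D (\cdot)}$ is, by Lemma~\ref{l.Dirichl}, a norm on $S$ equivalent to the one inherited from $H^m (\cX,F^i)$. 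Hence $N (u)(t)$, the element of $S'$ represented by this form, satisfies the pointwise bound
$$
   \| N (u)(t) \|_{S'} \leq c\, D (u(t))
$$
for almost all $t \in I$.

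Next I would verify that $t \mapsto N (u)(t)$ is a measurable $S'\,$-valued function. The bilinear map $S \times S \to S'$ sending $(a,b)$ to $N (a,b)$ is continuous — this is precisely the content of Lemma~\ref{l.(N(uv)w)} restricted to $S$. Since $u \in L^2 (I,S)$ is measurable, so is $t \mapsto (u(t),u(t)) \in S \times S$, and the composition of a measurable map with a continuous one is measurable; therefore $t \mapsto N (u(t),u(t)) = N (u)(t)$ is measurable from $I$ into $S'$. (Alternatively one may invoke the Pettis measurability theorem, $S'$ being separable, together with the measurability of $t \mapsto (N (u)(t),v)$ for each fixed $v \in S$, which again follows from the continuity of $N$ in its arguments.)

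Finally, integrating the pointwise bound over $I$ and using that $t \mapsto D (u(t))$ is integrable because $u \in L^2 (I,S)$, I obtain
$$
   \int_I \| N (u)(t) \|_{S'}\, dt
 \leq
   c \int_I D (u(t))\, dt
 = c\, \| u \|_{L^2 (I,S)}^2
 < \infty,
$$
so $N (u) \in L^1 (I,S')$, as claimed.

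I expect the only point requiring any care to be the measurability of $N (u)(\cdot)$ as an $S'\,$-valued function; everything else is an immediate consequence of the continuity estimate in Lemma~\ref{l.(N(uv)w)}, the equivalence of norms in Lemma~\ref{l.Dirichl}, and the definition of the Bochner space $L^2 (I,S)$. Even this point is routine here, precisely because the nonlinearity extends to a genuinely continuous bilinear map on $S \times S$, so no truncation or density argument is needed — in contrast to the classical setting, where $N$ is only defined on a dense subspace and some approximation would be required.
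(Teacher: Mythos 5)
Your proof is correct and follows essentially the same route as the paper: the pointwise bound $\| N (u)(t) \|_{S'} \leq c\, D (u (t))$ coming from the trilinear continuity of $(N (u,v),w)$ on $S$, followed by integration in $t$ using $u \in L^2 (I,S)$. The only difference is that you spell out the measurability of $t \mapsto N (u)(t)$, which the paper dismisses as ``easily verified''; your argument via continuity of the bilinear map $S \times S \to S'$ is a perfectly adequate way to fill that in.
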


\begin{proof}
For almost all $t \in I$, the value $N (u) (t)$ is an element of $S'$, and the measurability
of the function $N (u)$ of $t \in I$ with values in $S'$ is easily verified.
Moreover, since $(N (u,v),w)$ is a continuous trilinear form on the space $S$, we conclude
immediately that
$$
   \| N (v) \|_{S'} \leq c\, D (v)
$$
for all $v \in S$.
Therefore,
$$
   \int_0^T \| N (u) (t) \|_{S'} dt
 \leq
   c\, \int_0^T D (u) (t)\, dt
 < \infty,
$$
and the lemma is proved.
\end{proof}

Now, if $u \in L^2 (I,S)$ satisfies (\ref{eq.NSweakf}), then, according to the above
lemma, one can write (\ref{eq.NSweakf}) as
$$
   \partial_t\, (u,v)
 = (f - Vu - N (u),v)
$$
for all $v \in S$.
Since $Vu$ belongs to $L^2 (I,S')$, as in the linear case, the function $f - Vu - N (u)$
belongs to $L^1 (I,S')$.
Lemma \ref{l.primiti} implies that
   $u' \in L^1 (I,S')$ and
   $u' = f - Vu - N (u)$,
and that $u$ is equal a.e. to a continuous function of $t \in \overline{I}$ with values
in $S'$.
This remark makes the initial condition $u (0) = u_0$ meaningful, as desired.

An alternative formulation of problem (\ref{eq.NSweakf}) reads as follows.
Let
   $f \in L^2 (I,S')$ and
   $u_0 \in H$
be given sections.
Find a section
   $u \in L^2 (I,S)$ such that
   $u' \in L^1 (I,S')$
and
\begin{equation}
\label{eq.NSpowif}
\begin{array}{rclcl}
   u' + Vu + N (u)
 & =
 & f
 & \mbox{a.e. on}
 & I,
\\
   u (0)
 & =
 & u_0.
 &
 &
 \end{array}
\end{equation}
We have shown that any solution of problem (\ref{eq.NSweakf}) is a solution of
problem (\ref{eq.NSpowif}).
The converse is very easily checked and these problems are thus equivalent.
The existence of solutions of these problems is ensured by our next theorem, cf.
Theorem 3.1 of \cite[p.~191]{Tema79}.

\begin{theorem}
\label{t.exnonNS}
Assume that
   $f \in L^2 (I,S')$ and
   $u_0 \in H$
are arbitrary data.
Then there exists at least one function $u \in L^2 (I,S)$ satisfying
   $u' \in L^1 (I,S')$ and
   (\ref{eq.NSpowif}).
Moreover,
   $u \in L^\infty (I,H)$ and
   $u$ is a weakly continuous function of $t \in \overline{I}$ with values in $H$.
\end{theorem}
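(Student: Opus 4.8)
The plan is to adapt the Faedo--Galerkin scheme from the proof of Theorem~\ref{t.exiuniS}, the only genuinely new ingredient being a compactness argument allowing a passage to the limit in the nonlinear term. First I would fix a basis $(e_j)_{j=1,2,\ldots}$ of $S$, this time chosen to consist of the (smooth, solenoidal) eigenfunctions of the Friedrichs extension $V + \Id \colon S \to S'$; since the embedding $S \hookrightarrow H$ is compact, such a basis exists and is simultaneously orthogonal in $H$, in $S$ and in $S'$, so that the projection $\Pi_k$ onto $\mathrm{span}\,(e_1,\ldots,e_k)$ is a contraction of each of these three spaces. For $k = 1, 2, \ldots$ one seeks an approximate solution $u_k = \sum_{i=1}^{k} c_{k,i} (t)\, e_i$ of (\ref{eq.NSweakf}) determined by
\begin{equation*}
   (u_k' (t), e_j) + (V u_k (t), e_j) + (N (u_k (t)), e_j) = (f (t), e_j), \qquad j = 1, \ldots, k,
\end{equation*}
together with $u_k (0) = u_{0,k}$, where $u_{0,k} \to u_0$ in $H$. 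By Lemma~\ref{l.(N(uv)w)} this amounts to a system of ordinary differential equations for the $c_{k,i}$ with affine--quadratic right-hand side and $L^1$ dependence on $t$, so Carath\'eodory's theorem provides a local solution, which extends to all of $\overline{I}$ by virtue of the a priori bound below.

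\textbf{A priori estimates.} Multiplying the $j$-th equation by $c_{k,j} (t)$, summing over $j$ and taking real parts gives, exactly as in Theorem~\ref{t.exiuniS},
\begin{equation*}
   \partial_t\, \| u_k (t) \|_H^2 + 2 \left( D (u_k (t)) - \| u_k (t) \|_H^2 \right) + 2\, \Re\, (N (u_k (t)), u_k (t)) = 2\, \Re\, (f (t), u_k (t)).
\end{equation*}
Since $u_k (t)$ lies in $S$ it is solenoidal and has vanishing Cauchy data on $\partial \cX$, whence $(N (u_k (t)), u_k (t)) = 0$ by Corollary~\ref{c.algestr}. The nonlinear term thus drops out, and the estimates in the proof of Theorem~\ref{t.exiuniS} apply verbatim to produce a constant $C$ independent of $k$ with $\| u_k \|_{L^\infty (I,H)} \leq C$ and $\| u_k \|_{L^2 (I,S)} \leq C$; in particular $\| u_k (T) \|_H \leq C$. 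Furthermore, from the Galerkin equation $u_k' = \Pi_k (f - V u_k - N (u_k))$, where $\Pi_k$ has norm one on $S'$; since $N (u_k)$ is bounded in $L^1 (I,S')$ by Lemma~\ref{l.lapwtsB} and $f - V u_k$ is bounded in $L^2 (I,S') \subset L^1 (I,S')$, the sequence $u_k'$ is bounded in $L^1 (I,S')$.

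These two bounds place $(u_k)$ in a bounded subset of the space $\mathcal{F}$ of Theorem~\ref{t.tptteil} taken with $\mathcal{B}_0 = S$, $\mathcal{B} = H$, $\mathcal{B}_1 = S'$, $p_0 = 2$ and $p_1 = 1$ (the embedding $S \hookrightarrow H$ being compact). After passing to a subsequence we may thus assume that $u_k \to u$ weak$^\ast$ in $L^\infty (I,H)$, weakly in $L^2 (I,S)$ and --- by Theorem~\ref{t.tptteil} --- strongly in $L^2 (I,H)$, and hence, along a further subsequence, $u_k (t) \to u (t)$ in $H$ for almost every $t \in I$. Fixing $j$ and a scalar function $\phi \in C^1 (\overline{I})$ with $\phi (T) = 0$, multiplying the $j$-th Galerkin equation by $\phi$ and integrating by parts in $t$, the linear and initial terms pass to the limit by the above convergences and by $u_{0,k} \to u_0$ in $H$. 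For the nonlinear term I would invoke Corollary~\ref{c.algestr} once more: since $u_k (t)$ is solenoidal with vanishing Cauchy data,
\begin{equation*}
   (N (u_k (t)), e_j) = -\, (N (u_k (t), e_j), u_k (t)) = -\int_{\cX} (T (e_j)\, u_k (t), u_k (t))_x\, dx,
\end{equation*}
in which $T (e_j)$ is a fixed smooth bundle homomorphism; the same identity holds with $u_k (t)$ replaced by $u (t)$ for almost all $t$, since $u (t) \in S$. As $u_k \to u$ strongly in $L^2 (I,H)$, this quadratic expression converges, so $\int_0^T (N (u_k), e_j)\, \phi\, dt \to \int_0^T (N (u), e_j)\, \phi\, dt$. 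Passing to the limit now yields the variational equation of (\ref{eq.NSweakf}) tested against $e_j$, hence --- by linearity and density --- against every $v \in S$; as explained just before the theorem, this together with $u \in L^2 (I,S)$ forces $u' \in L^1 (I,S')$ and equation (\ref{eq.NSpowif}) a.e.\ on $I$, while the choice of $\phi$ with $\phi (0) \neq 0$ recovers $u (0) = u_0$. Finally, $u \in L^\infty (I,H)$ follows by weak$^\ast$ lower semicontinuity of the norm, and $u \in L^2 (I,S)$ with $u' \in L^1 (I,S')$ implies, via Lemma~\ref{l.primiti}, that $u$ agrees a.e.\ with an $S'$-valued continuous function; combined with $\| u (t) \|_H \leq C$ and the density of $S$ in $H$, this gives weak continuity of $u$ on $\overline{I}$ with values in $H$.

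The hard part will be the passage to the limit in $N (u_k)$: weak convergence of $u_k$ in $L^2 (I,S)$ does not suffice, and one genuinely needs the strong convergence in $L^2 (I,H)$ supplied by the compactness Theorem~\ref{t.tptteil}, whose applicability in turn rests on the uniform $L^1 (I,S')$ bound for $u_k'$ --- this is why the Galerkin basis must be adapted to $V + \Id$. The algebraic identity of Corollary~\ref{c.algestr}, which replaces the cubic expression $(N (u_k), e_j)$ by a quadratic one with smooth coefficient, is precisely what makes strong $L^2 (I,H)$ convergence enough.
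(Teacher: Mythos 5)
Your proposal is correct, and the overall skeleton (Galerkin approximation, cancellation $(N(u_k),u_k)=0$ via Corollary \ref{c.algestr}, the two energy bounds, strong $L^2(I,H)$ convergence to handle the nonlinearity, and the test-function argument for $u(0)=u_0$ and weak $H$-continuity) coincides with the paper's. The one place where you genuinely diverge is the compactness mechanism. The paper keeps an \emph{arbitrary} basis of $\mathcal{S}$ and, since the Galerkin projection is then not uniformly bounded on $S'$, it cannot control $u_k'$ directly; instead it extends $u_k$ by zero to $\R$, Fourier-transforms the Galerkin equations (picking up the boundary terms $u_k(0)$ and $u_k(T)$), derives the uniform bound $\int |\tau|^{2\lambda}\|\hat v_k(\tau)\|_H^2\,d\tau\leq C$ for $\lambda<1/4$, and invokes Theorem \ref{t.Temam79}. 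You instead choose the eigenfunctions of $V+\Id$ so that the projection $\Pi_k$ is simultaneously orthogonal in $H$ and in $S$, hence of norm one on $S'$; this gives the uniform bound $\|u_k'\|_{L^1(I,S')}\leq C$ from $u_k'=\Pi_k(f-Vu_k-N(u_k))$ and Lemma \ref{l.lapwtsB}, and you then invoke Theorem \ref{t.tptteil} with $p_0=2$, $p_1=1$. Both compactness theorems are proved in Section \ref{s.compact}, so your route is fully supported by the paper's toolkit; it trades the Fourier-transform computation for the requirement of a spectral basis (which the paper itself adopts later, in the proof of Theorem \ref{t.regtfNS}). One small point to keep in view: the eigenfunctions lie in $\cD_V\subset H^{2m}(\cX,F^i)$ rather than in $\mathcal{S}$, so in the identity $(N(u_k),e_j)=-\int_\cX (T(e_j)u_k,u_k)_x\,dx$ the coefficient $T(e_j)$ is bounded only because $2m-1\geq n/2$; this is the same regularity bookkeeping the paper waves at when it remarks that the $e_i$ ``are chosen in $\mathcal{S}$ for simplicity.''
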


Note that by the weak continuity of $u$ is meant that, for each $v \in H$, the
function $t \mapsto (u (t),v)$ is continuous on $I$.
The weak continuity result is a direct consequence of
   $u \in L^\infty (I,H)$,
   the continuity of $u$ in $t \in \overline{I}$ with values in $S'$,
and of
   Lemma 1.4 of \cite[p.~178]{Tema79}.

\begin{proof}
\textit{Approximate solution}.
We apply the Galerkin procedure as in the linear case.
Since
   $S$ is separable and
   $\mathcal{S}$ is dense in $S$,
there is a sequence of linearly independent elements
   $( e_i )_{i = 1, 2, \ldots}$
of $\mathcal{S}$, which is complete in $S$.
(It should be noted that the $e_i$ are chosen in $\mathcal{S}$ for simplicity.
 With some technical modifications we could take these elements in $S$.)
For every $k = 1, 2, \ldots$ we define an approximate solution of problem
   (\ref{eq.NSweakf})
by
$$
   u_k = \sum_{i=1}^k c_{k,i} (t) e_i
$$
and
\begin{equation}
\label{eq.appsNSc}
\begin{array}{rclcl}
   (u_k',e_j) + (V u_k,e_j) + (N (u_k),e_j)
 & =
 & (f,e_j)
 & \mbox{for all}
 & j = 1, \ldots, k,
\\
   u_k (0)
 & =
 & u_{0,k},
 &
 &
 \end{array}
\end{equation}
where $u_{0,k}$ is the orthogonal projection in $H$ of $u_0$ onto the space spanned by
the elements $e_1, \ldots, e_k$.
(We could take for $u_{0,k}$ any element of the space spanned by $e_1, \ldots, e_k$,
 such that $u_{0,k} \to u_0$ in the norm of $H$, as $k \to \infty$.)

Equations (\ref{eq.appsNSc}) form a system of nonlinear ordinary differenial equations for
the unknown scalar-valued functions $c_{k,i}$ on the interval $I$, where $1 \leq i \leq k$.
Indeed, we get
$$
   \sum_{i=1}^k (e_i,e_j)\, c_{k,i}' (t)
 + \sum_{i=1}^k (V e_i,e_j)\, c_{k,i} (t)
 + \sum_{l = 1, \ldots, k \atop
         m = 1, \ldots, k} (N (e_l,e_m),e_j)\, c_{k,l} (t) c_{k,m} (t)
 = (f,e_j)
$$
for all $j = 1, \ldots, k$.
On inverting the regular matrix with entries
   $(e_i,e_j)$,
where $1 \leq i, j \leq k$, we rewrite the differential equations in the usual form
\begin{equation}
\label{eq.nfoeftc}
   c_{k,i}' (t)
 + \sum_{j=1}^k a_{i,j}\, c_{k,j} (t)
 + \sum_{l = 1, \ldots, k \atop
         m = 1, \ldots, k} a_{i,l,m}\, c_{k,l} (t) c_{k,m} (t)
 = \sum_{j=1}^k b_{i,j}\, (f,e_j)
\end{equation}
for $i = 1, \ldots, k$, where
   $a_{i,j}$,
   $a_{i,l,m}$ and
   $b_{i,j}$
are constant complex numbers.
Furthermore, the condition $u_k (0) = u_{0,k}$ is equivalent to $k$ equations
   $c_{k,i} (0) = \pi_i (u_{0,k})$,
where $\pi_i$ is the projection of $u_{0,k}$ on the one-dimensional space spanned by
the element $e_i$.

The system of nonlinear equations (\ref{eq.nfoeftc}) with initial conditions
   $c_{k,i} (0) = \pi_i (u_{0,k})$
has a maximal solution defined on some interval $[0,T (k)]$.
If $T (k) < T$, then $\|u_k (t) \|_{H}$ must tend to $+ \infty$ as $t \to T (k)$.
However, the a priori estimates we go to prove show that this does not happen, and so
$T (k) = T$, showing that the coefficients $c_{k,i} (t)$ are defined on all of $[0,T]$.

\textit{A priori estimates}.
The first a priori estimates are established as in the linear case.
We multiply the first equation of (\ref{eq.appsNSc}) by $c_{k,j} (t)$ and sum up these
equations for $j = 1, \ldots, k$.
From the properties of the nonlinear term we deduce that $(N (u_k),u_k) = 0$ whence
$$
   (u_k' (t),u_k (t)) + (V u_k (t),u_k (t))
 = (f (t),u_k (t)).
$$
Since
$
   \partial_t (u_k (t),u_k (t)) = 2\, \Re\, (u_k' (t),u_k (t)),
$
we write
\begin{eqnarray*}
   \partial_t\, \| u_k (t) \|_H^2 + 2 \left( D (u_k (t)) - \| u_k (t) \|_H^2 \right)
 & = &
   2\, \Re\, (f (t),u_k (t))
\\
 & \leq &
   2\, \| f (t) \|_{S'} \| u_k (t) \|_S
\\
 & \leq &
   c\, \| f (t) \|_{S'}^2 + \frac{1}{c}\, D (u_k (t))
\end{eqnarray*}
with arbitrary constant $c > 0$.
We choose $c$ in such a way that $2 - 1/c > 0$.
On the other hand,
$$
   \partial_t\, \| u_k (t) \|_H^2 - 2 \| u_k (t) \|_H^2
 = e^{2t}\, \partial_t \left( e^{- 2t} \| u_k (t) \|_H^2 \right),
$$
as is easy to check.
Therefore,
\begin{equation}
\label{eq.tlhsooa}
   \partial_t \left( e^{- 2t} \| u_k (t) \|_H^2 \right)
 + \Big( 2 - \frac{1}{c} \Big) e^{- 2t}\, D (u_k (t))
 \leq
   c\, e^{- 2t}\, \| f (t) \|_{S'}^2.
\end{equation}
Integrating the inequality from $0$ to $t$, where $t$ is a fixed point of $I$, we obtain,
in particular,
\begin{eqnarray*}
   \| u_k (t) \|_H^2
 & \leq &
   e^{2t}\, \| u_{0,k} \|_H^2
 + c \int_0^t e^{2 (t-t')} \| f (t') \|_{S'}^2 dt'
\\
 & \leq &
   e^{2t}\, \| u_0 \|_H^2
 + c \int_0^t e^{2 (t-t')} \| f (t') \|_{S'}^2 dt'
\end{eqnarray*}
for almost all $t \in I$.
Therefore,
$$
   \sup_{t \in I} \| u_k (t) \|_H^2
 \leq
   e^{2T}
   \Big( \| u_0 \|_H^2 + c \int_0^T \| f (t) \|_{S'}^2 dt \Big).
$$
the right-hand side being finite and independent of $k$.
We have thus proved that the sequence $u_k$ remains in a bounded subset of $L^\infty (I,H)$,
i.e., there is a constant $C$ such that
\begin{equation}
\label{eq.uboasiH}
   \| u_k \|_{L^\infty (I,H)} \leq C
\end{equation}
for all $k$.

Furthermore, on integrating inequality (\ref{eq.tlhsooa}) in $t$ over all of $I$ we arrive at
the estimates
\begin{eqnarray*}
   \| u_k (T) \|_H^2
 + \Big( 2 - \frac{1}{c} \Big)
   \int_0^T \!\!\! e^{2 (T-t)} D (u_k (t)) dt
 \!\! & \!\! \leq \!\! & \!\!
   e^{2T} \| u_{0,k} \|_H^2
 + c \int_0^T \!\!\! e^{2 (T-t)} \| f (t) \|_{S'}^2 dt
\\
 \!\! & \!\! \leq \!\! & \!\!
   e^{2T}\, \| u_0 \|_H^2
 + c \int_0^T \!\!\! e^{2 (T-t)} \| f (t) \|_{S'}^2 dt
\end{eqnarray*}
for each $k = 1, 2, \ldots$.
This shows that the sequence $u_k$ remains in a bounded subset of $L^2 (I,S)$, i.e.,
\begin{equation}
\label{eq.uboasiS}
   \| u_k \|_{L^2 (I,S)} \leq C
\end{equation}
for all $k$, with $C$ a constant independent of $k$.

\textit{Estimates for fractional derivative}.
Let $v_k$ denote the function of $t \in \R$ with values in $S$, which is equal to $u_k$ on
$[0,T]$ and to zero in the complement of this interval.
The Fourier transform of $v_k$ is denoted by $\hat{v}_k$.
In addition to the previous inequalities, which are similar to the estimates in the linear case,
we want to show that
\begin{equation}
\label{eq.eofdovk}
   \int_{-\infty}^{+\infty} |\tau|^{2 \lambda} \| \hat{v}_k (\tau) \|_{H}^2 d \tau \leq C
\end{equation}
for some $\lambda > 0$, both constants $\lambda$ and $C$ being independent of $k$.
Along with (\ref{eq.uboasiS}) this will imply that $v_k$ belongs to a bounded set of
   $\mathcal{H}^\lambda$
corresponding to
   $\mathcal{B}_0 = S$ and
   $\mathcal{B}_1 = H$,
and this will enable us to apply the compactness result of Theorem \ref{t.Temam79}.

In order to prove (\ref{eq.eofdovk}) we observe that equations (\ref{eq.appsNSc}) can be written
in the form
$$
   \partial_t\, (v_k, e_j)
 = (w_k, e_j) + (u_{0,k}, e_j)\, \delta_0 - (u_k (T), e_j)\, \delta_T
$$
for all $j = 1, \ldots, k$, where
   $w_k$ is equal to $f - V u_k - N (u_k)$ on $[0,T]$ and
   zero outside this interval.
(Compare this with the proof of Theorem \ref{t.tptteil}.)
On applying the Fourier transform we get
\begin{equation}
\label{eq.oatFtwg}
   (\imath \tau)\, (\hat{v}_k (\tau), e_j)
 = (\hat{w}_k (\tau), e_j) + (u_{0,k}, e_j) - (u_k (T), e_j)\, \exp (- \imath T \tau)
\end{equation}
for all $\tau \in \R$.

We multiply (\ref{eq.oatFtwg}) by (the complex conjugate of) the Fourier transform of
the function equal to $c_{k,j} (t)$ for $t \in [0,T]$ and zero for $t \in \R \setminus [0,T]$,
and sum up the resulting equalities for $j = 1, \ldots, k$.
This yields
\begin{equation}
\label{eq.eftFtovk}
   (\imath \tau)\, \| \hat{v}_k (\tau) \|_H^2
 = (\hat{w}_k (\tau), \hat{v}_k (\tau))
 + (u_{0,k}, \hat{v}_k (\tau))
 - (u_k (T), \hat{v}_k (\tau))\, \exp (- \imath T \tau).
\end{equation}
By inequality $\| N (v) \|_{S'} \leq c\, D (v)$ for all $v \in S$ (see the proof
of Lemma \ref{l.lapwtsB}), we obtain
$$
   \int_0^T \| w_k (t) \|_{S'} dt
 \leq
   \int_0^T \left( \| f (t) \|_{S'} + \sqrt{D (u_k (t))} + c D (u_k (t)) \right) dt,
$$
and the right-hand side remains bounded uniformly in $k$ according to (\ref{eq.uboasiS}).
It follows that
$$
   \sup_{\tau \in \R} \| \hat{w}_k (\tau) \|_{S'}
 \leq C
$$
for all $k$, the constant $C$ being different in diverse applications.

By (\ref{eq.uboasiH})
$$
\begin{array}{rcl}
   \| u_k (0) \|_H & \leq & c,
\\
   \| u_k (T) \|_H & \leq & c,
\end{array}
$$
and so we deduce from (\ref{eq.eftFtovk}) that
$$
   |\tau|\, \| \hat{v}_k (\tau) \|_H^2
 \leq
   C \sqrt{D (\hat{v}_k (\tau))} + 2c\, \| \hat{v}_k (\tau) \|_H
$$
or
$$
   |\tau|\, \| \hat{v}_k (\tau) \|_H^2
 \leq
   (C + 2 c)\, \sqrt{D (\hat{v}_k (\tau))}
$$
for all $\tau \in \R$.

Choose any positive $\lambda$ satisfying
   $\displaystyle \lambda < \frac{1}{4}$.
From the inequality
$$
   |\tau|^{2 \lambda}
 \leq
   c (\lambda)\, \frac{1 + |\tau|}{1 + |\tau|^{1 - 2 \lambda}}
$$
for all $\tau \in \R$,
   the constant $c (\lambda)$ depending on $\lambda$,
it follows that
\begin{eqnarray*}
\lefteqn{
   \int_{-\infty}^{+\infty}
   |\tau|^{2 \lambda}\, \| \hat{v}_k (\tau) \|_{H}^2
   d \tau
}
\\
 & \leq &
   c (\lambda)
   \int_{-\infty}^{+\infty}
   \frac{1 + |\tau|}{1 + |\tau|^{1-2 \lambda}}\,
   \| \hat{v}_k (\tau) \|_{H}^2
   d \tau
\\
 & \leq &
   c (\lambda)
   \Big(
   \int_{-\infty}^{+\infty}
   \| \hat{v}_k (\tau) \|_{H}^2
   d \tau
 + (C + 2 c)
   \int_{-\infty}^{+\infty}
   \frac{\sqrt{D (\hat{v}_k (\tau))}}{1 + |\tau|^{1-2 \lambda}}\,
   d \tau
   \Big).
\end{eqnarray*}
By the Plancherel theorem, we see that
$$
   \int_{-\infty}^{+\infty}
   \| \hat{v}_k (\tau) \|_{H}^2
   d \tau
 = \int_0^T
   \| v_k (t) \|_{H}^2
   dt
$$
and these integrals are bounded uniformly in $k$.
Hence, we shall have established (\ref{eq.eofdovk}) if we show that the integral
$$
   \int_{-\infty}^{+\infty}
   \frac{\sqrt{D (\hat{v}_k (\tau))}}{1 + |\tau|^{1-2 \lambda}}\,
   d \tau
$$
is bounded uniformly in $k$.
By the Schwarz inequality and the Plancherel theorem we can bound these integrals by
$$
   \Big( \int_{-\infty}^{+\infty} \frac{d \tau}{(1 + |\tau|^{1-2 \lambda})^2}\, \Big)^{1/2}
   \Big( \int_{-\infty}^{+\infty} D (v_k (t))\, dt \Big)^{1/2},
$$
the first factor being finite, for $\lambda < 1/4$, and
the second factor is bounded uniformly in $k$ by (\ref{eq.uboasiS}).
The proof of (\ref{eq.eofdovk}) is complete.

\textit{Passage to the limit}.
Estimates (\ref{eq.uboasiH}) and (\ref{eq.uboasiS}) enable us to assert that there is a
subsequence of $u_k$ which converges to an element
   $u \in L^\infty (I,H) \cap L^2 (I,S)$
both in the weak$^\ast$ topology of $L^\infty (I,H)$ and in the weak topology of $L^2 (I,S)$.
It will cause no confusion if we continue to write $u_k$ for this subsequence.
By (\ref{eq.eofdovk}) and the compactness result of Theorem \ref{t.Temam79} there is no
loss of generality in assuming that $u_k$ converges to $u$ strongly in $L^2 (I,H)$.
The former of the convergence results allows us to pass to the limit.
We argue essentially in the same way as in the linear case.

Let $\phi$ be a continuously differentiable scalar-valued function of $t \in [0,T]$
satisfying $\phi (T) = 0$.
We multiply (\ref{eq.appsNSc}) by $\phi (t)$ and then integrate by parts.
This leads to the equations
\begin{eqnarray*}
\lefteqn{
 - \int_0^T \!\! (u_k (t), e_j) \phi' (t) dt
 + \int_0^T \!\! (V u_k (t), e_j) \phi (t) dt
 + \int_0^T \!\! (N (u_k (t)), e_j) \phi (t) dt
}
\\
 & = &
   (u_{0,k}, e_j)\, \phi (0)
 + \int_0^T \!\! (f (t), e_j) \phi (t) dt
   \hspace{4cm}
\end{eqnarray*}
for $j = 1, \ldots, k$.
Passing to the limit with the subsequence of $u_k$ is easy for the linear terms.
For the nonlinear term we apply Lemma \ref{l.ptlfnlt} to be proved below.
In the limit we find that the equality
\begin{eqnarray}
\label{eq.ptlfthe}
\lefteqn{
 - \int_0^T \!\! (u (t), v) \phi' (t) dt
 + \int_0^T \!\! (V u (t), v) \phi (t) dt
 + \int_0^T \!\! (N (u (t)), v) \phi (t) dt
}
\nonumber
\\
 & = &
   (u_{0}, v)\, \phi (0)
 + \int_0^T \!\! (f (t), v) \phi (t) dt
   \hspace{4cm}
\nonumber
\\
\end{eqnarray}
holds for $v = e_1, e_2, \ldots$.
Hence it follow that the equality is valid for any element $v \in S$ which is a finite
linear combination of the $e_k$.
And by the continuity argument (\ref{eq.ptlfthe}) is still true for any $v \in S$.
In particular, on writing (\ref{eq.ptlfthe}) with arbitrary functions
   $\phi \in C^\infty_{\mathrm{comp}} (I)$
we conclude readily that $u$ satisfies (\ref{eq.NSweakf}) in the sense of distributions.

Finally, it remains to prove that $u$ satisfies $u (0) = u_0$.
For this purpose we multiply (\ref{eq.NSweakf}) by $\phi$ and integrate in $t \in I$.
After integrating the first term by parts we get
\begin{eqnarray*}
\lefteqn{
 - \int_0^T \!\! (u (t),v)\, \phi' (t) dt
 + \int_0^T \!\! (V u (t),v)\, \phi (t) dt
 + \int_0^T \!\! (N (u (t)),v)\, \phi (t) dt
}
\\
 & = &
   (u_{0}, v)\, \phi (0)
 + \int_0^T \!\! (f (t),v)\, \phi (t) dt
   \hspace{4cm}
\end{eqnarray*}
for all $v \in S$.
On comparing this with (\ref{eq.ptlfthe}) we see that
$
   (u (0) - u_0, v)\, \phi (0) = 0
$
for all $v \in S$ and for each function $\phi$ of the type considered.
We can choose $\phi$ with $\phi (0) = 1$.
Thus,
$
   (u (0) - u_0, v) = 0
$
for all $v \in S$.
This equality implies $u (0) = u_0$, as desired.

The proof of Theorem \ref{t.exnonNS} will be complete once we show the lemma mentioned
above.

\begin{lemma}
\label{l.ptlfnlt}
Assume that
   $u_k$ converges to $u$ weakly in $L^2 (I,S)$ and strongly in $L^2 (I,H)$.
Then
$$
   \int_0^T (N (u_k (t)), w (t))\, dt
 \to
   \int_0^T (N (u (t)), w (t))\, dt
$$
for any section $w \in C^1 (\overline{\cC},F^i)$.
\end{lemma}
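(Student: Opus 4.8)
The plan is to follow the classical Lions--Temam scheme: split $N(u_k)-N(u)$ by bilinearity, and then use the anticommutativity relation of Corollary~\ref{c.algestr} so that the spatial derivative concealed in the nonlinear term never falls on the factor $u_k-u$, which we control only through its strong convergence in $L^2(I,H)$ and its boundedness in $L^2(I,S)$. Concretely, writing $N(a,b)=T(b)a$, which is bilinear, one has $N(u_k)-N(u)=N(u_k-u,u_k)+N(u,u_k-u)$, so it suffices to prove that $J_k':=\int_0^T(N(u_k-u,u_k)(t),w(t))\,dt$ and $J_k'':=\int_0^T(N(u,u_k-u)(t),w(t))\,dt$ both tend to $0$; measurability and integrability of the integrands follow from Lemma~\ref{l.(N(uv)w)} together with $u,u_k\in L^2(I,S)$.

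For $J_k'$ the factor $u_k-u$ occupies the undifferentiated slot of $N$, so no integration by parts is needed. Since $T$ is a first order operator and $m\ge1$, one has $\|T(u_k)(t)\|_{L^2(\cX,\mathrm{Hom}\,F^i)}\le c\,\|u_k(t)\|_{H^1(\cX,F^i)}\le c\,\|u_k(t)\|_S$, and a pointwise estimate on $\cX$ followed by H\"older's inequality gives $|(N(u_k-u,u_k)(t),w(t))|\le c\,\|w(t)\|_{C(\cX)}\,\|T(u_k)(t)\|_{L^2(\cX)}\,\|(u_k-u)(t)\|_H$. Integrating in $t$ and applying the Cauchy--Schwarz inequality, $|J_k'|\le c\,\|w\|_{C(\overline{I},C(\cX))}\,\|u_k\|_{L^2(I,S)}\,\|u_k-u\|_{L^2(I,H)}$, which tends to $0$ because $\|u_k\|_{L^2(I,S)}$ is bounded (weak convergence) while $\|u_k-u\|_{L^2(I,H)}\to0$.

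For $J_k''$ a derivative genuinely falls on $u_k-u$, and I expect this to be the main obstacle, since strong $L^2$ convergence of $u_k-u$ by itself is useless against a derivative. Here I would invoke Corollary~\ref{c.algestr}: for a.e.\ $t$ the limit $u(t)$ lies in $S$, hence is solenoidal with vanishing Cauchy data with respect to $A^\ast$, so, after extending that relation by continuity (density of $\mathcal{S}$ in $S$, and of smooth sections in $C^1(\overline{\cX},F^i)$, both sides being continuous in $w$ in the $C^1$-norm), $(N(u,u_k-u)(t),w(t))=-(N(u(t),w(t)),(u_k-u)(t))=-(T(w(t))u(t),(u_k-u)(t))$; equivalently this is an integration by parts in $x$ whose boundary term vanishes because $u_k-u\in S$. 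The derivative now rests on $w\in C^1(\overline{\cC},F^i)$, so $\|T(w(t))\|_{C(\cX,\mathrm{Hom}\,F^i)}\le c\,\|w(t)\|_{C^1(\cX)}$, and H\"older's inequality yields $|(N(u,u_k-u)(t),w(t))|\le c\,\|w(t)\|_{C^1(\cX)}\,\|u(t)\|_H\,\|(u_k-u)(t)\|_H$. Hence $|J_k''|\le c\,\|w\|_{C(\overline{I},C^1(\cX))}\,\|u\|_{L^2(I,H)}\,\|u_k-u\|_{L^2(I,H)}\to0$, and $\int_0^T(N(u_k),w)\,dt-\int_0^T(N(u),w)\,dt=J_k'+J_k''\to0$, as required.

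The only delicate point is precisely this transfer of the spatial derivative off the weakly converging factor and onto the smooth test section $w$; once that is done, everything reduces to H\"older's inequality plus the strong convergence $u_k\to u$ in $L^2(I,H)$ supplied by the compactness theorem of Section~\ref{s.compact}. I would therefore take some care to check that the anticommutativity identity of Corollary~\ref{c.algestr} really survives the passage to $u(t)\in S$ and to a merely $C^1$ section $w$ in place of a smooth one, which is a routine density argument but deserves to be spelled out.
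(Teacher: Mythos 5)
Your proof is correct, and it rests on the same key tool as the paper's, namely the anticommutativity relation of Corollary \ref{c.algestr}; but the bookkeeping is genuinely different. The paper does not split $N(u_k)-N(u)$ at all: it applies the corollary once to the whole integrand, writing $(N(u_k),w)=-(N(u_k,w),u_k)=-(T(w)u_k,u_k)$, so that the integral becomes a quadratic form in $u_k$ with the bounded coefficient $T(w)\in C(\overline{\cC},\mathrm{Hom}\,F^i)$; this form is continuous on $L^2(I,H)$, hence passes to the limit under the strong convergence $u_k\to u$ in $L^2(I,H)$ alone, and a second application of the corollary converts the limit back into $\int_0^T(N(u),w)\,dt$. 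Your bilinear splitting $N(u_k)-N(u)=N(u_k-u,u_k)+N(u,u_k-u)$ handles the first term without any derivative transfer, at the price of invoking the uniform bound on $\|u_k\|_{L^2(I,S)}$ (which weak convergence does supply), and uses the corollary only on the second term; the estimates you give for $J_k'$ and $J_k''$ are sound. Both arguments need the same density extension of Corollary \ref{c.algestr} from $\mathcal{S}\times C^\infty\times C^\infty$ to $H\times S\times C^1$, which you rightly flag and which the paper leaves tacit; in the paper's actual application the $u_k$ are smooth Galerkin sums, so only the limit $u$ requires it there. The net comparison: the paper's version is shorter and needs slightly less (no $L^2(I,S)$ bound enters), while yours isolates more explicitly the single term for which the derivative must be thrown onto $w$.
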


\begin{proof}
We write
\begin{eqnarray*}
   \int_0^T (N (u_k (t)), w (t))\, dt
 & = &
 - \int_0^T (N (u_k (t), w (t)), u_k (t))\, dt
\\
 & = &
 - \int_0^T (T (w (t)) u_k (t), u_k (t))\, dt,
\end{eqnarray*}
which is due to Corollary \ref{c.algestr}.
The right-hand side converges to
\begin{eqnarray*}
 - \int_0^T (T (w (t)) u (t), u (t))\, dt
 & = &
 - \int_0^T (N (u (t), w (t)), u (t))\, dt
\\
 & = &
   \int_0^T (N (u (t)), w (t))\, dt,
\end{eqnarray*}
and the lemma is proved.
\end{proof}
\end{proof}

The solution $u$ given by Theorem \ref{t.exnonNS} satisfies an energy estimate.
In order to show it we integrate the equality
$$
   \partial_t\, \| u_k (t) \|_H^2
 + 2 \left( D (u_k (t)) \! - \! \| u_k (t) \|_H^2 \right)
 = 2\, \Re\, (f (t),u_k (t))
$$
to conclude that
$$
   \| u_k (t) \|_H^2 + 2 \int_0^t \! \left( D (u_k (t')) \! - \! \| u_k (t') \|_H^2 \right) dt'
 = \| u_{0,k} \|_H^2 + 2\, \Re \int_0^t \! (f (t'),u_k (t')) dt'
$$
for all $t \in [0,T]$.
Multiplying this equality by a nonnegative function $\phi \in C^\infty_{\mathrm{comp}} (I)$
and integrating in $t \in I$ yields
\begin{eqnarray*}
\lefteqn{
   \int_0^T
   \Big(
   \| u_k (t) \|_H^2 + 2 \int_0^t \! \left( D (u_k (t')) \! - \! \| u_k (t') \|_H^2 \right) dt'
   \Big)
   \phi (t)
   dt
}
\\
 & = &
   \int_0^T
   \Big( \| u_{0,k} \|_H^2 + 2\, \Re \int_0^t \! (f (t'),u_k (t')) dt' \Big)
   \phi (t)
   dt.
\end{eqnarray*}
Since $u_k$ converges to $u \in L^\infty (I,H) \cap L^2 (I,S)$
   in the weak$^\ast$ topology of $L^\infty (I,H)$,
   weakly in $L^2 (I,S)$ and
   strongly in $L^2 (I,H)$,
we can pass to the lower limit in this relation and obtain
\begin{eqnarray*}
\lefteqn{
   \int_0^T
   \Big(
   \| u (t) \|_H^2 + 2 \int_0^t \! \left( D (u (t')) \! - \! \| u (t') \|_H^2 \right) dt'
   \Big)
   \phi (t)
   dt
}
\\
 & \leq &
   \int_0^T
   \Big( \| u_0 \|_H^2 + 2\, \Re \int_0^t \! (f (t'),u (t')) dt' \Big)
   \phi (t)
   dt
\end{eqnarray*}
for all $\phi \in C^\infty_{\mathrm{comp}} (I)$ satisfying $\phi \geq 0$.
(We have used the fact that if $v_k \to v$ weakly in a normed space $V$ then
   $\| v \|_V \leq \liminf \| v_k \|_V$.)
The last inequality amounts to saying that
\begin{equation}
\label{eq.enerest}
   \| u (t) \|_H^2 + 2 \int_0^t \! \left( V u (t'), u (t') \right) dt'
 \leq
   \| u_0 \|_H^2 + 2\, \Re \int_0^t \! (f (t'),u (t')) dt'
\end{equation}
for almost all $t \in I$.

\section{Regularity and uniqueness}
\label{s.reguniq}

As but one generalisation of Lemma \ref{l.(N(uv)w)} we note that on applying the
H\"{o}lder inequality and the Sobolev embedding theorem one establishes immediately
that the form $(N (u,v),w)$ is actually trilinear continuous on the Cartesian product
   $H^{s_1} (\cX,F^i) \times H^{s_2+1} (\cX,F^i) \times H^{s_3} (\cX,F^i)$,
where
   $s_1 + s_2 + s_3 \geq n/2$, if each of $s_1$, $s_2$ and $s_3$ is different from
   $n/2$,
and
   $s_1 + s_2 + s_3 > n/2$, if some of the $s_1$, $s_2$ and $s_3$ just amounts to
   $n/2$.
That is, there is a constant $c$ depending on $s_1$, $s_2$ and $s_3$, such that
\begin{equation}
\label{eq.tricont}
   |(N (u,v),w)|
 \leq
   c\, \| u \|_{H^{s_1} (\cX,F^i)} \| v \|_{H^{s_2+1} (\cX,F^i)} \| w \|_{H^{s_3} (\cX,F^i)}
\end{equation}
provided $s_1, s_2, s_3 \geq 0$.

Moreover, if $\cX$ is compact, which is the case indeed, then the interpolation inequality
\begin{equation}
\label{eq.interpo}
   \| u \|_{H^{(1-\vartheta) s_1 + \vartheta s_2} (\cX,F^i)}
 \leq
   c\, \| u \|_{H^{s_1} (\cX,F^i)}^{1-\vartheta} \| u \|_{H^{s_2} (\cX,F^i)}^{\vartheta}
\end{equation}
holds for all $u \in H^{s_2} (\cX,F^i)$, where
   $s_1 \leq s_2$,
   $\vartheta \in [0,1]$,
and $c$ is a constant independent of $u$, see
   \cite[Ch.~1]{LionMage72}.

Clearly, various estimates for the trilinear form $(N (u,v),w)$ can be obtained by using
(\ref{eq.tricont}) and (\ref{eq.interpo}).
We mention one of these.

\begin{lemma}
\label{l.totnwst}
Assume that $m \geq (n+2)/4$.
Then there is a constant $c$ with the property that
$$
   |(N (u,v),w)|
 \leq
   c\, \| u \|_H^{1/2} D (u)^{1/4} \| v \|_H^{1/2} D (v)^{1/4} D (w)^{1/2}
$$
for all $u, v, w \in H^m_0 (\cX,F^i)$.
\end{lemma}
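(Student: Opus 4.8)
The plan is to read the estimate off the sharp H\"{o}lder--Sobolev bound (\ref{eq.tricont}), then lower the Sobolev exponents by the interpolation inequality (\ref{eq.interpo}), and finally rewrite the $H^m$-norms in terms of the Dirichlet form via Lemma~\ref{l.Dirichl}. The one substantive point is the choice of Sobolev indices; the rest is bookkeeping.

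First I would invoke (\ref{eq.tricont}) with the indices
$$
   s_1 = \frac{m}{2}, \qquad s_2 = \frac{m}{2} - 1, \qquad s_3 = m ,
$$
so that the differentiated slot $H^{s_2+1} (\cX,F^i)$ equals $H^{m/2} (\cX,F^i)$ and the index sum is $s_1 + s_2 + s_3 = 2m - 1$. The hypothesis $m \geq (n+2)/4$ is precisely $2m - 1 \geq n/2$, so the constraint $s_1 + s_2 + s_3 \geq n/2$ required in (\ref{eq.tricont}) is met, and one obtains
$$
   |(N (u,v),w)|
 \leq
   c\, \| u \|_{H^{m/2} (\cX,F^i)}\,
       \| v \|_{H^{m/2} (\cX,F^i)}\,
       \| w \|_{H^{m} (\cX,F^i)}
$$
for all $u, v, w \in H^m_0 (\cX,F^i)$.

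Next I would interpolate the first two factors. Applying (\ref{eq.interpo}) with interpolation orders $0$ and $m$ and parameter $\vartheta = 1/2$ yields $\| u \|_{H^{m/2} (\cX,F^i)} \leq c\, \| u \|_{L^2 (\cX,F^i)}^{1/2}\, \| u \|_{H^m (\cX,F^i)}^{1/2}$, and the same bound for $v$. Since $\| \cdot \|_{L^2 (\cX,F^i)}$ is the $H$-norm occurring in the statement, and since the proof of Lemma~\ref{l.Dirichl} (G\aa rding's inequality for the nonnegative elliptic operator $V = \varepsilon \iD^m + \nu \iD$, together with the trivial bound $\| \cdot \|_{L^2} \leq \sqrt{D (\cdot)}$) shows that $\sqrt{D (\cdot)}$ is equivalent to $\| \cdot \|_{H^m (\cX,F^i)}$ on $H^m_0 (\cX,F^i)$, I may replace $\| u \|_{H^m}$, $\| v \|_{H^m}$ and $\| w \|_{H^m}$ by $\sqrt{D (u)}$, $\sqrt{D (v)}$ and $\sqrt{D (w)}$. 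Collecting the constants of (\ref{eq.tricont}), (\ref{eq.interpo}) and Lemma~\ref{l.Dirichl} gives
$$
   |(N (u,v),w)|
 \leq
   c\, \| u \|_H^{1/2}\, D (u)^{1/4}\, \| v \|_H^{1/2}\, D (v)^{1/4}\, D (w)^{1/2},
$$
which is the assertion.

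The main obstacle is the admissibility of these indices at the borderline. One must check that the strictness proviso in (\ref{eq.tricont}) --- strict inequality $s_1 + s_2 + s_3 > n/2$ if some $s_i$ equals $n/2$ --- is not triggered: a one-line computation shows that when $2m - 1 = n/2$ none of $m/2$, $m/2 - 1$, $m$ can equal $n/2$ (the first two never, the last only if $m = 1$), so for $m \geq 2$ the non-strict form of (\ref{eq.tricont}) suffices. One also needs $s_2 = m/2 - 1 \geq 0$, i.e. $m \geq 2$. The remaining case $m = 1$ occurs only for $n \leq 2$, where $s_2 = -1/2$ lies just outside the range quoted for (\ref{eq.tricont}); it is settled either by the routine extension of (\ref{eq.tricont}) to $s_2 = -1/2$ with $s_1, s_3 \geq 1/2$ (the same H\"{o}lder and Sobolev embeddings apply), or by antisymmetrising $(N (u,v),w)$ through Lemma~\ref{l.algestr} and then estimating with the two-dimensional Ladyzhenskaya inequality $\| \cdot \|_{L^4} \leq c\, \| \cdot \|_{L^2}^{1/2} \| \cdot \|_{H^1}^{1/2}$.
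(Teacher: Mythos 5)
Your proof is correct and follows essentially the same route as the paper: the paper's own argument consists precisely of choosing $s_1 = m/2$, $s_2 + 1 = m/2$, $s_3 = m$ in (\ref{eq.tricont}) and interpolating with $\vartheta = 1/2$ via (\ref{eq.interpo}), with the Dirichlet norm equivalence of Lemma~\ref{l.Dirichl} doing the final bookkeeping. Your additional check of the borderline provisos and of the low-dimensional case $m = 1$ (where $s_2 = -1/2$ falls outside the stated range of (\ref{eq.tricont})) is a point the paper passes over in silence, so you are if anything more careful than the original.
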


\begin{proof}
Choose
$$
\begin{array}{rcl}
       s_1 & = & \displaystyle \frac{1}{2} 0 + \frac{1}{2} m,
\\
   s_2 + 1 & = & \displaystyle \frac{1}{2} 0 + \frac{1}{2} m,
\\
       s_3 & = & m
\end{array}
$$
and apply both (\ref{eq.tricont}) and (\ref{eq.interpo}) with
   $\displaystyle \vartheta = \frac{1}{2}$.
\end{proof}

In particular,
$$
   |(N (u),w)|
 \leq
   c\, \| u \|_H D (u)^{1/2} D (w)^{1/2}
$$
for all $u, w \in S$, and hence
$
   \| N (u) \|_{S'}
 \leq
   c\, \| u \|_H D (u)^{1/2}
$
for all $u \in S$.
If now
   $u \in L^\infty (I,H) \cap L^2 (I,S)$,
then $N (u (t))$ belongs to $S'$ for almost all $t \in I$ and the estimate
$$
   \| N (u (t)) \|_{S'}
 \leq
   c\, \| u (t) \|_H D (u (t))^{1/2}
$$
shows that
   $N (u)$ belongs to $L^2 (I,S')$
and implies
\begin{equation}
\label{eq.efN(u)iS}
   \| N (u) \|_{L^2 (I,S')}
 \leq
   c\, \| u \|_{L^\infty (I,H)} \| u \|_{L^2 (I,S)}
\end{equation}
with $c$ a constant independent of $u$.

We are now in a position to prove the main result of this section, cf.
   Theorem 3.2 of \cite[p.~198]{Tema79}.

\begin{theorem}
\label{t.uniqrNS}
Assume that $m \geq (n+2)/4$.
Then the solution $u$ of problem (\ref{eq.NSweakf}) (or (\ref{eq.NSpowif})) given by
Theorem \ref{t.exnonNS} is unique.
Moreover, $u$ is equal almost everywhere to a function continuous of $t \in [0,T]$
with values in $H$, and
   $u (t) \to u_0$ in $H$, as $t \to 0$.
\end{theorem}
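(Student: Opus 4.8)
The plan is to exploit the fact that, once $\varepsilon > 0$, the very class in which Theorem~\ref{t.exnonNS} produces a solution forces the nonlinearity into $L^2 (I,S')$, which in turn upgrades the regularity of $u'$. Concretely, I would first observe that for any solution $u \in L^\infty (I,H) \cap L^2 (I,S)$ of (\ref{eq.NSpowif}), estimate (\ref{eq.efN(u)iS}) gives $N (u) \in L^2 (I,S')$; since also $Vu \in L^2 (I,S')$ and $f \in L^2 (I,S')$, the first equation of (\ref{eq.NSpowif}) yields $u' = f - Vu - N (u) \in L^2 (I,S')$. Applying Lemma~\ref{l.interLM} to the triple $S \hookrightarrow H \hookrightarrow S'$ then shows that $u$ is a.e. equal to a function continuous on $\overline{I}$ with values in $H$, and that $\partial_t\, \| u \|_H^2 = 2\, \Re\, (u',u)_H$ in the sense of distributions on $I$. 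This already delivers the continuity assertion, and since $u (0) = u_0$ was established in Theorem~\ref{t.exnonNS}, continuity at $t = 0$ gives $u (t) \to u_0$ in $H$ as $t \to 0$.

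For uniqueness I would take two solutions $u_1, u_2$ in the class $L^\infty (I,H) \cap L^2 (I,S)$ furnished by Theorem~\ref{t.exnonNS}, set $w = u_1 - u_2$, and note that by the previous paragraph $w \in L^2 (I,S)$, $w' \in L^2 (I,S')$, $w$ is continuous into $H$, and $w (0) = 0$. Using the bilinearity of $N$ one writes $N (u_1) - N (u_2) = N (u_1,w) + N (w,u_2)$, so that $w' + Vw + N (u_1,w) + N (w,u_2) = 0$ a.e. on $I$. Pairing this identity with $w (t) \in S$, taking real parts, invoking Lemma~\ref{l.interLM} and the relation $(Vw,w) = D (w) - \| w \|_H^2$ coming from (\ref{eq.Frieext}), and observing that $(N (u_1 (t),w (t)),w (t)) = 0$ because $u_1 (t) \in S$ is solenoidal with vanishing Cauchy data with respect to $A^\ast$ — so that Corollary~\ref{c.algestr}, extended from $\mathcal{S}$ to $S$ by density and continuity of the trilinear form, applies — I arrive at
\begin{equation*}
   \frac{1}{2}\, \partial_t\, \| w (t) \|_H^2 + D (w (t)) - \| w (t) \|_H^2
 + \Re\, (N (w (t),u_2 (t)),w (t)) = 0
\end{equation*}
for almost all $t \in I$.

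The decisive step is the estimate of the surviving trilinear term. Lemma~\ref{l.totnwst}, with its three arguments taken to be $w (t)$, $u_2 (t)$, $w (t)$, gives
\begin{equation*}
   |(N (w (t),u_2 (t)),w (t))|
 \leq
   c\, \| w (t) \|_H^{1/2} D (w (t))^{3/4} \| u_2 (t) \|_H^{1/2} D (u_2 (t))^{1/4} ,
\end{equation*}
and since the power of $D (w (t))$ here is strictly less than one, Young's inequality lets me dominate the right-hand side by $\frac{1}{2}\, D (w (t)) + c'\, \| w (t) \|_H^2 \| u_2 (t) \|_H^2 D (u_2 (t))$. Substituting back and discarding the nonnegative term $\frac{1}{2}\, D (w (t))$, I obtain a differential inequality $\partial_t\, \| w (t) \|_H^2 \leq g (t)\, \| w (t) \|_H^2$ with $g (t) = c\, (1 + \| u_2 (t) \|_H^2 D (u_2 (t)))$; since $u_2 \in L^\infty (I,H)$ and $D (u_2 (\cdot)) \in L^1 (I)$ (because $u_2 \in L^2 (I,S)$ and $\sqrt{D (\cdot)}$ is the equivalent norm of $S$ by Lemma~\ref{l.Dirichl}), the function $g$ is integrable on $I$. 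Gronwall's lemma, applied to the continuous function $t \mapsto \| w (t) \|_H^2$ with $\| w (0) \|_H^2 = 0$, then forces $w \equiv 0$, i.e. $u_1 = u_2$. The main obstacle is precisely this trilinear estimate: it is only under the hypothesis $m \geq (n+2)/4$ that the factor attached to $w$ splits as a sub-unit power of $D (w)$ times powers of the $H$-norm of $w$, so that the energy term absorbs the former while the latter combines with $u_2$ into an $L^1 (I)$ weight; for $\varepsilon = 0$ and $n > 2$ this splitting fails, which is the classical origin of the gap between the existence and uniqueness classes.
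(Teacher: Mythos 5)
Your proposal is correct and follows essentially the same route as the paper: upgrade $u'$ to $L^2 (I,S')$ via (\ref{eq.efN(u)iS}), invoke Lemma \ref{l.interLM} for the continuity into $H$ and the identity $\partial_t \| \cdot \|_H^2 = 2 \Re\, (u',u)_H$, and close a Gronwall argument for the difference of two solutions after absorbing the surviving trilinear term into the Dirichlet form. The only (harmless) deviation is in that last estimate: the paper first converts $-(N (w,u_2),w)$ into $(N (w),u_2)$ by the anticommutativity and applies Lemma \ref{l.totnwst} with arguments $(w,w,u_2)$, giving the integrable weight $D (u_2)$ and Young's inequality with exponent $2$, whereas you bound $(N (w,u_2),w)$ directly with arguments $(w,u_2,w)$, use Young with exponents $4/3$ and $4$, and end up with the weight $\| u_2 \|_H^2 D (u_2)$, which is equally integrable since $u_2 \in L^\infty (I,H) \cap L^2 (I,S)$.
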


\begin{proof}
\textit{Regularity}.
We first prove the result of regularity.
According to the first equality of (\ref{eq.NSpowif}) we get
$$
   u' = f - Vu - N (u)
$$
almost everywhere on $I$.
By (\ref{eq.efN(u)iS}), each term on the right-hand side of this equality belongs to
$L^2 (I,S')$.
This remark improves the condition $u' \in L^1 (I,S')$, showing that
   $u' \in L^2 (I,S')$.
This improvement enables us to apply Lemma \ref{l.interLM}, which states exactly that
$u$ is almost everywhere equal to a continuous function of $t \in [0,T]$ with values
in $H$.
Thus, $u \in C (\overline{I},H)$, and the last part of Theorem \ref{t.uniqrNS} follows
immediately.

We also recall that Lemma \ref{l.interLM} asserts that for any function $u$ in $L^2 (I,S)$,
such that $u' \in L^2 (I,S')$, the equality
\begin{equation}
\label{eq.dotntp2}
   \partial_t \| u \|_H^2 = 2 \Re\, (u',u)_H
\end{equation}
holds in the sense of distributions on $I$.
This result will next be used in the proof of uniqueness which we now start.

\textit{Uniqueness}.
Suppose $u_1$ and $u_2$ are two solutions of (\ref{eq.NSpowif}).
Consider the difference $u = u_1 - u_2$.
By the above, both $u_1'$ and $u_2'$, and thus $u'$, belong to $L^2 (I,S')$.
The difference satisfies
\begin{equation}
\label{eq.fdu1-u2}
\begin{array}{rclcl}
   u' + Vu
 & =
 & N (u_2) - N (u_1)
 & \mbox{a.e. on}
 & I,
\\
   u (0)
 & =
 & 0.
 &
 &
 \end{array}
\end{equation}
For almost all $t \in I$ we take the scalar product of the first equality in
(\ref{eq.fdu1-u2}) with $u (t)$ in the duality between $S$ and $S'$.
Using (\ref{eq.dotntp2}) yields
\begin{equation}
\label{eq.rospwut}
   \partial_t \| u (t) \|_H^2 + 2 \left( D (u (t)) - \| u (t) \|_H^2 \right)
 = 2 \Re \left( (N (u_2 (t)), u (t)) - (N (u_1 (t)), u (t)) \right).
\end{equation}
From Corollary \ref{c.algestr} it follows that the right-hand side of this equality
just amounts to
$$
   - 2 \Re\, (N (u (t),u_2 (t)), u (t))
   = 2 \Re\, (N (u (t)), u_2 (t)).
$$
On applying Lemma \ref{l.totnwst} we can majorise this expression by
$$
   2 c\, \| u (t) \|_H D (u (t))^{1/2} D (u_2 (t))^{1/2}
 \leq
   \epsilon c^2\, \| u (t) \|_H^2 D (u_2 (t)) + \frac{1}{\epsilon}\, D (u (t)),
$$
where $\epsilon > 0$ is an arbitrary constant.
We choose $\epsilon$ in such a way that $2 - 1/\varepsilon \geq 0$.
Substituting this into (\ref{eq.rospwut}) we conclude that
$$
   \partial_t \| u (t) \|_H^2
 \leq
   \left( 2 + \epsilon c^2\, D (u_2 (t)) \right) \| u (t) \|_H^2
$$
for almost all $t \in I$.
Since the function $t \mapsto D (u_2 (t))$ is integrable, this shows readily that
$$
   \partial_t
   \Big( \exp \Big( - \int_0^t \left( 2 + \epsilon c^2\, D (u_2 (t')) \right) dt' \Big)
         \| u (t) \|_H^2
   \Big)
 \leq
   0.
$$
On integrating and applying the equality $u (0) = 0$ we find $\| u (t) \|_H^2 \leq 0$
for all $t \in [0,T]$.
Hence, $u_1 = u_2$, and the solution is unique, as desired.
\end{proof}

Note that if $n = 2$ then $m = 1$ satisfies the condition $m \geq (n+2)/4$.
Hence, we recover the classical result on the existence and uniqueness of a weak solution
to the Navier-Stokes equations in dimension $2$,
   see for instance Theorem 3.2 of \cite[p.~198]{Tema79}.

It is also worth pointing out that
$$
   L^\infty (I,H) \cap L^2 (I,S)
 \hookrightarrow
   L^{\frac{\scriptstyle 2}{\scriptstyle 1 - \vartheta}} (I, H^{(1-\vartheta) m} (\cX,F^i))
$$
for all $\vartheta \in [0,1]$,
   see \cite{LionMage72} for an interpolation argument.
The reader may also consult \cite{BatzBere67}.

\part{Limit to the Navier-Stokes equations}
\label{s.lttNSeq}

\section{More regular solutions}
\label{s.moresol}

Our purpose in this section is to prove that on assuming more regularity of the data $f$ and
$u_0$ we can obtain more regular solutions to the regularised Navier-Stokes equations,
   cf. Theorem 3.5 of \cite[p.~202]{Tema79}.

The domain $\cD_V$ of the closed unbounded operator $V$ in $H$ is well known to consist of
all sections $u \in H^{2m} (\cX;F^i)$ which satisfy $A^\ast u = 0$ in $\cX$ and vanish up to
order $m$ on the boundary of $\cX$.

\begin{lemma}
\label{l.regtfNS}
Suppose $m \geq (n+2)/4$.
If
   $u \in \cD_V$
then
   $N (u) \in H$
and
$$
   \| N (u) \|_H \leq c\, \| u \|_H^{1/2} D (u)^{1/2}\, \| (V + \Id) u \|_H^{1/2}
$$
with $c$ a constant independent of $u$.
\end{lemma}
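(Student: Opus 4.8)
The plan is to estimate $\| N(u) \|_H = \| N(u,u) \|_{L^2(\cX,F^i)}$ by duality, testing against sections $w \in L^2(\cX,F^i)$ and invoking the Sobolev multiplication estimate (\ref{eq.tricont}) with the test function placed in $H^0$, i.e.\ with $s_3 = 0$. The whole point is then a bit of interpolation bookkeeping. First I would fix $s_1 = m/2$ and $s_2 + 1 = 3m/2$. These satisfy $s_1, s_2 \geq 0$, $s_1 \leq m$ and $m \leq s_2 + 1 \leq 2m$, and, crucially, $s_1 + s_2 = 2m - 1$, so the admissibility requirement $s_1 + s_2 + s_3 \geq n/2$ in (\ref{eq.tricont}) is exactly the standing hypothesis $m \geq (n+2)/4$. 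Moreover $m/2 < 2m - 1$ and $3m/2 - 1 < 2m - 1$ for $m \geq 1$ (and $m \geq (n+2)/4$ forces $m \geq 1$), so none of $s_1$, $s_2$, $s_3$ equals $n/2$, even in the borderline case $2m - 1 = n/2$; hence the non-strict form of (\ref{eq.tricont}) applies throughout. Taking the supremum over $w$ with $\| w \|_{L^2} = 1$ then shows both that $N(u)$ represents an element of $L^2(\cX,F^i) = H$ and that
\[
   \| N(u) \|_H \leq c\, \| u \|_{H^{m/2}(\cX,F^i)}\, \| u \|_{H^{3m/2}(\cX,F^i)} .
\]

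Next I would interpolate the two factors by (\ref{eq.interpo}): the first between $L^2(\cX,F^i)$ and $H^m(\cX,F^i)$ with $\vartheta = 1/2$, giving $\| u \|_{H^{m/2}(\cX,F^i)} \leq c\, \| u \|_H^{1/2}\, \| u \|_{H^m(\cX,F^i)}^{1/2}$, and the second between $H^m(\cX,F^i)$ and $H^{2m}(\cX,F^i)$ with $\vartheta = 1/2$, giving $\| u \|_{H^{3m/2}(\cX,F^i)} \leq c\, \| u \|_{H^m(\cX,F^i)}^{1/2}\, \| u \|_{H^{2m}(\cX,F^i)}^{1/2}$; both applications are legitimate since $u \in \cD_V \subset H^{2m}(\cX,F^i)$. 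Multiplying, the two half-powers of the $H^m$-norm combine into one full power, whence
\[
   \| N(u) \|_H \leq c\, \| u \|_H^{1/2}\, \| u \|_{H^m(\cX,F^i)}\, \| u \|_{H^{2m}(\cX,F^i)}^{1/2} .
\]

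Finally I would convert the two Sobolev norms on the right into the quantities in the statement. By Lemma~\ref{l.Dirichl} the $H^m$-norm on $S$ is equivalent to $D(\cdot)^{1/2}$, so $\| u \|_{H^m(\cX,F^i)} \leq c\, D(u)^{1/2}$. For the $H^{2m}$-norm I would use that the Dirichlet problem for $V = \varepsilon \iD^m + \nu \iD$ is elliptic (already noted in the proof of Lemma~\ref{l.Dirichl}), so that for $u \in \cD_V$ one has $\| u \|_{H^{2m}(\cX,F^i)} \leq c\, (\| Vu \|_H + \| u \|_H) \leq c\, \| (V+\Id) u \|_H$, the last step because $V \geq 0$ forces $\| (V+\Id) u \|_H \geq \| u \|_H$. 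Substituting both bounds gives precisely
\[
   \| N(u) \|_H \leq c\, \| u \|_H^{1/2}\, D(u)^{1/2}\, \| (V+\Id) u \|_H^{1/2},
\]
with $c$ depending only on $n$, $m$, $\varepsilon$, $\nu$ and the metric data of $F^i$, hence independent of $u$. The routine parts here are the two interpolations and the two norm equivalences; the one step that needs care — and the only place where the hypothesis $m \geq (n+2)/4$ enters — is the choice of the exponents $s_1, s_2, s_3$ in (\ref{eq.tricont}), which must be arranged so that the admissibility condition degenerates to exactly $2m - 1 \geq n/2$ while keeping every $s_j$ off the critical value $n/2$ when equality holds.
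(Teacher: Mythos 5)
Your proposal is correct and follows essentially the same route as the paper's proof: the same choice $s_1 = m/2$, $s_2+1 = 3m/2$, $s_3 = 0$ in (\ref{eq.tricont}), the same two interpolations via (\ref{eq.interpo}) with $\vartheta = 1/2$, and the same elliptic estimate $\| u \|_{H^{2m}(\cX,F^i)} \leq c\, \| (V+\Id) u \|_H$ on $\cD_V$. The only cosmetic differences are that you pass to $\| N(u) \|_H$ by duality at the outset rather than citing the Riesz representation theorem at the end, and you add an explicit (and correct) check that none of the $s_j$ hits the critical value $n/2$ in the borderline case.
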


\begin{proof}
We exploit the estimate of (\ref{eq.tricont}) with
   $\displaystyle s_1 = \frac{1}{2} m$,
   $\displaystyle s_2 + 1 = \frac{1}{2} m + \frac{1}{2} 2m$ and
   $s_3 = 0$,
obtaining
$$
   |(N (u,v),w)|
 \leq
   c\, \| u \|_{H^{s_1} (\cX,F^i)} \| v \|_{H^{s_2+1} (\cX,F^i)} \| w \|_{H^{s_3} (\cX,F^i)}
$$
for all
   $u \in H^{s_1} (\cX,F^i)$,
   $v \in H^{s_2+1} (\cX,F^i)$ and
   $w \in H^{s_3} (\cX,F^i)$,
where $c$ is a constant independent of $u$, $v$ and $w$.
By the interpolation inequality of (\ref{eq.interpo}), the norm of $u$ in $H^{s_1} (\cX,F^i)$
is dominated by $\| u \|_H^{1/2} D (u)^{1/4}$.
If moreover $v$ belongs to $H^{2m} (\cX,F^i)$, then on applying inequality (\ref{eq.interpo})
with
   $s_1 = m$,
   $s_2 = 2m$ and
   $\vartheta = 1/2$,
we deduce that the norm of $v$ in $H^{s_2+1} (\cX,F^i)$ is majorised by
   $D (v)^{1/4} \| v \|_{H^{2m} (\cX,F^i)}^{1/2}$.
Since the Dirichlet problem for the operator $V + \Id$ in $\cX$ is elliptic and has unique
solution, the norm $\| v \|_{H^{2m} (\cX,F^i)}$ is in turn majorised by $\| (V+\Id) v \|_H$,
provided that $v \in \cD_V$.
On summarising we see immediately that there is a constant $c$ with the property that
\begin{equation}
\label{eq.peoNuvw}
   |(N (u,v),w)|
 \leq
   c\, \| u \|_H^{1/2} D (u)^{1/4} D (v)^{1/4} \| (V+\Id) v \|_H^{1/2} \| w \|_H
\end{equation}
for all
   $u \in S$,
   $v \in \cD_V$ and
   $w \in H$.
By the Riesz representation theorem it follows readily that
   $\| N (u,u) \|_H \leq c\, \| u \|_H^{1/2} D (u)^{1/2} \| (V+\Id) u \|_H^{1/2}$
for all sections $u \in \cD_V$, as desired.
\end{proof}

The following lemma is usually referred to as Gronwall's inequality, see
   \cite{Gron19}.
It is an important tool to obtain various estimates in the theory of ordinary
differential equations.
In particular, it provides a comparison theorem that can be used to prove the
uniqueness of a solution to the initial value problem.
There are two forms of the lemma, a differential form and an integral form.
We adduce here the integral form.

\begin{lemma}
\label{l.Gronwall}
Let $a$, $b$ and $y$ be real-valued functions on an interval $I = (0,T)$.
Assume that
   $a$ and $y$ are continuous on the closed interval $\overline{I}$
and
   the negative part of $b$ is locally integrable in $I$.
If $a$ is nonnegative and if $y$ satisfies the integral inequality
$$
   y (t) \leq b (t) + \int_0^t a (t') y (t') dt'
$$
for all $t \in I$, then
$$
   y (t) \leq b (t) + \int_0^t b (s) a (s) \exp \Big( \int_s^t a (t') dt' \Big) ds
$$
for all $t \in I$.
\end{lemma}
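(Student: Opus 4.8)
The plan is to reduce the given integral inequality to a linear differential inequality for the running integral, and then integrate that against a suitable integrating factor. First I would set
\[
   z (t) = \int_0^t a (t')\, y (t')\, dt'
\]
for $t \in \overline{I}$. Since $a$ and $y$ are continuous on $\overline{I}$, the product $a y$ is continuous there, so $z$ is continuously differentiable with $z' = a y$ and $z (0) = 0$. The hypothesis now reads $y (t) \leq b (t) + z (t)$, and multiplying through by $a (t) \geq 0$ (here the sign condition on $a$ is used) gives the differential inequality $z' (t) = a (t)\, y (t) \leq a (t)\, b (t) + a (t)\, z (t)$ for all $t \in I$.

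Next I would introduce the integrating factor $E (t) = \exp \big( - \int_0^t a (s)\, ds \big)$, which is positive and $C^1$ with $E' = - a E$. Multiplying the differential inequality by $E (t) > 0$ turns the left-hand side into a total derivative:
\[
   \frac{d}{dt} \big( z (t)\, E (t) \big)
 = \big( z' (t) - a (t)\, z (t) \big)\, E (t)
 \leq a (t)\, b (t)\, E (t).
\]
The only point deserving a word here is that $a b E$ has locally integrable negative part: this follows because $a$ and $E$ are bounded on compact subintervals and the negative part of $b$ is assumed locally integrable, so the subsequent integration is legitimate (the conclusion being trivially true whenever the right-hand side turns out to be $+\infty$). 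Integrating from $0$ to a fixed $t \in I$ and using $z (0) = 0$ yields $z (t)\, E (t) \leq \int_0^t a (s)\, b (s)\, E (s)\, ds$, and dividing by $E (t) > 0$ while noting that $E (s) / E (t) = \exp \big( \int_s^t a (t')\, dt' \big)$ gives
\[
   z (t) \leq \int_0^t b (s)\, a (s) \exp \Big( \int_s^t a (t')\, dt' \Big)\, ds .
\]

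Finally I would combine this with $y (t) \leq b (t) + z (t)$ to obtain exactly the asserted bound. The argument is essentially routine; no compactness or fixed-point input is needed. The only genuine care is the bookkeeping around the assumption on $b$, ensuring that all the integrals appearing are well defined in $(-\infty, +\infty]$, and the observation that $z$ is actually differentiable, which rests on the continuity of $a$ and $y$ rather than on mere integrability. An alternative would be to iterate the inequality in Picard fashion, but the integrating-factor route is shorter given the continuity hypotheses.
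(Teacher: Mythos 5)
Your proof is correct. The paper does not actually prove this lemma --- its ``proof'' consists of the single line ``See \cite{Bell43}'' --- so there is nothing to compare against except the classical argument, and yours is precisely that: introduce $z(t)=\int_0^t a\,y\,dt'$, use the nonnegativity of $a$ to pass from $y\le b+z$ to the linear differential inequality $z'\le ab+az$, and integrate against the factor $\exp\bigl(-\int_0^t a\bigr)$. All the steps check out: $z$ is genuinely $C^1$ because $a$ and $y$ are continuous on $\overline{I}$, the identity $E(s)/E(t)=\exp\bigl(\int_s^t a\,dt'\bigr)$ is right, and you correctly isolate the one point that needs care, namely that the hypothesis on $b^-$ (together with the local boundedness of $a$ and $E$) makes $\int_0^t abE\,ds$ well defined in $(-\infty,+\infty]$ and renders the conclusion vacuous when that integral is $+\infty$. (One could even note that the hypothesis $y\le b+z$ forces $b\ge y-z$, a continuous lower bound, so the integrability of $b^-$ is essentially automatic once $b$ is measurable.) Your closing remark is also apt: a Picard-type iteration of the inequality would work as well, but under the stated continuity hypotheses the integrating-factor route is the shortest complete argument.
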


If, in addition, the function $b$ is nondecreasing, then
\begin{equation}
\label{eq.Gronwall}
   y (t) \leq b (t)\, \exp \Big( \int_0^t a (t') dt' \Big)
\end{equation}
for all $t \in I$.

\begin{proof}
See \cite{Bell43}.
\end{proof}

It is worth pointing out that there are no assumptions on the signs of the functions
$b$ and $y$.
Compared to the differential form, the differentiability of $y$ is not needed for the
integral form.
There are also versions of Gronwall's inequality which need not any continuity of $a$
and $y$.

The lemma is very useful when establishing a uniform estimate in a priori esimates.

\begin{theorem}
\label{t.regtfNS}
Suppose $m \geq (n+2)/4$.
Let
   $f \in L^2 (I,H)$ and
   $u_0 \in S$.
Then there is a unique solution $u$ of problem (\ref{eq.NSweakf}) (or (\ref{eq.NSpowif}))
which satisfies
   $u \in L^2 (I,\cD_V)$ and
   $u' \in L^2 (I,H)$,
i.e.,
\begin{equation}
\label{eq.domrNSe}
\begin{array}{rcl}
   u & \in & H^{\bs (1)} (\cC,F^{i}),
\\
   p & \in & L^2 (I,H^1 (\cX,F^{i-1})).
\end{array}
\end{equation}
\end{theorem}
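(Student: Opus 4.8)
The proof runs parallel to that of Theorem~\ref{t.regtflNS}, the only genuinely new ingredient being the treatment of the nonlinear term via Lemma~\ref{l.regtfNS} and Gronwall's inequality. Since the embedding $S \hookrightarrow H$ is compact, the Friedrichs extension $V$ of $\varepsilon \iD^m + \nu \iD$ has compact resolvent, so there is an orthonormal basis $(e_j)$ of $H$, complete in $S$ and contained in $\cD_V$, with $V e_j = \lambda_j e_j$, $\lambda_j \geq 0$; with respect to this basis the $H$-- and $D$--orthogonal projections onto the span of $e_1, \dots, e_k$ coincide. We run the Galerkin scheme (\ref{eq.appsNSc}) with this basis and take $u_{0,k}$ to be that projection of $u_0$, so that $u_{0,k} \to u_0$ in $S$ and $D(u_{0,k}) \leq D(u_0)$. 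Estimates (\ref{eq.uboasiH}) and (\ref{eq.uboasiS}) are unaffected, hence $u_k$ is bounded in $L^\infty (I,H) \cap L^2 (I,S)$; in particular $D(u_k)$ is bounded in $L^1 (I)$ uniformly in $k$.

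For the new a priori estimate we test with $Vu_k$: multiply the first equality of (\ref{eq.appsNSc}) by $\lambda_j\overline{c_{k,j}}(t)$ and sum over $j = 1, \dots, k$, which is legitimate because the $e_j$ are eigenfunctions of $V$. Since $u_k (t) \in \cD_V$, the pairing $(N (u_k (t)), V u_k (t))$ is a genuine inner product in $H$, and one arrives at
\[
   \frac{1}{2}\, \partial_t D_V (u_k (t)) + \| V u_k (t) \|_H^2 + \Re\, (N (u_k (t)), V u_k (t))
 = \Re\, (f (t), V u_k (t)),
\]
where $D_V (u) := (Vu,u) = D(u) - \| u \|_H^2$. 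The term $\Re\,(f,Vu_k)$ is absorbed into $\| f \|_H^2 + \frac{1}{4}\, \| Vu_k \|_H^2$, while for the nonlinear term we apply Lemma~\ref{l.regtfNS}, the bound $\| (V+\Id) u_k \|_H \leq \| V u_k \|_H + \| u_k \|_H$, the uniform $L^\infty (I,H)$ bound on $u_k$, and Young's inequality, absorbing a small multiple of $\| Vu_k \|_H^2$; what remains is controlled by $\| f (t) \|_H^2 + c\,(1 + D(u_k(t)) + D(u_k(t))^2)$. Writing $D(u_k)^2 = D(u_k) \cdot D(u_k)$ and putting $a (t) := c\, D(u_k (t))$ we obtain a differential inequality
\[
   \partial_t D_V (u_k (t)) \leq g_k (t) + a (t)\, D_V (u_k (t)),
\]
with $a \in L^1 (I)$ and $g_k \geq 0$ bounded in $L^1 (I)$, both uniformly in $k$. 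Gronwall's inequality (Lemma~\ref{l.Gronwall}) then bounds $D_V (u_k (t))$ uniformly in $k$ and $t \in \overline{I}$, i.e. $u_k$ is bounded in $L^\infty (I,S)$; integrating the inequality once more over $I$ bounds $\int_0^T \| Vu_k (t) \|_H^2\, dt$ uniformly, and the ellipticity of the Dirichlet problem for $V + \Id$ (Lemma~\ref{l.Dirichl}, as in the proof of Theorem~\ref{t.regtflNS}) converts this into a uniform bound on $u_k$ in $L^2 (I,\cD_V)$.

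Passing to a subsequence, $u_k$ converges weakly-$\ast$ in $L^\infty (I,S)$, weakly in $L^2 (I,\cD_V)$, and --- by a compactness argument as in the proof of Theorem~\ref{t.exnonNS} --- strongly in $L^2 (I,H)$; as in that proof the limit satisfies (\ref{eq.NSweakf}), so by Theorem~\ref{t.uniqrNS} it coincides with the unique solution $u$, which therefore lies in $L^\infty (I,S) \cap L^2 (I,\cD_V)$. Inserting this into the identity $u' = f - Vu - N(u)$ of (\ref{eq.NSpowif}), the first two terms belong to $L^2 (I,H)$ because $u \in L^2 (I,\cD_V)$, and $N(u) \in L^2 (I,H)$ by Lemma~\ref{l.regtfNS} (using $u \in L^\infty (I,S)$ and $(V+\Id) u \in L^2 (I,H)$); hence $u' \in L^2 (I,H)$. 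As $u \in L^2 (I,H^{2m}(\cX,F^i))$ and $u' \in L^2 (I,H)$, the section $u$ is of class $H^{\bs (1)} (\cC,F^i)$ by the definition of the Slobodetskij class; finally, substituting $u$ into (\ref{eq.sefp}) gives $p = A^\ast G (\Id - P)(f - N(u))$ with $f - N(u) \in L^2 (I,H)$ and $A^\ast G$ a pseudodifferential operator of order $-1$, so $p \in L^2 (I,H^1 (\cX,F^{i-1}))$, the Neumann problem after Spencer at step $i$ being elliptic by the standing assumption.

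The main obstacle is the $L^\infty (I,S)$ bound. After Lemma~\ref{l.regtfNS} and Young's inequality, the nonlinear contribution $(N (u_k), V u_k)$ leaves a quadratic dependence on the energy $D(u_k)$, which taken at face value could force $D_V (u_k)$ to blow up in finite time. The way out --- and precisely where the restriction $m \geq (n+2)/4$ is used, through Lemma~\ref{l.regtfNS} --- is that one of the two factors of $D(u_k)$ is merely $L^1$ in time, by the basic energy estimate (\ref{eq.uboasiS}), and hence can be absorbed as the integrable coefficient $a (t)$ in Gronwall's lemma, so that the higher energy remains bounded on all of $[0,T]$ rather than on a short interval only.
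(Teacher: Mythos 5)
Your proposal is correct and follows essentially the same route as the paper: Galerkin approximation in a basis of eigenfunctions of the Stokes-type operator, testing with $Vu_k$ (the paper uses $(V+\Id)u_k$, an inessential variant), the estimate of Lemma~\ref{l.regtfNS} combined with Young's inequality to reach a differential inequality with an $L^1$-in-time coefficient supplied by the basic energy bound, and Gronwall to get the uniform $L^\infty(I,S)$ and $L^2(I,\cD_V)$ bounds before passing to the limit. Your closing remark on why the quadratic dependence on $D(u_k)$ does not force finite-time blowup is exactly the mechanism the paper exploits.
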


Moreover, the proof shows that the solution $u$ given by this theorem belongs to
   $C (\overline{I},S)$.

\section{Proof of Theorem \ref{t.regtfNS}}
\label{s.proofTh}

\begin{proof}
Consider the Galerkin approximation used in the proof of weak solutions in Theorem
\ref{t.exnonNS}.
In order to obtain further regularity properties of solutions, we choose the basis
functions $e_i$ as the eigenfunctions of the operator $V+\Id$ in (\ref{eq.NSpowif}).
We get $e_i \in \cD_V$ and $(V+\Id) e_i = \lambda_i e_i$ for all $i = 1, 2, \ldots$,
where $\lambda_i \geq 1$, for $V$ is nonnegative.
Write $u_{0,k}$ for the orthogonal projection in $S$ of $u_0$ onto the space spanned by
the elements $e_1, \ldots, e_k$.
Hence it follows that $u_{0,k}$ converges to $u_0$ strongly in $S$ as $k \to \infty$.

The first equations of (\ref{eq.appsNSc}) read
$$
   (u_k',e_j) + (V u_k,e_j) + (N (u_k),e_j) = (f,e_j)
$$
for all $j = 1, \ldots, k$.
On multiplying both sides of these equalities by $\lambda_j$ we obtain immediately
$$
   (u_k',(V+\Id) e_j) + (V u_k,(V+\Id) e_j) + (N (u_k),(V+\Id) e_j) = (f,(V+\Id) e_j),
$$
and so multiplying the latter equalities by $c_{k,j} (t)$ and summing up over
$j = 1, \ldots, k$ yields
\begin{equation}
\label{eq.uoj1lky}
   (u_k',(V+\Id) u_k) + (V u_k,(V+\Id) u_k) + (N (u_k),(V+\Id) u_k) = (f,(V+\Id) u_k)
\end{equation}
a.e. on $I$.
Since
$$
   \Re\, (u_k',(V+\Id) u_k) = \frac{1}{2}\, \partial_t D (u_k)
$$
and
\begin{eqnarray*}
   |(N (u_k),(V+\Id) u_k)|
 & \leq &
   \| N (u_k) \|_H \| (V+\Id) u_k \|_H
\\
 & \leq &
   c\,
   \| u_k \|_H^{1/2}
   D (u_k)^{1/2}\,
   \| (V + \Id) u_k \|_H^{3/2}
\end{eqnarray*}
which is due to Lemma \ref{l.regtfNS},
Therefore, from (\ref{eq.uoj1lky}) we conclude by the Schwarz inequality that
\begin{eqnarray*}
\lefteqn{
   \frac{1}{2}\, \partial_t D (u_k) + \| (V+\Id) u_k \|_H^2
}
\\
 & \leq &
   \big( \| f \|_H + \| u_k \|_H \big) \| (V+\Id) u_k \|_H
 + c\, \| u_k \|_H^{1/2} D (u_k)^{1/2}\, \| (V + \Id) u_k \|_H^{3/2}
\end{eqnarray*}
a.e. on $I$.

By Young's inequality,
\begin{eqnarray*}
\lefteqn{
   2 \big( \| f \|_H + \| u_k \|_H \big) \| (V+\Id) u_k \|_H
 + 2c\, \| u_k \|_H^{1/2} D (u_k)^{1/2}\, \| (V + \Id) u_k \|_H^{3/2}
}
\\
 & \leq &
   a_k (t) D (u_k) + b_k (t) + \| (V+\Id) u_k \|_H^2,
   \hspace{4cm}
\end{eqnarray*}
where
$$
\begin{array}{rcl}
   a_k (t)
 & = &
   \displaystyle \frac{27}{2} c^4 \| u_k \|_H^2 D (u_k),
\\
   b_k (t)
 & = &
   2 \big( \| f \|_H + \| u_k \|_H \big)^2.
\end{array}
$$
We thus arrive at the differential inequality
\begin{equation}
\label{eq.difineq}
   \partial_t D (u_k) + \| (V+\Id) u_k \|_H^2 \leq a_k (t) D (u_k) + b_k (t)
\end{equation}
for almost all $t \in I$.
Since $u_k$ are bounded uniformly in $k$ both in $L^\infty (I,H)$ and $L^2 (I,S)$,
it follows that
$$
   \int_t^{t+r} a_k (t') dt'
 \leq
   \frac{27}{2} c^4\,
   \sup_{t' \in I} \| u_k (t') \|_H^2
   \int_{t}^{t+r} D (u_k (t')) dt'
 \leq
   A
$$
for all $t \in I$ satisfying $t+r \leq T$, the constant $A$ being independent of $k$.
Similarly we obtain
$$
   \int_t^{t+r} b_k (t') dt'
 \leq
   4 \int_t^{t+r} \big( \| f (t') \|_H^2 + \| u_k (t') \|_H^2 \big) dt'
 \leq
   B
$$
for all $k$, with $B$ a constant independent of $k$.
We now drop the term $\| (V+\Id) u_k \|_H^2$ in (\ref{eq.difineq}) and apply the uniform
Gronwall inequality of Lemma \ref{l.Gronwall} to the resulting inequality to get
\begin{equation}
\label{eq.uniesiS}
   \sup_{t \in [0,T]} D (u_k (t)) \leq C
\end{equation}
uniformly in $k$.
This just amounts to saying that the sequence $\{ u_k \}$ belongs to a bounded set in
$L^\infty (I,S)$.

Now go back to inequality (\ref{eq.difineq}) and integrate it in $t$ over the interval $I$.
This gives
$$
   D (u_k (T)) + \int_0^T \| (V+\Id) u_k (t) \|_H^2 dt
 \leq
   D (u_0) + \int_0^T a_k (t) D (u_k (t)) dt + \int_0^T b_k (t) dt
$$
for all $k = 1, 2, \ldots$.
Hence,
\begin{equation}
\label{eq.uniesiD}
   \int_0^T \| (V+\Id) u_k (t) \|_H^2 dt
 \leq
   D (u_0) + A\, C + B
\end{equation}
holds uniformly in $k$, where $C$ is a constant from (\ref{eq.uniesiS}).
We have thus proved that $\{ u_k \}$ belongs to a bounded set in $L^2 (I,\cD_V)$.

The passage to the limit and the uniqueness is established as above.
We can conclude that some subsequence of $\{ u_k \}$ converges to the solution $u$
which belongs to $L^\infty (I,S) \cap L^2 (I,\cD_V)$.

Our next goal is to show that $u' \in L^2 (I,H)$.
For this purpose we use Lemma \ref{l.regtfNS} to estimate
\begin{eqnarray*}
   \int_0^T \| N (u) \|_H^4 dt
 & \leq &
   c \int_0^T \| u \|_H^2 D (u)^2\, \| (V + \Id) u \|_H^2 dt
\\
 & \leq &
   c\, \| u \|_{L^\infty (I,H)}^2 \| u \|_{L^\infty (I,S)}^4 \| u \|_{L^2 (I,\cD_V)}^2,
\end{eqnarray*}
where $c$ is a constant independent of $u$.
This implies $N (u) \in L^4 (I,H)$.
On the other hand, since $V + \Id$ is an isomorphism of $\cD_V$ onto $H$, it follows
easily that $Vu \in L^2 (I,H)$.
Therefore, the derivative $u' = f - Vu - N (u)$ belongs to $L^2 (I,H)$, as desired.

Now
   $u \in L^\infty (I,S) \cap L^2 (I,\cD_V)$ and
   $u' \in L^2 (I,H)$
imply $u \in C (\overline{I},S)$ due to the lemma below.
This establishes the strong convergence $u (t) \to u (0)$ in $S$, as $t \to 0$.
\end{proof}

\begin{lemma}
\label{l.tetscut}
Suppose that
   $u \in L^\infty (I,S) \cap L^2 (I,\cD_V)$ and
   $u' \in L^2 (I,H)$.
Then $u \in C (\overline{I},S)$.
\end{lemma}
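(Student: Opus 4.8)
The plan is to recognise the statement as an instance of the Lions--Magenes interpolation lemma (Lemma~\ref{l.interLM}), applied not to the triple $S \hookrightarrow H \hookrightarrow S'$ but to a Gelfand triple built \emph{around} $S$. To this end I would introduce $\cD_V'$, the antidual of $\cD_V$ with $S$ as pivot space. Since $\mathcal{S} \subset \cD_V \subset S$ and $\mathcal{S}$ is dense in $S$, the space $\cD_V$ is dense in $S$; moreover $\| u \|_S \leq c\, \| u \|_{\cD_V}$, because $V + \Id \geq \Id$ and $\sqrt{D (u)}$ is an equivalent norm on $S$. Hence one obtains continuous dense embeddings
\begin{equation*}
   \cD_V \hookrightarrow S \hookrightarrow \cD_V' ,
\end{equation*}
which form a Gelfand triple of Hilbert spaces of exactly the kind required in Lemma~\ref{l.interLM}.

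The first substantive step is to check that $H$ embeds continuously into $\cD_V'$, so that $L^2 (I,H)$ is identified with a subspace of $L^2 (I,\cD_V')$. For this, note that by (\ref{eq.Frieext}) one has $D (\phi,h) = ((V + \Id) \phi, h)$ for $\phi \in \cD_V$ and $h \in S$, and the right-hand side is an $H$-inner product (up to conjugation) since $(V + \Id) \phi \in H$. The formula $\langle h, \phi \rangle := ((V + \Id) \phi, h)$ therefore already makes sense for \emph{every} $h \in H$, extends the pairing of $\cD_V'$ and $\cD_V$ induced by the pivot $S$, and satisfies $|\langle h, \phi \rangle| \leq \| h \|_H\, \| (V + \Id) \phi \|_H \leq c\, \| h \|_H\, \| \phi \|_{\cD_V}$. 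Thus $H \hookrightarrow \cD_V'$ continuously, and so $u' \in L^2 (I,H)$ implies $u' \in L^2 (I,\cD_V')$.

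Having disposed of this, the hypotheses $u \in L^2 (I,\cD_V)$ and $u' \in L^2 (I,\cD_V')$ are precisely what is needed to invoke Lemma~\ref{l.interLM} with the triple $(\cD_V, S, \cD_V')$ in place of $(S, H, S')$. The lemma then yields that $u$ is almost everywhere equal to a continuous function on $I$ with values in $S$, which is the assertion. (The extra hypothesis $u \in L^\infty (I,S)$ is not actually used here; it is recorded only because it is already available from the a priori estimates of Section~\ref{s.proofTh}.)

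The point demanding care is purely the bookkeeping of dualities: one must keep $S$ as the pivot throughout and resist simultaneously identifying $H$ with its own dual, since the two identifications are incompatible along the chain $\cD_V \hookrightarrow S \hookrightarrow H$. A more computational route that sidesteps this altogether is to expand $u (t) = \sum_i c_i (t) e_i$ in the $H$-orthonormal eigenbasis $( e_i )$ of $V + \Id$ with eigenvalues $\lambda_i \geq 1$: then $\frac{d}{dt} \sum_i \lambda_i |c_i (t)|^2$ is the sum of the series $\sum_i 2 \lambda_i \Re (\overline{c_i (t)}\, c_i' (t))$, which converges in $L^1 (I)$ by the Cauchy--Schwarz inequality because $\sum_i \lambda_i^2 \| c_i \|_{L^2 (I)}^2 \asymp \| u \|_{L^2 (I,\cD_V)}^2$ and $\sum_i \| c_i' \|_{L^2 (I)}^2 = \| u' \|_{L^2 (I,H)}^2$ are both finite; this gives continuity of $t \mapsto D (u (t)) = \| u (t) \|_S^2$ off a null set, and, combined with the weak continuity of $u : I \to S$ (a consequence of $u \in C (\overline{I},H)$, obtained from Lemma~\ref{l.interLM} for the original triple), delivers strong continuity of $u$ in $S$.
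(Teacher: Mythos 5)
Your main argument is correct, and it takes a genuinely different route from the paper. The paper proves the lemma in two steps: it first obtains weak continuity of $u : [0,T] \to S$ from $u \in L^\infty (I,S)$ together with $u \in C (\overline{I},H)$, and then shows that the norm $t \mapsto D (u (t))$ is continuous by justifying the identity $\partial_t D (u (t)) = 2 \Re\, (u' (t), (V + \Id) u (t))$ through a mollification argument; strong continuity in $S$ then follows from weak continuity plus convergence of norms. You instead apply the Lions--Magenes interpolation lemma (Lemma \ref{l.interLM}) directly to the shifted Gelfand triple $\cD_V \hookrightarrow S \hookrightarrow \cD_V'$ with $S$ as pivot, after verifying the one nontrivial point, namely that $H$ embeds continuously into $\cD_V'$ via $h \mapsto \bigl( \phi \mapsto ((V + \Id) \phi, h)_H \bigr)$, which extends the canonical embedding of $S$ because of (\ref{eq.Frieext}) and is injective since $V + \Id$ maps $\cD_V$ onto $H$. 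This is cleaner: it dispenses with the mollification step and with the hypothesis $u \in L^\infty (I,S)$ altogether (which, as you note, is in any case implied by the conclusion), at the cost of the duality bookkeeping you correctly flag -- one must not identify $H$ with $H'$ while $S$ serves as pivot. Your alternative spectral argument at the end is essentially the paper's proof in disguise: expanding in the $H$-orthonormal eigenbasis of $V + \Id$ and differentiating $\sum_i \lambda_i |c_i (t)|^2$ termwise replaces the paper's mollification, and it still needs $u \in L^\infty (I,S)$ to get weak continuity in $S$; the Cauchy--Schwarz estimate you give to justify the $L^1 (I)$ convergence of the differentiated series is the exact spectral counterpart of the paper's bound on $(u_\iota', (V + \Id) u_\iota)$. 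Both of your routes are sound.
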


\begin{proof}
Fix an arbitrary point $t_0 \in [0,T]$.
We have to show that $D (u (t) - u (t_0)) \to 0$, as $t \to t_0$.
That is,
$$
   \lim_{t \to t_0}
   \big( D (u (t)) - 2 \Re\, D (u (t),u (t_0)) + D (u (t_0)) \Big)
 = 0.
$$

A direct calculation shows that $D (u (t),u (t_0))$ reduces to
$$
   \varepsilon\, (\iD^{m/2} u (t),\iD^{m/2} u (t_0))
 + \nu \big( (A u (t),A u (t_0)) + (A^\ast u (t),A^\ast u (t_0)) + (u (t),u (t_0)) \big).
$$
By Lemma \ref{l.interLM}, the function $u$ is equal a.e. to a continuous function on
$I$ with values in $H$, for
   $u \in L^2 (I,S)$ and
   $u' \in L^2 (I,S')$.
Since moreover
   $u \in L^\infty (I,S)$ and
   the embedding $S \hookrightarrow H$ is continuous and has dense range,
it follows that $u$ is weakly continuous from $[0,T]$ into $S$, i.e.,
   the function $t \mapsto D (u (t),v)$ is continuous for all $v \in S$,
see Lemma 1.4 in \cite[p.~178]{Tema79}.
In particular,
   $D (u (t),u (t_0)) \to D (u (t_0))$,
as $t \to t_0$.

We will now show that $D (u (t)) \to D (u (t_0))$, as $t \to t_0$.
To this end, we first establish that
$$
   \partial_t\, D (u (t)) = 2 \Re\, (u', (V + \Id) u)
$$
holds in the sense of distributions on $(0,T)$.
Note that both
   $t \mapsto D (u (t))$ and
   $t \mapsto (u' (t),(V + \Id) u (t))$
are integrable functions on the interval $I$, for
   $u \in L^2 (I,\cD_V)$ and
   $u' \in L^2 (I,H)$.

Choose any sequence of smooth functions $u_\iota \in C^\infty (\R,\cD_V)$ which is
obtained by mollifying the function which is equal to $u$ on the interval $[0,T]$ and
to zero outside this interval.
(Mollifiers are also known as approximations to the identity.)
Since $u \in L^2 (I,\cD_V)$, the sequence $\{ u_\iota \}$ converges to $u$ in the norm
of this space.
Similarly, as $u' \in L^2 (I,H)$, the sequence of derivatives $\{ u_\iota' \}$ converges
to $u'$ in the $L^2 (I,H)\,$-norm.
This implies that
$$
\begin{array}{rcl}
   D (u_\iota (t))
 & \to
 & D (u (t)),
\\
   (u_\iota' (t), (V + \Id) u_\iota (t))
 & \to
 & (u' (t), (V + \Id) u (t))
\end{array}
$$
in $L^1 (I)$, as $\iota \to \infty$.
To clarify the second limit passage, we argue in the following way.
We get
\begin{eqnarray*}
\lefteqn{
   \int_0^T |(u', (V + \Id) u) - (u_\iota', (V + \Id) u_\iota)|\, dt
}
\\
 & \! \leq \! &
   \int_0^T
   \big( \| u' - u_\iota' \|_H \| (V + \Id) u \|_H
       + \| u_\iota' \|_H \| (V + \Id) u - (V + \Id) u_\iota \|_H
   \big)
   dt
\\
 & \! \leq \! &
   \| u'\!-\!u_\iota' \|_{L^2 (I,H)} \| (V\!+\!\Id) u \|_{L^2 (I,H)}
 + \| u_\iota' \|_{L^2 (I,H)} \| (V\!+\!\Id) u - (V\!+\!\Id) u_\iota \|_{L^2 (I,H)}
\end{eqnarray*}
for all $\iota$.
Since
   $u \in L^2 (I,\cD_V)$,
   the norms $\| u_\iota' \|_{L^2 (I,H)}$ are bounded uniformly in $\iota$ and
   $(V + \Id) u_\iota$ converges to $(V + \Id) u$ in the $L^2 (I,H)\,$-norm,
we see immediately that
   $(u_\iota', (V + \Id) u_\iota) \to (u', (V + \Id) u)$
in $L^1 (I)$ and, in particular, in the sense of distributions on $I$,
   as $\iota \to \infty$.

Note that from $D (u_\iota (t)) \to D (u(t))$ in the sense of distributions on $I$ it
follows that
$$
   \partial_t D (u_\iota (t)) \to \partial_t D (u (t))
$$
in the same sense on $I$.
Thus,
$
   \partial_t\, D (u (t)) = 2 \Re\, (u' (t), (V + \Id) u (t))
$
is valid in the sense of distributions on $(0,T)$.
Since
$
   (u' (t), (V + \Id) u (t))
$
is integrable on $I$, we write
$$
   D (u (t)) - D (u (t_0)) = 2 \Re \int_{t_0}^t (u' (t'), (V + \Id) u (t')) dt'
$$
and conclude that $D (u (t)) \to D  (u (t_0))$, as $t \to t_0$, showing the lemma.
\end{proof}

\section{Estimates for the solution uniform in parameter}
\label{s.eftsuip}

When comparing to the conventional Navier-Stokes equations, the regularised equations
of (\ref{eq.NSpowif}) contain a higher order viscosity term $\iD^m u$ multiplied by a
small parameter $\varepsilon > 0$.
Theorem \ref{t.exnonNS} asserts that, given any data
   $f \in L^2 (I,S')$ and
   $u_0 \in H$,
problem (\ref{eq.NSpowif}) possesses at least one solution
   $u \in L^2 (I,S) \cap L^\infty (I,H)$
satisfying
   $u' \in L^1 (I,S')$.
Moreover, if $m$ is large enough, to wit $m \geq (n+2)/4$, then this solution is unique.
From now on we write $u_{\varepsilon}$ for this solution to point out its dependence upon
$\varepsilon$.
Our next concern will be the behaviour of $u_{\varepsilon}$ when $\varepsilon \to 0$.
One might expect naively that the family $u_{\varepsilon}$ converges to a weak solution
of the conventional equations.
However, the problem under study is specified within the framework of singularly perturbed
boundary value problems, let alone nonlinear, see \cite{GlebKiseTark18}.
If $\varepsilon = 0$, then the order of the regularised Navier-Stokes equations in $x$
reduces to $2$.
This corresponds to $m = 1$, in which case the condition $m \geq (n+2)/4$ is fulfilled
only for small dimensions $n = 1$ and $n = 2$.
While the existence of a weak solution $u \in L^2 (\cC,F^i)$ is still known for all $n > 2$,
see \cite{Hopf51}, there have been indirect plausible arguments showing that the solution
fails to be unique.
Hence, there seems to be no natural weak solution of the conventional Navier-Stokes equations
to which the family $u_{\varepsilon}$ might converge.
Perhaps the family has a number of accumulation points, each of them being a weak solution
to the classical Navier-Stokes equations in certain sense.
Whatever the case one needs a priori estimates for the solution $u_{\varepsilon}$ uniform
in $\varepsilon$ to establish the existence of accumulation points in a weak topology.

\begin{lemma}
\label{l.aeuniie}
Let $m \geq (n+2)/4$.
If $f \in L^2 (I,H)$, then the weak solution of Theorem \ref{t.exnonNS} satisfies
\begin{eqnarray*}
\lefteqn{
   \| u_{\varepsilon} (t) \|_H^2
 + 2 \nu
   \int_0^t
   e^{(1/c) (t-t')}
   \big( \| A u_{\varepsilon} (t') \|_H^2 + \| A^\ast u_{\varepsilon} (t') \|_H^2 \big)
   dt'
}
\\
 & \leq &
   e^{(1/c) t}\, \| u_0 \|_H^2
 + c
   \int_0^t
   e^{(1/c) (t-t')}\, \| f (t') \|_H^2 dt'
   \hspace{1cm}
\end{eqnarray*}
for almost all $t \in I$, where
   $c$ is an arbitrary positive constant independent of $t$ and $\varepsilon$.
\end{lemma}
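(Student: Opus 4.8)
The plan is to establish the inequality first for the Galerkin approximants $u_k$ constructed in the proof of Theorem~\ref{t.exnonNS}, keeping track of an exponential integrating factor, and then to pass to the limit in the same manner as in the derivation of the energy inequality (\ref{eq.enerest}). The feature of the argument that renders the bound uniform in $\varepsilon$ is that the contribution of the higher order viscosity to $2(Vu_k,u_k)$, namely $2\varepsilon\,\|\iD^{m/2}u_k\|_H^2$, is nonnegative and will simply be discarded.

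First I would recall that, on multiplying the first equality of (\ref{eq.appsNSc}) by $c_{k,j}(t)$, summing over $j=1,\dots,k$ and using $(N(u_k),u_k)=0$, one obtains
$$
   \partial_t\|u_k(t)\|_H^2 + 2(Vu_k(t),u_k(t)) = 2\,\Re\,(f(t),u_k(t))
$$
for almost all $t\in I$. By (\ref{eq.Dirichl}) one has $(Vu_k,u_k)=D(u_k)-\|u_k\|_H^2=\varepsilon\,\|\iD^{m/2}u_k\|_H^2+\nu(\|Au_k\|_H^2+\|A^\ast u_k\|_H^2)$, hence, since $\varepsilon>0$,
$$
   (Vu_k,u_k)\geq\nu\big(\|Au_k\|_H^2+\|A^\ast u_k\|_H^2\big).
$$
Estimating the right-hand side by $2\|f\|_H\|u_k\|_H\leq(1/c)\,\|u_k\|_H^2+c\,\|f\|_H^2$ with an arbitrary constant $c>0$, transferring the term $(1/c)\|u_k\|_H^2$ to the left and rewriting the left side with the integrating factor $e^{-t/c}$, I would arrive at
$$
   \partial_t\big(e^{-t/c}\|u_k(t)\|_H^2\big) + 2\nu\,e^{-t/c}\big(\|Au_k(t)\|_H^2+\|A^\ast u_k(t)\|_H^2\big) \leq c\,e^{-t/c}\|f(t)\|_H^2
$$
for almost all $t\in I$. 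Integrating this from $0$ to $t$, multiplying back by $e^{t/c}$ and using $\|u_{0,k}\|_H\leq\|u_0\|_H$ yields precisely the asserted inequality with $u_k$ in place of $u_\varepsilon$.

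To conclude I would pass to the limit along the subsequence from the proof of Theorem~\ref{t.exnonNS}, which converges to $u_\varepsilon$ strongly in $L^2(I,H)$, weakly in $L^2(I,S)$ and weak-$\ast$ in $L^\infty(I,H)$; in particular $Au_k\to Au_\varepsilon$ and $A^\ast u_k\to A^\ast u_\varepsilon$ weakly in $L^2(I,\cdot)$, because $A$ and $A^\ast$ are bounded from $S$ into $L^2(\cX,\cdot)$. Exactly as in the passage from the Galerkin identity to (\ref{eq.enerest}), I would multiply the inequality just obtained for $u_k$ by an arbitrary nonnegative $\phi\in C^\infty_{\mathrm{comp}}(I)$ and integrate in $t$: the term involving $\|u_k(t)\|_H^2$ converges by the strong $L^2(I,H)$ convergence, each dissipation term becomes, after interchanging the order of integration, a weighted square $L^2$-norm of $Au_k$ or $A^\ast u_k$ with bounded nonnegative weight and is therefore weakly lower semicontinuous, and the right-hand side is independent of $k$. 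Passing to the lower limit and then letting $\phi\geq0$ be arbitrary gives the stated estimate for almost every $t\in I$.

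The only delicate point is this last passage to the limit, specifically the weak lower semicontinuity of the weighted dissipation integrals; but it is entirely routine and mirrors the argument already carried out for (\ref{eq.enerest}). The genuinely new ingredient is merely the remark that dropping the nonnegative term $2\varepsilon\,\|\iD^{m/2}u_k\|_H^2$ removes every trace of $\varepsilon$ from the resulting bound.
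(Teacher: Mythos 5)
Your proposal is correct, and the core computation — the Galerkin energy identity, Young's inequality with the free constant $c$, the integrating factor $e^{-t/c}$, and above all the observation that $(Vu,u) \geq \nu\bigl(\|Au\|_H^2 + \|A^\ast u\|_H^2\bigr)$ so that the $\varepsilon\,\|\iD^{m/2}u\|_H^2$ contribution can simply be discarded — coincides with what the paper does. The difference is where the computation is performed. The paper tests equation (\ref{eq.NSpowif}) directly with $u_\varepsilon(t)$ itself: this is legitimate because, for $m \geq (n+2)/4$, Theorem \ref{t.uniqrNS} has already upgraded the time derivative to $u_\varepsilon' \in L^2(I,S')$, so Lemma \ref{l.interLM} supplies the chain rule $\partial_t\|u_\varepsilon\|_H^2 = 2\Re\,(u_\varepsilon',u_\varepsilon)$ and one obtains an exact differential identity before estimating, with no limit passage needed. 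You instead derive the estimate at the level of the approximants $u_k$ and recover it for $u_\varepsilon$ by multiplying by nonnegative $\phi \in C^\infty_{\mathrm{comp}}(I)$ and using weak lower semicontinuity of the weighted dissipation integrals, exactly as in the derivation of (\ref{eq.enerest}). Your route is slightly longer but more robust: it does not rely on the regularity $u' \in L^2(I,S')$ and so would survive in situations where only the Leray--Hopf-type construction is available (e.g.\ the unregularised case), whereas the paper's route is shorter because that regularity is already in hand. Both the lower-semicontinuity step (the weight $\int_{t'}^T \phi(t)\,e^{(t-t')/c}\,dt$ is bounded and nonnegative, and $Au_k \rightharpoonup Au_\varepsilon$, $A^\ast u_k \rightharpoonup A^\ast u_\varepsilon$ weakly in $L^2$ since $u_k \rightharpoonup u_\varepsilon$ weakly in $L^2(I,S)$) and the bound $\|u_{0,k}\|_H \leq \|u_0\|_H$ are sound, so no gap remains.
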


\begin{proof}
Since $u_{\varepsilon} \in L^2 (I,S)$, we can multiply both sides of (\ref{eq.NSpowif})
by this section, obtaining
$$
   (u_\varepsilon' (t),u_\varepsilon (t))
 + (V u_\varepsilon (t),u_\varepsilon (t))
 + (N (u_\varepsilon (t)),u_\varepsilon (t))
 = (f (t),u_\varepsilon (t))
$$
for almost all $t \in I$.
Using the equality
$
   \partial_t \| u_\varepsilon (t) \|_H^2 = 2 \Re\, (u_\varepsilon' (t),u_\varepsilon (t)),
$
which is due to (\ref{eq.efN(u)iS}), and
$
   (N (u_\varepsilon (t)),u_\varepsilon (t)) = 0
$
we get
\begin{eqnarray*}
   \partial_t \| u_\varepsilon (t) \|_H^2
 + 2\, (V u_\varepsilon (t),u_\varepsilon (t))
 & = &
   2 \Re\, (f (t),u_\varepsilon (t))
\\
 & \leq &
   2\, \| f (t) \|_H \| u_\varepsilon (t) \|_H
\\
 & \leq &
   c\, \| f (t) \|_H^2 + \frac{1}{c}\,  \| u_\varepsilon (t) \|_H^2
\end{eqnarray*}
a.e. on $I$, where $c$ is an arbitrary positive constant.
On the other hand, one easily checks that
$$
   \partial_t \| u_\varepsilon (t) \|_H^2 - \frac{1}{c}\,  \| u_\varepsilon (t) \|_H^2
 = e^{(1/c) t} \partial_t \big( e^{- (1/c) t} \| u_\varepsilon (t) \|_H^2 \big)
$$
whence
$$
   \partial_t \big( e^{- (1/c) t} \| u_\varepsilon (t) \|_H^2 \big)
 + 2 e^{- (1/c) t}\, (V u_\varepsilon (t),u_\varepsilon (t))
 \leq
   c e^{- (1/c) t}\, \| f (t) \|_H^2
$$
a.e. on the interval $I$.
Integrating this inequality from $0$ to $t$, where $t$ is a fixed point of $I$, we
obtain
\begin{eqnarray*}
\lefteqn{
   \| u_\varepsilon (t) \|_H^2
 + 2 \int_0^t e^{(1/c) (t-t')}\, (V u_\varepsilon (t'),u_\varepsilon (t')) dt'
}
\\
 & \leq &
   e^{(1/c) t}\, \| u_0 \|_H^2
 + c\, \int_0^t e^{(1/c) (t-t')}\, \| f (t') \|_H^2 dt'.
   \hspace{1cm}
\end{eqnarray*}
As
$$
\begin{array}{rcl}
   (V u_\varepsilon (t'),u_\varepsilon (t'))
 & \geq &
   (\nu\, \iD u_\varepsilon (t'),u_\varepsilon (t'))
\\
 & \geq &
    \nu \big( \| A u_{\varepsilon} (t') \|_H^2 + \| A^\ast u_{\varepsilon} (t') \|_H^2 \big),
\end{array}
$$
the lemma follows.
\end{proof}

Perhaps the choice $c = 1$ is optimal.

\section{A uniqueness theorem for the conventional equations}
\label{s.uniqthe}

We now consider the unperturbed equations corresponding to $\varepsilon = 0$, which are
the classical Navier-Stokes equations.
Recall that $S^1$ stands for the closure of $\mathcal{S}$ in $H^1 (\cX,F^i)$.
Given
   $f \in L^2 (I, S^1{}')$ and
   $u_0 \in H$,
we look for $u \in L^\infty (I,H) \cap L^2 (I,S^1)$ satisfying
\begin{equation}
\label{eq.conveNS}
\begin{array}{rclcl}
   (u',v) + \nu (\iD u,v) + (N (u),v)
 & =
 & (f,v)
 & \mbox{for all}
 & v \in S^1 \cap L^n (\cX,F^i),
\\
   u (0)
 & =
 & u_0
 & \mbox{on}
 & \cX.
 \end{array}
\end{equation}

Note that the condition $u (0) = u_0$ is interpreted in the same way as in problem
(\ref{eq.NSweakf}).
The additional condition $v \in S^1 \cap L^n (\cX,F^i)$ is used to make the trilinear
form $(N (u,v),w)$ continuous on $S^1 \times S^1 \times (S^1 \cap L^n (\cX,F^i))$,
   cf. Lemma 6.1 of \cite[p.~79]{Lion69}.

By the above, there is at least one solution $u \in L^\infty (I,H) \cap L^2 (I,S^1)$
to problem (\ref{eq.conveNS}).
The question if this solution is unique remains still open.
One may ask what additional conditions on $u$ guarantee its uniqueness.
The following theorem is due to \cite{Lady70}.

\begin{theorem}
\label{t.uniqcla}
The solution of (\ref{eq.conveNS}) in
   $L^r (I,L^q (\cX,F^i))$,
if exists, is unique, provided that $q > n$ and
$$
   \frac{2}{r} + \frac{n}{q} \leq 1.
$$
\end{theorem}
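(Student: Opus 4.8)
The plan is to run the classical energy argument of Serrin and Ladyzhenskaya. Suppose $u_{1}$ is an arbitrary weak solution of (\ref{eq.conveNS}) — which may in particular be assumed to satisfy the energy inequality (\ref{eq.enerest}) with $\varepsilon = 0$ — and $u_{2}$ is a solution lying additionally in $L^{r}(I,L^{q}(\cX,F^{i}))$ with $q > n$ and $2/r + n/q \leq 1$. Put $w = u_{1} - u_{2}$, so that $w(0) = 0$. Subtracting the weak equation for $u_{2}$ from that for $u_{1}$, using the bilinearity identity $N(u_{1}) - N(u_{2}) = N(u_{1},w) + N(w,u_{2})$ together with the cancellation $(N(u_{1},w),w) = 0$ — valid by Corollary \ref{c.algestr} since $u_{1}$ is solenoidal with vanishing normal data — and then the antisymmetry $(N(w,u_{2}),w) = -(N(w),u_{2})$ of the same corollary, one is led to the energy relation
\[
   \| w(t) \|_{H}^{2} + 2 \nu \int_{0}^{t} \| Aw (t') \|_{H}^{2}\, dt'
 \leq 2\, \Re \int_{0}^{t} (N (w (t')), u_{2} (t'))\, dt'
\]
for a.a. $t \in I$.

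Establishing this inequality is the main obstacle. Since $u_{1}$ is only a Leray--Hopf type weak solution, its time derivative need not lie in $L^{2}(I,S^{1}{}')$, so Lemma \ref{l.interLM} cannot be applied directly to $\| w \|_{H}^{2}$. Instead one combines the energy \emph{equality} for $u_{2}$ (which holds because $u_{2} \in L^{r}(I,L^{q})$ is regular enough to be an admissible test function and to make $N(u_{2})$ pair integrably with the right spaces), the energy \emph{inequality} (\ref{eq.enerest}) for $u_{1}$, and the cross-term identity obtained by testing each weak equation against the other solution. The cross terms $\int_{0}^{t}(N(u_{1}),u_{2})$ and $\int_{0}^{t}(N(u_{2}),u_{1})$ are finite because, after moving $u_{2}$ into the $L^{q}$-slot by the antisymmetry of Corollary \ref{c.algestr}, each is controlled by a multiple of $\| u_{1} \|_{H^{1}(\cX,F^{i})}^{1 + n/q} \| u_{1} \|_{H}^{1 - n/q} \| u_{2} \|_{L^{q}(\cX,F^{i})}$, whose time-integrability is guaranteed precisely by $2/r + n/q \leq 1$. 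One would record the details using a mollification of $u_{1}$ in time, exactly as in the proof of Lemma \ref{l.tetscut}.

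It then remains to bound the right-hand side. Writing $(N(w),u_{2}) = (N(w,w),u_{2})$ and applying H\"{o}lder's inequality with exponents $1/2,\ 1/2 - 1/q,\ 1/q$ followed by the Gagliardo--Nirenberg interpolation inequality $\| w \|_{L^{2q/(q-2)}} \leq c\, \| w \|_{H}^{1 - n/q}\, \| w \|_{H^{1}(\cX,F^{i})}^{n/q}$ on the compact manifold $\cX$ (compare (\ref{eq.tricont}) and (\ref{eq.interpo})), one gets
\[
   | (N (w), u_{2}) |
 \leq c\, \| u_{2} \|_{L^{q} (\cX,F^{i})}\, \| w \|_{H}^{1 - n/q}\, \| w \|_{H^{1} (\cX,F^{i})}^{1 + n/q} .
\]
Since $q > n$ the exponent $1 + n/q$ is strictly less than $2$, so Young's inequality with conjugate exponents $2/(1+n/q)$ and $2q/(q-n)$, together with $\| w \|_{H^{1}(\cX,F^{i})}^{2} \leq c ( \| Aw \|_{H}^{2} + \| w \|_{H}^{2})$ (recall $A^{\ast} w = 0$), gives
\[
   | (N (w), u_{2}) |
 \leq \nu\, \| Aw \|_{H}^{2}
    + c \bigl( 1 + \| u_{2} \|_{L^{q} (\cX,F^{i})}^{2q/(q-n)} \bigr) \| w \|_{H}^{2} .
\]
Substituting into the energy relation and absorbing $\nu \| Aw \|_{H}^{2}$ on the left, I obtain $\| w(t) \|_{H}^{2} \leq \int_{0}^{t} a(t') \| w(t') \|_{H}^{2}\, dt'$ with $a(t') = c\,(1 + \| u_{2}(t') \|_{L^{q}(\cX,F^{i})}^{2q/(q-n)})$. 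The hypothesis $2/r + n/q \leq 1$ forces $r \geq 2q/(q-n)$, so on the finite interval $I$ the function $a$ is nonnegative and integrable. Gronwall's inequality (Lemma \ref{l.Gronwall} with $b \equiv 0$) then yields $\| w(t) \|_{H}^{2} \leq 0$ for all $t \in I$, i.e. $u_{1} = u_{2}$, which is the desired uniqueness. The remaining points — the precise mollification argument in the second step and the H\"{o}lder/interpolation bookkeeping — are routine.
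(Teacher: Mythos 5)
Your proposal aims at a strictly stronger statement than the one the paper actually proves, and the extra strength is exactly where the gap sits. The paper reads the theorem as uniqueness \emph{within} the class $L^r (I,L^q (\cX,F^i))$: it takes \emph{both} competitors $u_1, u_2$ in that class, first shows (via (\ref{eq.utHiwgc})) that any such solution has $u' \in L^2 (I,S^1{}')$, and only then forms the difference $u = u_1 - u_2$, which is now an admissible test function so that Lemma \ref{l.interLM} gives the energy identity $\partial_t \| u \|_H^2 = 2 \Re\, (u',u)$ rigorously. You instead let $u_1$ be an arbitrary weak solution and attempt Serrin--Prodi weak--strong uniqueness. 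That forces you to (i) add the hypothesis that $u_1$ satisfies the energy inequality (\ref{eq.enerest}) with $\varepsilon = 0$, which is not among the theorem's assumptions and is not known for every solution of (\ref{eq.conveNS}), and (ii) justify the cross-term identities $\int_0^t (u_1', u_2)$, $\int_0^t (u_2', u_1)$ and the energy equality for $u_2$. Point (ii) is the genuinely hard part of the weak--strong argument, and your appeal to ``mollification exactly as in the proof of Lemma \ref{l.tetscut}'' does not carry it: that lemma's mollification uses $u' \in L^2 (I,H)$ and $u \in L^2 (I,\cD_V)$, whereas here $u_1'$ lies only in $L^1 (I, S^1{}')$ (or similar), so the convergence of the mollified cross terms requires a separate, delicate duality argument in which the Serrin condition on $u_2$ must be invoked again. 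Declaring this ``routine'' is an overstatement; it is the content of the classical Serrin/Prodi theorem and is precisely what the paper's proof is structured to avoid.

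The second half of your argument --- the H\"older bound with exponents $1/2$, $1/2 - 1/q$, $1/q$, the interpolation $\| w \|_{L^{2q/(q-2)}} \leq c\, \| w \|_H^{1-n/q} \| w \|_{H^1}^{n/q}$, Young's inequality with conjugate exponents $2/(1+n/q)$ and $2q/(q-n)$, the integrability of $t \mapsto \| u_2 (t) \|_{L^q}^{2q/(q-n)}$ from $r \geq 2q/(q-n)$, and the Gronwall step --- matches the paper's computation essentially line for line (the paper writes it for the borderline case $2/r + n/q = 1$ and with the ellipticity of $\iD + \Id$ in place of your absorption of $\| Aw \|_H^2$). So the quantitative core is fine. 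To produce a complete proof of the theorem as the paper intends it, simply assume from the outset that both $u_1$ and $u_2$ lie in $L^r (I,L^q (\cX,F^i))$; then $u' \in L^2 (I,S^1{}')$, the difference may be used as a test function, and your energy relation follows from Lemma \ref{l.interLM} without any mollification or energy-inequality hypothesis.
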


\begin{proof}
We consider the most interesting case where
   $2/r + n/q = 1$.
Using the H\"{o}lder inequality, we get
\begin{equation}
\label{eq.utHiqwg}
   | (N (u,v),w) |
 \leq
   c\, \| u \|_{L^q (\cX,F^i)} \| v \|_{H^1 (\cX,F^i)} \| w \|_{L^s (\cX,F^i)}
\end{equation}
where $1/q + 1/s = 1/2$.
On the other hand, if $\phi$ is a function with scalar values, then
$$
   \| \phi \|_{L^s (\cX)}
 \leq
   \| \phi \|_{L^2 (\cX)}^{2/r} \| \phi \|_{L^{2n/(n-2)} (\cX)}^{n/q},
$$
for
$$
   \frac{1}{s} = \frac{2/r}{2} + \frac{n/q}{2n/(n-2)}.
$$
We have used here the inequality
$
   \| \phi \|_{L^s (\cX)}
 \leq
   \| \phi \|_{L^{s_1 p} (\cX)}^{s_1/s} \| \phi \|_{L^{s_2 p'} (\cX)}^{s_2/s},
$
where $p'$ is the dual exponent for $p$ and $s_1 + s_2 = s$.
Since $H^1_0 (\cX,F^i) \hookrightarrow L^{2n/(n-1)} (\cX,F^i)$, it follows that
$$
   \| w \|_{L^s (\cX,F^i)}
 \leq
   c\, \| w \|_{L^2 (\cX,F^i)}^{2/r} \| w \|_{H^1 (\cX,F^i)}^{n/q}
$$
with $c$ a constant independent of $w$ and different in diverse applications.
Hence, (\ref{eq.utHiqwg}) implies that
\begin{equation}
\label{eq.utHiwgc}
   | (N (u,v),w) |
 \leq
   c\,
   \| u \|_{L^q (\cX,F^i)}
   \| v \|_{H^1 (\cX,F^i)}
   \| w \|_{H}^{2/r}
   \| w \|_{H^1 (\cX,F^i)}^{n/q}.
\end{equation}

Our next objective is to show that if $u$ is a solution of problem (\ref{eq.NSweakf}),
satisfying $u \in L^r (I,L^q (\cX,F^i))$ with $q > n$ and $2/r + n/q \leq 1$, then
   $u' \in L^2 (I,S^1{}')$.
Indeed, from (\ref{eq.utHiwgc}) it follows that
\begin{eqnarray*}
   | (N (u),v) |
 & = &
   | - (N (u,v),u) |
\\
 & \leq &
   c\,
   \| u \|_{L^q (\cX,F^i)}
   \| v \|_{H^1 (\cX,F^i)}
   \| u \|_{H}^{2/r}
   \| u \|_{H^1 (\cX,F^i)}^{n/q}
\\
 & \leq &
   c\,
   \| u \|_{L^q (\cX,F^i)}
   \| v \|_{H^1 (\cX,F^i)}
   \| u \|_{H^1 (\cX,F^i)}^{n/q},
\end{eqnarray*}
for the norm $\| u \|_H$ is bounded uniformly in $u$.
Since
   the function $t \mapsto \| u (t) \|_{L^q (\cX,F^i)}$ belongs to $L^r (I)$ and
   the function $t \mapsto \| u (t) \|_{H^1 (\cX,F^i)}^{n/q}$ belongs to $L^{2q/n} (I)$,
we see that their product
$$
   t \mapsto \| u (t) \|_{L^q (\cX,F^i)} \| u (t) \|_{H^1 (\cX,F^i)}^{n/q}
$$
belongs to $L^2 (I)$.
By the Riesz representation theorem, the section $t \in N (u (t))$ is of class
$L^2 (I,S^1{}')$.
Hence it follows that $u' = f - \nu \iD u - N (u)$ belongs to $L^2 (I,S^1{}')$,
as desired.

In particular, on applying Lemma \ref{l.interLM} we conclude that the mapping
$u : I \to H$ is continuous.

Having disposed of this preliminary step, we can now establish the uniqueness of the
solution $u$.
Suppose that $u_1$ and $u_2$ be two solutions of problem (\ref{eq.NSweakf}) of class
   $L^r (I,L^q (\cX,F^i))$,
where
   $q > n$ and
   $2/r + n/q \leq 1$.
Write $u = u_1 - u_2$.
Then we obtain
$$
   (u',v) + \nu\, (\iD u,v) + (N (u,u_1),v) + (N (u_1,u),v) - (N (u),v) = 0
$$
for all $v \in S^1 \cap L^n (\cX,F^i)$.
On substituting $u$ for the test section $v$ (which is allowed, for $u' \in L^2 (I,S^1{}')$)
we get
\begin{eqnarray*}
   \frac{1}{2} \partial_t \| u (t) \|_H^2 + \nu\, (\iD u (t),u (t))
 & = &
   -\, \Re\, (N (u (t),u_1 (t)),u (t))
\\
 & = &
   \Re\, (N (u (t)),u_1 (t))
\end{eqnarray*}
for almost all $t \in I$.

Using a version of inequality (\ref{eq.utHiwgc}) yields
$$
   | (N (u (t)),u_1 (t)) |
 \leq
   c\,
   \| u_1 (t) \|_{L^q (\cX,F^i)}
   \| u (t) \|_{H}^{2/r}
   \| u (t) \|_{H^1 (\cX,F^i)}^{n/q + 1}
$$
where $c$ is a constant independent of both $u_1$, $u_2$ and $t$.
Set
$$
   a (t) = \| u_1 (t) \|_{L^q (\cX,F^i)}^r,
$$
then $a \in L^1 (I)$ and
\begin{eqnarray*}
   | (N (u (t)),u_1 (t)) |
 & \leq &
   c\,
   (a (t))^{1/r}
   \| u (t) \|_{H}^{2/r}
   \| u (t) \|_{H^1 (\cX,F^i)}^{n/q + 1}
\\
 & \leq &
   \Big( c C (a (t))^{1/r} \| u (t) \|_{H}^{2/r} \Big)
   \Big( \frac{1}{C}\, \| u (t) \|_{H^1 (\cX,F^i)}^{n/q + 1} \Big),
\end{eqnarray*}
where $C$ is an arbitrary positive constant to be determined.
By Young's inequality for the product we obtain
$$
   | (N (u (t)),u_1 (t)) |
 \leq
   \frac{(c C)^r}{r}\, a (t) \| u (t) \|_{H}^2
 + \frac{1}{r' C^{r'}}\, \| u (t) \|_{H^1 (\cX,F^i)}^2
$$
where $r' = r/(r-1)$ is the dual exponent for $r$.
We have used here the inequality $2/r + n/q \leq 1$.

On summarising we deduce that
\begin{eqnarray*}
\lefteqn{
   \frac{1}{2} \partial_t \| u (t) \|_H^2
 + \nu\, ((\iD + \Id) u (t),u (t))
 - \frac{1}{r' C^{r'}}\, \| u (t) \|_{H^1 (\cX,F^i)}^2
}
\\
 & \leq &
   \Big( \frac{(c C)^r}{r}\, a (t) + \nu \Big) \| u (t) \|_H^2
   \hspace{3cm}
\end{eqnarray*}
for almost all $t \in I$.
Since the Dirichlet problem for the differential operator $\iD + \Id$ is elliptic,
there is a sufficiently large constant $C > 0$ independent of $u$ and $t \in I$,
such that
$$
   \nu\, ((\iD + \Id) u (t),u (t))
 \geq
   \frac{1}{r' C^{r'}}\, \| u (t) \|_{H^1 (\cX,F^i)}^2.
$$
It follows that
$$
   \frac{1}{2} \partial_t \| u (t) \|_H^2
 \leq
   \Big( \frac{(c C)^r}{r}\, a (t) + \nu \Big) \| u (t) \|_H^2
$$
for alsmost all $t \in I$.
Since $u (0) = 0$, on arguing as in the proof of Theorem \ref{t.uniqrNS} we see that
$u \equiv 0$, as desired.
\end{proof}

\section{Limit to the conventional Navier-Stokes equations}
\label{s.lttcNSe}

In this section we establish that the weak solution
   $u_\varepsilon \in C (\overline{I},H) \cap L^2 (I,S)$
to the regularised Navier-Stokes equations has a strong limit to a weak solution to the
Navier-Stokes equations.
We will consider a limit passage for the weak solution of problem (\ref{eq.NSweakf}) whose
existence and uniqueness have been proved in Theorems \ref{t.exnonNS} and \ref{t.uniqrNS}.

By Theorem \ref{t.regtfNS}, if
   $f \in L^2 (I,H)$ and
   $u_0 \in S$,
then the unique solution $u_\varepsilon$ of problem (\ref{eq.NSweakf}) belongs to the space
   $u_\varepsilon \in L^2 (I,\cD_V)$.
Since $m \geq (n+2)/4$ is greater than $n/4$, we conclude by the Sobolev embedding theorem
that $\cD_V \hookrightarrow C (\cX,F^i)$.
Therefore, Theorem \ref{t.uniqcla} implies that the family $u_\varepsilon$ belongs to
a uniqueness class for problem (\ref{eq.conveNS}).
However, the family need not be bounded in this class uniformly in $\varepsilon$.

On the other hand, Lemma \ref{l.aeuniie} shows that the family $u_\varepsilon$ is bounded
uniformly in $\varepsilon$ both in
   $L^\infty (I,L^2 (\cX,F^i))$ and
   $L^2 (I,H^1 (\cX,F^i))$.
By Theorem \ref{t.uniqcla}, the former space is a uniqueness class for the conventional
problem, if $n \leq 2$, while the second one is such merely for $n < 2$.
More generally, on combining Theorem \ref{t.uniqcla} with the Sobolev embedding theorem we
see that $L^2 (I,H^m (\cX,F^i))$ is a uniqueness class for problem (\ref{eq.conveNS}),
provided that $m > n/2$.
Although the latter condition is stronger than $m \geq (n+2)/4$, the authors find the
boundedness of the family $u_\varepsilon$ in $L^2 (I,H^m (\cX,F^i))$ an optimal condition
for the singular perturbation under consideration.

\begin{theorem}
\label{t.lttcNSe}
Suppose $m > n/2$.
Let $u_\varepsilon$ be the weak solution of problem (\ref{eq.NSweakf}) given by
Theorem \ref{t.regtfNS} with 
   $f \in L^2 (I,H)$ and
   $u_0 \in S$.
If the family $u_\varepsilon$ is bounded uniformly in $\varepsilon$ in $L^2 (I,H^m (\cX,F^i))$,
then it approaches the unique weak solution $u$ to the conventional problem of (\ref{eq.conveNS})
   in the norm of $L^\infty (I,H) \cap L^2 (I,S^1)$ and
   weakly in $L^2 (I, S)$.
\end{theorem}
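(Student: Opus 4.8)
The plan is to proceed in three steps: first use the uniform a priori bounds together with a compactness theorem to extract a limit, then identify this limit as a weak solution of the conventional equations and invoke uniqueness to see that the whole family converges to it, and finally run an energy estimate for the difference to upgrade the weak convergence to strong convergence. For the first step, Lemma~\ref{l.aeuniie} already gives that $u_\varepsilon$ is bounded, uniformly in $\varepsilon$, in $L^\infty (I,H) \cap L^2 (I,S^1)$, and by hypothesis it is also bounded in $L^2 (I,H^m (\cX,F^i))$, hence in $L^2 (I,S)$ since $S$ is closed in $H^m (\cX,F^i)$. Writing $u_\varepsilon' = f - \varepsilon \iD^m u_\varepsilon - \nu \iD u_\varepsilon - N (u_\varepsilon)$ and estimating termwise (using $f \in L^2 (I,H)$, the boundedness of $\iD : H^1 (\cX,F^i) \to H^{-1} (\cX,F^i)$, the bound $\| N (u_\varepsilon) \|_{S^1{}'} \leq c\, \| u_\varepsilon \|_{L^\infty (\cX,F^i)} \| u_\varepsilon \|_H$ which follows from Corollary~\ref{c.algestr} and the embedding $H^m (\cX,F^i) \hookrightarrow L^\infty (\cX,F^i)$ valid for $m > n/2$, and $\varepsilon \| \iD^m u_\varepsilon \|_{H^{-m} (\cX,F^i)} \leq \varepsilon\, c\, \| u_\varepsilon \|_{H^m (\cX,F^i)}$), one sees that $u_\varepsilon'$ is bounded in $L^2 (I,S')$ uniformly in $\varepsilon$ and that $\varepsilon \iD^m u_\varepsilon \to 0$ in $L^2 (I,S')$. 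Passing to a subsequence $\varepsilon \to 0$ we get $u$ with $u_\varepsilon \to u$ weakly$^{\ast}$ in $L^\infty (I,H)$, weakly in $L^2 (I,S^1)$ and in $L^2 (I,S)$, and $u_\varepsilon' \to u'$ weakly in $L^2 (I,S')$; since $S^1 \hookrightarrow H$ is compact and $H \hookrightarrow S'$ is continuous, Theorem~\ref{t.tptteic} with $\mathcal{B}_0 = S^1$, $\mathcal{B} = H$, $\mathcal{B}_1 = S'$ and $p_0 = p_1 = 2$ then yields $u_\varepsilon \to u$ strongly in $L^2 (I,H)$.

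Next I would identify $u$ as a solution of (\ref{eq.conveNS}). Testing the equation for $u_\varepsilon$ against $v \in \mathcal{S}$ and a scalar $\phi \in C^1 [0,T]$ with $\phi (T) = 0$, integrating over $\cC$ and integrating by parts in $t$, one lets $\varepsilon \to 0$: the linear terms converge by the weak convergences above, the term $\varepsilon \int_0^T (u_\varepsilon, \iD^m v)\, \phi\, dt$ tends to $0$ because $u_\varepsilon$ is bounded in $L^2 (\cC,F^i)$, and the nonlinear term is handled by Lemma~\ref{l.ptlfnlt}, which applies precisely because $u_\varepsilon \to u$ weakly in $L^2 (I,S)$ and strongly in $L^2 (I,H)$. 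A standard density and continuity argument then gives the variational identity of (\ref{eq.conveNS}) for every test section in $S^1 \cap L^n (\cX,F^i)$; the identity $u' = f - \nu \iD u - N (u)$ together with the conventional nonlinearity estimate gives $u' \in L^2 (I,S^1{}')$, so Lemma~\ref{l.interLM} makes $u$ continuous into $H$ and the relation $u (0) = u_0$ meaningful. Since $u \in L^2 (I,H^m (\cX,F^i)) \hookrightarrow L^2 (I,L^\infty (\cX,F^i))$, the limit lies in a uniqueness class for (\ref{eq.conveNS}) (cf. Theorem~\ref{t.uniqcla} and the discussion preceding the present theorem), hence every subsequential limit of $u_\varepsilon$ coincides with this $u$ and the whole family converges to $u$ in all the weak senses recorded above, in particular weakly in $L^2 (I,S)$.

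For the strong convergence, set $w_\varepsilon = u_\varepsilon - u$. Then $w_\varepsilon \in L^2 (I,S^1)$, $w_\varepsilon' \in L^2 (I,S^1{}')$ (using $u_\varepsilon' \in L^2 (I,H)$ from Theorem~\ref{t.regtfNS}), $w_\varepsilon (0) = 0$, and $w_\varepsilon' + \varepsilon \iD^m u_\varepsilon + \nu \iD w_\varepsilon + ( N (u_\varepsilon) - N (u) ) = 0$ a.e. on $I$. Pairing with $w_\varepsilon (t)$, using Lemma~\ref{l.interLM} for the first term, expanding $N (u_\varepsilon) - N (u) = N (u, w_\varepsilon) + N (w_\varepsilon, u) + N (w_\varepsilon)$, and applying Corollary~\ref{c.algestr} (so that $(N (u, w_\varepsilon), w_\varepsilon) = (N (w_\varepsilon), w_\varepsilon) = 0$ and $(N (w_\varepsilon, u), w_\varepsilon) = - (N (w_\varepsilon), u)$), then integrating over $(0,t)$, one arrives at
$$
   \frac{1}{2} \| w_\varepsilon (t) \|_H^2
 + \nu \int_0^t \big( (\iD + \Id) w_\varepsilon, w_\varepsilon \big)\, ds
 = \nu \int_0^t \| w_\varepsilon \|_H^2\, ds
 - \varepsilon \int_0^t \Re\, (\iD^m u_\varepsilon, w_\varepsilon)\, ds
 + \int_0^t \Re\, (N (w_\varepsilon), u)\, ds .
$$
Here $\nu ( (\iD + \Id) w_\varepsilon, w_\varepsilon ) \geq c_0\, \| w_\varepsilon \|_{S^1}^2$ by ellipticity; the perturbation term is $O (\varepsilon)$ by the Schwarz inequality in $t$ and the uniform bounds on $u_\varepsilon$ and $u$ in $L^2 (I,H^m (\cX,F^i))$; and $| (N (w_\varepsilon), u) | \leq c\, \| w_\varepsilon \|_{S^1} \| w_\varepsilon \|_H \| u \|_{L^\infty (\cX,F^i)}$ is absorbed by Young's inequality into the coercive form at the cost of a term $c'\, \| u \|_{L^\infty (\cX,F^i)}^2 \| w_\varepsilon \|_H^2$. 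With $y (t) = \| w_\varepsilon (t) \|_H^2$ this gives $y (t) \leq C \varepsilon + \int_0^t g (s)\, y (s)\, ds$, where $g (s) = c\, ( \nu + \| u (s) \|_{L^\infty (\cX,F^i)}^2 ) \in L^1 (I)$ because $u \in L^2 (I,L^\infty (\cX,F^i))$, and Gronwall's inequality (Lemma~\ref{l.Gronwall}) yields $\sup_{t \in I} \| w_\varepsilon (t) \|_H^2 \leq C \varepsilon\, \exp ( \| g \|_{L^1 (I)} ) \to 0$. Feeding this back into the energy identity forces $\int_0^T \| w_\varepsilon (t) \|_{S^1}^2\, dt \to 0$ as well, which is the asserted convergence.

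The main obstacle is the last step, and more precisely the control of the regularising contribution $\varepsilon \iD^m u_\varepsilon$ in the energy identity: this is exactly where the hypothesis of uniform boundedness in $L^2 (I,H^m (\cX,F^i))$ is indispensable, since without it the factor $\varepsilon$ multiplies a quantity that a priori is only $O (\varepsilon^{-1/2})$. Once that term is tamed, the rest is a routine Gronwall argument resting on $w_\varepsilon (0) = 0$, on the antisymmetry of $N$ from Corollary~\ref{c.algestr}, and on $u \in L^2 (I,L^\infty (\cX,F^i))$.
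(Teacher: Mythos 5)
Your argument is correct, but it is organised quite differently from the paper's own proof, and it is worth recording the difference. The paper does not pass through a compactness/identification step at all: it compares two regularised solutions $u_\varepsilon$ and $u_\delta$ directly, derives the energy identity (\ref{eq.coteieq}) for $u_\varepsilon - u_\delta$, and uses the antisymmetry of $N$, the estimate $|(N(a,b),c)| \leq c\, \|a\|_{L^2}\|b\|_{H^1}\|c\|_{L^\infty}$, the Sobolev embedding $H^m \hookrightarrow L^\infty$ and Gronwall to obtain the Cauchy-type bound
$\sup_{t}\|u_\varepsilon(t)-u_\delta(t)\|_H^2 + \nu \int_0^T \|\iD^{1/2}(u_\varepsilon-u_\delta)\|_H^2\,dt \leq C\,|\varepsilon-\delta|$.
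Completeness of $L^\infty(I,H)\cap L^2(I,S^1)$ then produces the strong limit $u$ without any prior knowledge of what $u$ is, and only afterwards is $u$ identified as a solution of (\ref{eq.conveNS}) by passing to the limit in the weak formulation (using the Appendix lemma / Lemma \ref{l.ptlfnlt}). You instead first extract a limit point by Aubin--Lions compactness (Theorem \ref{t.tptteic}), identify it as a conventional weak solution lying in the uniqueness class $L^2(I,L^\infty(\cX,F^i))$ of Theorem \ref{t.uniqcla}, and only then run the energy estimate for $w_\varepsilon = u_\varepsilon - u$ against the identified limit. Both routes rest on exactly the same core estimates (antisymmetry of $N$, the $q=2$, $s=\infty$ case of (\ref{eq.utHiqwg}), ellipticity of $\iD+\Id$, the $L^2(I,H^m)$ hypothesis to tame $\varepsilon\iD^m u_\varepsilon$, Gronwall), and both yield the rate $O(\varepsilon)$. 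What the paper's Cauchy argument buys is economy: it needs neither the compactness theorem nor the a priori verification that the limit lies in a uniqueness class, since convergence of the whole family is automatic from completeness. What your version buys is that the final estimate is stated directly against the limit solution, and in fact it shows a little more than you claim: since the Gronwall step only uses that $u$ is \emph{some} conventional weak solution in $L^2(I,H^m(\cX,F^i))$ with initial value $u_0$, it reproves uniqueness in that class as a by-product. One small point to be explicit about if you write this up: the weak limit $u$ of $u_\varepsilon$ in $L^2(I,H^m(\cX,F^i))$ actually lies in $L^2(I,S)$ because $S$ is a closed (hence weakly closed) subspace, which is what legitimises the integration by parts $(\iD^m u_\varepsilon, w_\varepsilon) = (\iD^{m/2}u_\varepsilon, \iD^{m/2}w_\varepsilon)$ on a manifold with boundary.
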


\begin{proof}
We first estimate the difference $u_\varepsilon - u_\delta$ for any positive $\delta$ and
$\varepsilon$.
To be specific, consider $0 < \delta < \varepsilon$.

By Theorem \ref{t.regtfNS}, the difference $u_\varepsilon - u_\delta$ belongs to
   $C (\overline{I},H) \cap L^2 (I,\cD_V)$
and vanishes at $t = 0$.
Moreover, on subtracting equality (\ref{eq.NSpowif}) for $u_\delta$ from that for $u_\varepsilon$
we obtain
$$
   (u_\varepsilon - u_\delta)'
 + \left( \varepsilon \iD^m u_{\varepsilon} - \delta \iD^m u_{\delta} \right)
 + \nu \iD (u_\varepsilon - u_\delta)
 + \left( N (u_{\varepsilon}) - N (u_{\delta}) \right)
 =
   0
$$
a.e. on $I$.

In order to find appropriate estimates for the difference $u_\varepsilon - u_\delta$ we take
the duality pairing of the latter equality with the special test section
   $v (t) = u_\varepsilon (t) - u_\delta (t)$,
and integrate it from $0$ to $t$ for arbitrary fixed $t \in I$.
A trivial verification leads to the equality
\begin{eqnarray*}
\lefteqn{
   \int_0^t ((u_\varepsilon - u_\delta)', v) dt'
 + \varepsilon \int_0^t (\iD^m (u_\varepsilon - u_\delta), v) dt'
 + \nu \int_0^t (\iD (u_\varepsilon - u_\delta), v) dt'
}
\\
 & = &
 -\,
   (\varepsilon - \delta) \int_0^t (\iD^m u_\delta, v) dt'
 - \int_0^t (N (u_\varepsilon) - N (u_\delta), v) dt'
   \hspace{2cm}
\end{eqnarray*}
which is valid for all $t \in I$.
Since
\begin{eqnarray*}
   \Re\, ((u_\varepsilon - u_\delta)', u_\varepsilon - u_\delta)
 & = &
   \frac{1}{2} \frac{\partial}{\partial t'}\, \| u_\varepsilon - u_\delta \|_H^2,
\\
   (\iD^m (u_\varepsilon - u_\delta), u_\varepsilon - u_\delta)
 & = &
   \| \iD^{m/2} (u_\varepsilon - u_\delta) \|_H^2,
\\
   (\iD^m u_\delta, u_\varepsilon - u_\delta)
 & = &
   (\iD^{m/2} u_\delta, \iD^{m/2} (u_\varepsilon - u_\delta))
\end{eqnarray*}
and
\begin{eqnarray*}
\lefteqn{
   \left( N (u_{\varepsilon}) - N (u_{\delta}), u_{\varepsilon} - u_{\delta} \right)
}
\\
 & = &
   \left( N (u_{\varepsilon} - u_{\delta}, u_{\varepsilon})
        + N (u_{\delta}, u_{\varepsilon})
        - N (u_{\delta}, u_{\delta} - u_{\varepsilon})
        - N (u_{\delta}, u_{\varepsilon}),
          u_{\varepsilon} - u_{\delta}
   \right)
\\
 & = &
   \left( N (u_{\varepsilon} - u_{\delta}, u_{\varepsilon})
        + N (u_{\delta}, u_{\varepsilon} - u_{\delta}), u_{\varepsilon} - u_{\delta}
   \right)
\\
 & = &
 -\,
   \left( N (u_{\varepsilon} - u_{\delta}), u_{\varepsilon} \right)
\end{eqnarray*}
a.e. on $I$,
   the last equality being a consequence of Corollary \ref{c.algestr},
we arrive at the equality
\begin{eqnarray}
\label{eq.coteieq}
\lefteqn{
   \frac{1}{2}\, \| u_\varepsilon (t) - u_\delta (t) \|_H^2
 + \varepsilon \int_0^t \| \iD^{m/2} (u_\varepsilon - u_\delta) \|_H^2 dt'
 + \nu \int_0^t \| \iD^{1/2} (u_\varepsilon - u_\delta) \|_H^2 dt'
}
\nonumber
\\
 \!\! & \!\! = \!\! & \!\!
 -\,
   (\varepsilon - \delta)\, \Re \int_0^t (\iD^{m/2} u_\delta, \iD^{m/2} (u_\varepsilon - u_\delta)) dt'
 + \Re \int_0^t \left( N (u_{\varepsilon} - u_{\delta}), u_{\varepsilon} \right) dt'
\nonumber
\\
\end{eqnarray}
for almost all $t \in I$.
Here, we write
$
   \| \iD^{1/2} v \|_H^2
 = \| Av \|_H^2 + \| A^\ast v \|_H^2
$
to shorten notation.

Using the inequality
$
   \displaystyle
   a b \leq \frac{c}{2} a^2 + \frac{1}{2c} b^2
$
with
$
   \displaystyle
   c = \frac{1}{2}
$
we estimate the first term on the right-hand side of (\ref{eq.coteieq}) by
\begin{eqnarray*}
\lefteqn{
   \Big|
 -\,
   (\varepsilon - \delta)\, \Re \int_0^t (\iD^{m/2} u_\delta, \iD^{m/2} (u_\varepsilon - u_\delta)) dt'
   \Big|
}
\\
 & \leq &
   \frac{\varepsilon-\delta}{4}
   \int_0^t \| \iD^{m/2} u_\delta \|_H^2 dt'
 + (\varepsilon-\delta)
   \int_0^t \| \iD^{m/2} (u_\varepsilon - u_\delta) \|_H^2 dt'
\end{eqnarray*}
for all $t \in I$.

In order to estimate the second term on the right-hand side of (\ref{eq.coteieq}), we apply
inequality (\ref{eq.utHiqwg}) with $q = 2$ and $s = \infty$.
Namely, there is a constant $c$ independent of $\delta$ and $\varepsilon$, such that
$$
   | (N (u_{\varepsilon} - u_{\delta}), u_{\varepsilon}) |
 \leq
   c\,
   \| u_{\varepsilon} - u_{\delta} \|_{L^2 (\cX,F^i)}
   \| u_{\varepsilon} - u_{\delta} \|_{H^1 (\cX,F^i)}
   \| u_{\varepsilon} \|_{L^\infty (\cX,F^i)}
$$
a.e. on $I$ for all $\delta$ and $\varepsilon$.
As the Dirichlet prolem for the Laplacian $\iD$ is elliptic, it follows that
$$
   \| u_{\varepsilon} - u_{\delta} \|_{H^1 (\cX,F^i)}^2
 \leq
   C\,
   \Big( \| \iD^{1/2} (u_{\varepsilon} - u_{\delta}) \|_{L^2 (\cX,F^i)}^2
       + \| u_{\varepsilon} - u_{\delta} \|_{L^2 (\cX,F^i)}^2 \Big)
$$
with $C$ a constant independent of $\delta$, $\varepsilon$ and $t \in I$.
Hence, on arguing as above we obtain
\begin{eqnarray*}
\lefteqn{
   \Big| \Re \int_0^t \left( N (u_{\varepsilon} - u_{\delta}), u_{\varepsilon} \right) dt'
   \Big|
}
\\
 & \leq &
   \frac{c}{2}
   \int_0^t
   \Big( \Big( \frac{1}{C'} \| u_\varepsilon - u_\delta \|_{H^1 (\cX,F^i)}^2 \Big)^2
       + \Big( C' \| u_\varepsilon \|_{L^\infty (\cX,F^i)} \| u_\varepsilon - u_\delta \|_H \Big)^2
   \Big)
   dt'
\\
 & \leq &
   \frac{\nu}{2}
   \int_0^t \| \iD^{1/2} (u_\varepsilon - u_\delta) \|_H^2 dt'
 + \int_0^t
   \Big( \frac{\nu}{2} + \frac{c}{2} \frac{C c}{\nu} \| u_\varepsilon \|_{L^\infty (\cX,F^i)}^2 \Big)
   \| u_\varepsilon - u_\delta \|_H^2
   dt',
\end{eqnarray*}
where
$
   \displaystyle
   (C')^2 = \frac{C c}{\nu}.
$

Substituting these estimates into (\ref{eq.coteieq}) yields
\begin{eqnarray*}
\lefteqn{
   \| u_\varepsilon (t) - u_\delta (t) \|_H^2
 + 2 \delta \int_0^t \| \iD^{m/2} (u_\varepsilon - u_\delta) \|_H^2 dt'
 + \nu \int_0^t \| \iD^{1/2} (u_\varepsilon - u_\delta) \|_H^2 dt'
}
\nonumber
\\
 & \leq &
   \frac{\varepsilon - \delta}{2}
   \int_0^t \| \iD^{m/2} u_\delta \|_H^2 dt'
 + \int_0^t
   \Big( \nu + c \frac{C c}{\nu} \| u_\varepsilon \|_{L^\infty (\cX,F^i)}^2 \Big)
   \| u_\varepsilon - u_\delta \|_H^2
   dt'
\end{eqnarray*}
for all $t \in I$.
We now apply the Gronwall inequality for continuous functions (\ref{eq.Gronwall}) to
obtain
$$
   \| u_\varepsilon (t) - u_\delta (t) \|_H^2
 \leq
   \frac{\varepsilon - \delta}{2}
   \int_0^t \| \iD^{m/2} u_\delta \|_H^2 dt'\,
   \exp
   \int_0^t \Big( \nu + c \frac{C c}{\nu} \| u_\varepsilon \|_{L^\infty (\cX,F^i)}^2 \Big)
   dt'
$$
for all $t \in I$.
By assumption, the family $u_\varepsilon$ is bounded in $L^2 (I,H^m (\cX,F^i))$ uniformly
in the parameter $\varepsilon$.
Therefore, there is a constant $B$ with the property that
\begin{eqnarray*}
   \int_0^t \| \iD^{m/2} u_\delta (t') \|_H^2 dt'
 & \leq &
   c' \int_0^T \| u_\delta (t') \|_{H^m (\cX,F^i)}^2 dt'
\\
 & \leq &
   B
\end{eqnarray*}
for all
   $t \in I$ and
   $\delta > 0$.
Furthermore, since $m > n/2$, we deduce from the Sobolev embedding theorem that
\begin{eqnarray*}
   \int_0^t \Big( \nu + c \frac{C c}{\nu} \| u_\varepsilon (t') \|_{L^\infty (\cX,F^i)}^2 \Big) dt'
 & \leq &
   \nu T + c' c \frac{C c}{\nu} \int_0^T \| u_\varepsilon (t') \|_{H^m (\cX,F^i)}^2 dt'
\\
 & \leq &
   A
\end{eqnarray*}
for all
   $t \in I$ and
   $\varepsilon > 0$,
where $A$ is a constant independent of $t$ and $\varepsilon$.
It follows that
\begin{eqnarray}
\label{eq.coteieqe}
   \sup_{t \in I} \| u_\varepsilon (t) - u_\delta (t) \|_H^2
 + \nu \int_0^T \| \iD^{1/2} (u_\varepsilon - u_\delta) \|_H^2 dt'
 & \leq &
   \frac{B}{2}\, \Big( 1 + A \exp A \Big)\,
   |\varepsilon - \delta|
\nonumber
\\
\end{eqnarray}
whenever
   $\delta, \varepsilon > 0$.

Estimate (\ref{eq.coteieqe}) shows that $u_{\varepsilon}$ is a uniformly continuous function
of parameter $\varepsilon > 0$ with values in both
   $L^\infty (I, L^2 (\cX,F^i))$ and
   $L^2 (I, H^1 (\cX,F^i))$.
Since these are Banach spaces, $u_\varepsilon$ converges in both the spaces to a section $u$
which lies in
   $L^\infty (I, L^2 (\cX,F^i))$ and
   $L^2 (I, H^1 (\cX,F^i))$,
as $\varepsilon \to 0$.
Moreover, the limit $u$ belongs to $L^2 (I,H^m (\cX,F^i))$, for
   the family $u_\varepsilon$ is bounded in $L^2 (I,H^m (\cX,F^i))$ uniformly in $\varepsilon$
and so
   it contains a subsequence $u_{\varepsilon_k}$ which converges in the weak topology of
   this space to $u$.

It remains to show that $u$ is a solution of the conventional Navier-Stokes equations with
initial data $u_0$ and zero Dirichlet data on the lateral boundary of $\cC$.
Since $\mathcal{S}$ lies dense in $S^1 \cap L^n (\cX,F^i)$, it suffices to prove the first
equality of (\ref{eq.conveNS}) for all $v \in \mathcal{S}$.

Let $v \in \mathcal{S}$.
Since $u_\varepsilon$ is a solution of the regularised Navier-Stokes equations, we get
\begin{eqnarray*}
\lefteqn{
   \Big( (u_\varepsilon (t), v) - (u_0, v) \Big)
 + \nu \int_0^t (\iD^{1/2} u_\varepsilon (t'), \iD^{1/2} v) dt'
}
\\
 & = &
   \int_0^t (f (t'), v) dt'
 - \varepsilon \int_0^t (\iD^{m/2} u_\varepsilon (t'), \iD^{m/2} v) dt'
 - \int_0^t (N (u_\varepsilon (t')), v) dt'
\end{eqnarray*}
for all $t \in I$.
The second summand on the right-hand side converges to zero, for the family $u_\varepsilon$
in bounded uniformly in $\varepsilon$ in the space $L^2 (I,H^m (\cX,F^i))$.
On the other hand,
$$
   \int_0^t (N (u_\varepsilon (t')), v) dt' \to \int_0^t (N (u (t')), v) dt'
$$
as $\varepsilon \to 0$, which is due to Lemma \ref{l.ptlfnlt}.
Hence, letting $\varepsilon \to 0$ in the above equality yields
\begin{equation}
\label{eq.yeati0t}
   \Big( \! (u (t), v) - (u_0, v) \! \Big)
 + \nu \! \int_0^t \! (\iD^{1/2} u (t'), \iD^{1/2} v) dt'
 + \! \int_0^t \! (N (u (t')), v) dt'
 = \! \int_0^t \! (f (t'), v) dt'
\end{equation}
for almost all $t \in I$.
Since the integrands in (\ref{eq.yeati0t}) are integrable functions of $t' \in I$, we may
differentiate this equality in $t$, thus achieving
$$
   (u' (t), v))
 + \nu\, (\iD^{1/2} u (t), \iD^{1/2} v)
 + (N (u (t)), v)
 = (f (t), v)
$$
for almost all $t \in I$ and all $v \in S^1$.
As $u \in L^\infty (I,H) \cap L^2 (I,S^1)$, we establish as in Section \ref{s.exiweak} that
$u$ coincides with a continuous function of $t \in [0,T]$ with values in $H$.
Hence, yet another consequence of (\ref{eq.yeati0t}) is that $u (0) = u_0$, and so $u$ is a
solution of (\ref{eq.conveNS}), as desired.
\end{proof}

As mentioned, problem (\ref{eq.NSweakf}) is a singular perturbation of the conventional
Navier-Stokes equations.
Hence, its solution need not converge to a solution of the conventional problem in any
strong topology even if this latter is unique.
For instance, the family
$
   u_\varepsilon (t) = (u_0 - 1) \exp (-t/\varepsilon) + 1
$
of solutions to the singularly perturbed initial problem
$$
\begin{array}{rclcl}
   \varepsilon u' + u
 & =
 & 1
 & \mbox{for $t \in I$},
\\
   u (0)
 & =
 & u_0
 &
\end{array}
$$
converges to the solution of the unperturbed equation $u = 1$ uniformly on $\overline{I}$
if and only if $u_0 = 1$.

\bigskip

\textit{Acknowledgments\,}
The authors are greatly indebted to Prof. A. Feldmeier for helpful comments concerning
the von Neumann-Richtmyer artificial viscosity in numerical simulations with second
order partial differential equations.
The first author gratefully acknowledges the financial support of
   the Russian Federation Government for scientific research under the supervision of
   leading scientist at the Siberian Federal University, contract N 14.Y26.31.0006,
and
   the Alexander von Humboldt Foundation.

\section*{Appendix}
\label{l.appendix}

The following lemma is needed to obtain the weak form of the singularly perturbed Navier-Stokes
equations satisfied by the weak solution.
This lemma is also used to describe the behaviour of the solution of the regularised equations
as $\varepsilon \to 0+$.

\bigskip

\noindent
\textbf{Lemma}
\textit{%
Suppose $m \geq (n+2)/4$.
Let $\{ u_k \}$ be a sequence of sections of $F^i$ over $\cC$ which converges
   in the weak* topology of $L^\infty (I,H)$,
   weakly in $L^2 (I,S)$ and
   strongly in $L^2 (I,H)$.
Then
\begin{eqnarray*}
   \lim_{k \to \infty} \int_0^T (u_k (t), v' (t))\, dt
 & = &
   \int_0^T (u (t), v' (t))\, dt,
\\
   \lim_{k \to \infty} \int_0^T (\iD^{m/2} u_k (t), \iD^{m/2} v (t))\, dt
 & = &
   \int_0^T (\iD^{m/2} u (t), \iD^{m/2} v (t))\, dt,
\\
   \lim_{k \to \infty} \int_0^T (N (u_k (t)), v (t))\, dt
 & = &
   \int_0^T (N (u (t)), v (t))\, dt
\end{eqnarray*}
for all sections $v \in C (\overline{I},S)$ satisfying $v' \in L^2 (I,H)$.
}

\bigskip

\begin{proof}
The first equality follows from the facts that
   $u_k \to u$ strongly in $L^2 (I,H)$ and
   $v' \in L^2 (I,H)$.

To prove the second equality we use the fact that $u_k \to u$ in the weak topology of
$L^2 (I,S)$.
That is,
$$
   \lim_{k \to \infty} \int_0^T (u_k - u, w) dt = 0
$$
for all $w \in L^2 (I,S')$.
On the other hand, the operator $\iD^m$ maps $S$ continuously into $S'$.
Hence it follows that
\begin{eqnarray*}
   \int_0^T (\iD^{m/2} u_k (t), \iD^{m/2} v (t))
 & = &
   \int_0^T (u_k (t), \iD^m v (t)) dt
\\
 & \to &
   \int_0^T (u (t), \iD^{m} v (t)) dt
\\
 & = &
   \int_0^T (\iD^{m/2} u (t), \iD^{m/2} v (t)) dt,
\end{eqnarray*}
as desired.

Finally, in order to show the last equality of the lemma we observe that the difference
$$
   \int_0^T (N (u_k (t)), v (t))\, dt - \int_0^T (N (u (t)), v (t))\, dt
$$
just amounts to
$$
   \int_0^T (N (u_k (t) - u (t), u_k (t)), v (t))\, dt
 + \int_0^T (N (u (t), u_k (t) - u (t)), v (t))\, dt.
$$
Consider the first integral and recall the estimate of the trilinear form given by
Lemma \ref{l.totnwst}.
Since $v \in C (\overline{I},S)$, we get
\begin{eqnarray*}
\lefteqn{
   \big| \int_0^T (N (u_k - u, u_k), v)\, dt \big|
}
\\
 & \leq &
   c
   \int_0^T
   \| u_k - u \|_H^{1/2} D (u_k - u)^{1/4} \| u_k \|_H^{1/2} D (u_k)^{1/4} D (v)^{1/2}
   dt
\\
 & \leq &
   c
   \Big( \int_0^T \| u_k - u \|_H D (u_k - u)^{1/2} dt \Big)^{1/2}
   \Big( \int_0^T \| u_k \|_H D (u_k)^{1/2} dt \Big)^{1/2}
\\
 & \leq &
   c
   \Big( \| u_k - u \|_{L^2 (I,H)} \| u_k - u \|_{L^2 (I,S)} \Big)^{1/2}
   \Big( \| u_k \|_{L^2 (I,H)} \| u_k \|_{L^2 (I,S)} \Big)^{1/2},
\end{eqnarray*}
where $c$ stands for a constant independent of $k$ and different in diverse applications.
Since
   $u_k \in L^2 (I,S)$ and
   $u_k \to u$ in the norm of $L^2 (I,H)$,
the right-hand side converges to zero, as $k \to \infty$.
Convergence arguments for the second integral are similar.
\end{proof}

\newpage

\end{document}